\providecommand{\U}[1]{\protect\rule{.1in}{.1in}}
\numberwithin{equation}{section}
\providecommand{\U}[1]{\protect\rule{.1in}{.1in}}
\newtheorem{theorem} {Theorem} [section]
\newtheorem{proposition}[theorem]{Proposition}
\newtheorem{corollary}  [theorem]     {Corollary}
\newtheorem{lemma}  [theorem]     {Lemma}
\newtheorem{example}  [theorem]     {Example}
\newtheorem{remark}  [theorem]     {Remark}
\newtheorem{definition}  [theorem]     {Definition}
\newcommand{\btheorem}{\begin{theorem}}
\newcommand{\etheorem}{\end{theorem}}
\newcommand{\bproposition}{\begin{proposition}}
\newcommand{\eproposition}{\end{proposition}}
\newcommand{\bdefinition}{\begin{definition}}
\newcommand{\edefinition}{\end{definition}}
\newcommand{\bcorollary}{\begin{corollary}}
\newcommand{\ecorollary}{\end{corollary}}
\newcommand{\bproof}{\begin{proof}}
\newcommand{\eproof}{\end{proof}}
\newcommand{\beq}{\begin{equation}}
\newcommand{\eeq}{\end{equation}}
\newcommand{\ee}{\end{eqnarray*}}
\newcommand{\be}{\begin{eqnarray*}}
\newcommand{\elemma}{\end{lemma}}
\newcommand{\blemma}{\begin{lemma}}
\newcommand{\bd}{\begin{enumerate} }
\newcommand{\ed}{\end{enumerate}}
\begin{document}

\title[]{Locally extensions of $\overline\partial$-closed forms on compact generalized Hermitian manifolds}

\author[Kang Wei]{Kang Wei}

\begin{abstract}
In this paper,  we first get a criterion formula for whether a differential form is holomorphic with respect to the generalized complex structure induced by $\epsilon$. Next, we get the  local extensions of $\overline\partial$-closed forms on a smooth family of compact generalized Hermitian manifolds by using this criterion. Finally, as an application, we use this extension to get the invariance of the generalized Hodge number of the deformations of compact
generalized Hermitian manifolds with $\partial\overline\partial$-lemma holds.
\\
\noindent{\bf Keywords:} Deformations of complex structures, Hodge theory,
Hermitian and K\"ahlerian manifolds.
\end{abstract}

\renewcommand{\thefootnote}{\fnsymbol{footnote}}
\footnotetext{{\it Mathematics Subject Classification.} 2010 Primary 32G05; Secondary 58A14, 53C55, 14J32.}

\maketitle

\section{Introduction}

The generalized complex geometry was introduced by N. Hitchin and developed by M.~Gualtieri, G. R.~Cavalcanti and many others in \cite{Hi, Gu1, Gu2}. It is a generalization of both complex geometry and symplectic geometry.
This new geometry provides an indeed broad platform for the people working in both mathematics and physics.
The concept of $H$-twisted was introduced by P. $\check{S}$evera and A. Weinstein in \cite{Se}.
The theory of deformations of complex structures can be dated back to Riemann, and extensively
studied by K. Kodaira, D. C. Spencer, N. Nirenberg, M. Kuranishi and many other great mathematicians in \cite{Ko1,Ko2,Ko3}.
The deformation theory of generalized complex geometry is first studied by M. Gualtieri, R. Goto, Yi Li and so on by using Kodaira-Spencer-Kuranishi's method in \cite{Gu1,Go,Li}.

In this paper, we extend the results of  local extensions of $\overline\partial$-closed forms on a smooth family of compact  Hermitian manifolds in \cite{RaoWanZhao03} to those in generalized case.
 To be precise, if $X$ is a compact generalized complex manifold  with generalized complex structure $J$, we have the $+i$-eigenvalue subspace $L$ in $T_X\oplus T_X^{\ast}$ with respect to the generalized complex structure $J$. We denote $L^{\ast}$ as its duality space with respect to the pair which is defined in Definition \ref{pair} below  and we can identify $L^{\ast} $ with $\overline{L}$ since $<L,L>=0$.
We also know that $\wedge^{\bullet}T_X^{\ast}$ has a Clifford's decomposition
\begin{eqnarray*}
\wedge ^{\bullet}T_{X}^{\ast}&=&U^{-n}(X)\oplus U^{-n+1}(X) \oplus...\oplus U^{n}(X),\end{eqnarray*}
where \begin{eqnarray*}
U^{-n}(X)&:=& \{ \rho \in \wedge ^{\bullet}T_{X}^{\ast}\mid L\cdot \rho =0\},  \\
U^k(X)&:=& \wedge ^{k+n} L^{\ast}\cdot U^{-n}(X)~~  (n=dim_{\mathbb{C}}X),
\end{eqnarray*}
which is defined in Definition \ref{caed2} below.

Let $\pi: \mathcal{X} \to \Delta \subset \mathbb{C}^1 $ be a smooth  family of generalized complex manifolds where $\pi^{-1}(0):=X_0=X$,
$t$ be the local coordinate in the unit disc $\Delta$,
$X_{\epsilon}$ be compact generalized Hermitian manifold with generalized complex structure $J_{\epsilon}$ induced by the generalized Beltrami differentials $\epsilon:=\epsilon(t)$.
We first introduce an isomorphism:

\begin{eqnarray*}
\mathcal{E}:  U^{k-n}(X)    &\to&    U^{k-n}(X_{\epsilon}) ,\\
\sigma  &\mapsto & \mathcal{E}(\sigma)
\end{eqnarray*}
where
\begin{eqnarray*}
\sigma &:=&\frac{1}{k!}\sigma_{i_1\cdots i_k}l^{i_1}\cdot \cdots \cdot l^{i_k}\cdot \rho_0, ~~l^{i_{\alpha}}\in L^{\ast},\rho_0\in U^{-n}(X),\\
 \mathcal{E}(\sigma)&:=& \frac{1}{k!}\sigma_{i_1\cdots i_k} (1+\epsilon^{\ast})(l^{i_1})\cdot \cdots \cdot (1+\epsilon^{\ast})(l^{i_k})\cdot (e^{\epsilon}\cdot \rho_0),\\
  e^{\epsilon}\rho_0&:=& \sum_{i\ge 0} \frac{1}{i!}\epsilon^i \cdot \rho_0.
\end{eqnarray*}

Then we get a criterion formula for whether a differential form is holomorphic with respect to the generalized complex structure $J_{\epsilon}$ induced by $\epsilon$. That is,
\begin{theorem}(=Theorem \ref{k-cri})
Assume that $||\epsilon||_{L^{\infty}}<1$. For any $\sigma \in U^{k-n}(X) \subset \wedge^{\bullet}T_X^{\ast}(0\le k \le 2n),$
\begin{eqnarray*}
\overline\partial_t(\mathcal{E} (\sigma))=0
\Leftrightarrow ([\partial, \epsilon\cdot]+\overline\partial)
\circ (1 - \epsilon\epsilon^{\ast}) ( \sigma)=0,
\end{eqnarray*}
where $\overline\partial_t$ is the $\overline\partial$-operator on $X_{\epsilon}:=X_{\epsilon(t)}$.
\end{theorem}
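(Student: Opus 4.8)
The plan is to reduce the claimed equivalence to a single operator identity on $U^{k-n}(X)$. Since $d$ is the (structure-independent) de Rham differential on $\wedge^\bullet T_X^\ast$ and the only thing that changes under the deformation is its decomposition with respect to the grading $U^\bullet(X_\epsilon)$, I would first write $\overline\partial_t(\mathcal{E}(\sigma))$ as the projection $\Pi^\epsilon_{k-n+1}\,d(\mathcal{E}(\sigma))$ of $d(\mathcal{E}(\sigma))$ onto the component $U^{k-n+1}(X_\epsilon)$ (recall $\overline\partial_t$ raises the $U$-degree by one). The strategy is then to show that this projection equals $\mathcal{E}$, or an injective operator built from the deformed frame, applied to $([\partial,\epsilon\cdot]+\overline\partial)\circ(1-\epsilon\epsilon^\ast)(\sigma)$; once that intertwining is established, the equivalence follows at once from the injectivity of $\mathcal{E}$, which is guaranteed by the hypothesis $\|\epsilon\|_{L^{\infty}}<1$. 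That hypothesis is exactly what makes $1+\epsilon^\ast$, hence $\mathcal{E}$, invertible and what forces the convergence of the exponential Clifford factor $e^{\epsilon}$.

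The central computation is to apply $d$ to the explicit expression
\[
\mathcal{E}(\sigma)=\frac{1}{k!}\,\sigma_{i_1\cdots i_k}\,(1+\epsilon^\ast)(l^{i_1})\cdot\cdots\cdot(1+\epsilon^\ast)(l^{i_k})\cdot(e^{\epsilon}\cdot\rho_0),
\]
and to sort each resulting term by its $U^\bullet(X_\epsilon)$-degree. I would use the Leibniz rule for $d$ over Clifford multiplication, splitting $d(\mathcal{E}(\sigma))$ into terms where $d$ hits one of the twisted covectors $(1+\epsilon^\ast)(l^{i_\alpha})$ and the single term where $d$ hits the deformed pure spinor $e^{\epsilon}\cdot\rho_0$. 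For the latter I would use that $e^{\epsilon}\cdot\rho_0$ generates the pure spinor line $U^{-n}(X_\epsilon)$, so that $d(e^{\epsilon}\cdot\rho_0)$ is computed through the Clifford action of $\epsilon$ together with the original $\partial,\overline\partial$ on $\rho_0$. This is precisely where the commutator $[\partial,\epsilon\cdot]$ enters: commuting $d$ past the exponential factor $e^{\epsilon}$ produces $[\partial,\epsilon\cdot]+\overline\partial$ modulo contributions of the wrong $U$-degree, since $\epsilon\cdot$ raises the degree by two while $\partial$ lowers it by one, so that $[\partial,\epsilon\cdot]$ and $\overline\partial$ both land in $U^{k-n+1}$.

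The remaining task is to re-express the outcome in the frame of $X$. The covectors $(1+\epsilon^\ast)(l^{i_\alpha})$ span $L_\epsilon^\ast$ but are not yet the genuine dual frame: pairing $(1+\epsilon^\ast)(l^i)$ against the frame $(1+\epsilon)(l_j)$ of $L_\epsilon$, and using $\langle L,L\rangle=0$ together with the identification $L^\ast\cong\overline L$ from Definition \ref{pair}, the Gram matrix collapses to $\delta^i_j$ plus a term governed by $\epsilon\epsilon^\ast$. Thus the discrepancy between the $(1+\epsilon^\ast)$-twisted frame and the true dual frame of $L_\epsilon^\ast$ is measured exactly by $1-\epsilon\epsilon^\ast$, which is why the right-hand side carries the factor $(1-\epsilon\epsilon^\ast)$ acting on $\sigma$ before the differential operator. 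I would organize the bookkeeping so that every contribution of $U$-degree other than $k-n+1$ either vanishes under the projection $\Pi^\epsilon_{k-n+1}$ or cancels, leaving an identity of the form
\[
\overline\partial_t(\mathcal{E}(\sigma))=\mathcal{E}\Big(([\partial,\epsilon\cdot]+\overline\partial)\circ(1-\epsilon\epsilon^\ast)(\sigma)\Big),
\]
from which the stated equivalence is read off directly.

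I expect the main obstacle to be the purely algebraic bookkeeping of the Clifford multiplications: tracking how $d$ distributes over the product of $k$ twisted covectors and the exponential factor, and verifying that all terms of $U$-degree different from $k-n+1$ drop out after projection. In particular, isolating the term that assembles into $[\partial,\epsilon\cdot]+\overline\partial$, and confirming that the frame-change factor is precisely $1-\epsilon\epsilon^\ast$ rather than some higher-order correction, will require careful use of the pairing and of $\langle L,L\rangle=0$. The norm bound $\|\epsilon\|_{L^{\infty}}<1$ then enters twice, both to control the relevant inverse frame change and to make $\mathcal{E}$ the isomorphism through which the closedness of $\mathcal{E}(\sigma)$ on $X_\epsilon$ is transferred to a condition on $X$.
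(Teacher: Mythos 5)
Your proposal is correct and follows essentially the same route as the paper, which conjugates $d$ by $e^{\epsilon}$ (Lemma \ref{holcri}) to produce $d+[\partial,\epsilon\cdot]$, computes the frame-change operators $e^{-\epsilon}\cdot\mathcal{E}=1+\epsilon^{\ast}-\epsilon\epsilon^{\ast}$ and $\mathcal{E}^{-1}\circ e^{\epsilon}=-\epsilon^{\ast}(1-\epsilon\epsilon^{\ast})^{-1}+(1-\epsilon\epsilon^{\ast})^{-1}$ (Propositions \ref{expansion01} and \ref{expansion02}), and then projects onto the degree-$(k-n+1)$ component exactly as you describe. The one correction is that the resulting intertwining identity is $\overline\partial_t(\mathcal{E}(\sigma))=\mathcal{E}\circ(1-\epsilon\epsilon^{\ast})^{-1}\circ(\overline\partial+[\partial,\epsilon\cdot])\circ(1-\epsilon\epsilon^{\ast})(\sigma)$, carrying an extra degree-preserving factor $(1-\epsilon\epsilon^{\ast})^{-1}$ that your displayed formula omits; since that factor is invertible under $||\epsilon||_{L^{\infty}}<1$, the equivalence still follows, as your hedge (``or an injective operator built from the deformed frame'') already anticipates.
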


Next, under the assumption that $X$ satisfies some kind $\partial\overline\partial$-lemma, we get the following local extensions of $\overline\partial$-closed forms on a smooth family $\pi: \mathcal{X} \to \Delta \subset \mathbb{C}^1 $ of compact generalized Hermitian manifolds by using this criterion. The method we used here is parallel to that in \cite{RaoWanZhao03}  which originally came from \cite{Liu02} and developed in \cite{To, Ti, Cl, Ra, Sun01, Sun02, Zh, RaoZhao04, RaoWanZhao02, RaoWanZhao01}.

\begin{theorem}(=Theorem \ref{construction})\label{construction01}
Let $(X,G)$ be a compact Hermitian generalized complex manifold. Assume that $X\in \mathbb{B}^{k-1} \cap \mathbb{S}^{k+1}.$ Then for any $\sigma_{00} \in H_{\overline\partial }^k (X) ~(-n\le k \le n)$,  we can choose

 \begin{eqnarray*}
\sigma_t = \sigma _{00}+\sum_{i,j\ge 1} t^i \bar{t}^j \sigma_{ij} \in U^k(X),
\end{eqnarray*}
such that $\mathcal{E}(\sigma_t) \in U^k(X_{\epsilon})$ and $\overline\partial_t \circ \mathcal{E}(\sigma_t)=0$ with $|t|$ small.

\end{theorem}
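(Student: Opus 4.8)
The plan is to construct the extension $\sigma_t$ by a power-series (iteration) argument, treating the criterion formula from Theorem~\ref{k-cri} as the equation the coefficients must satisfy order by order in $t$ and $\bar t$. Let me think about what the criterion says and how the $\partial\bar\partial$-type hypotheses enter.

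The criterion says $\bar\partial_t(\mathcal{E}(\sigma_t))=0$ is equivalent to $([\partial,\epsilon\cdot]+\bar\partial)\circ(1-\epsilon\epsilon^*)(\sigma_t)=0$. So I need to solve, in the fixed manifold $X$, an equation of the form $(\bar\partial + [\partial,\epsilon\cdot])\eta_t = 0$ where $\eta_t = (1-\epsilon\epsilon^*)\sigma_t$. The Beltrami differential $\epsilon = \epsilon(t)$ is itself a power series in $t$, so expanding everything in powers of $t^i\bar t^j$ gives a recursive system. At zeroth order I have $\bar\partial\sigma_{00}=0$, which holds since $\sigma_{00}\in H^k_{\bar\partial}(X)$. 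At each higher order $(i,j)$, the equation has the schematic form $\bar\partial\sigma_{ij} = -(\text{known lower-order terms involving }\epsilon,\partial,\bar\partial)$, and I must solve for $\sigma_{ij}$.

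So the heart is an inductive solvability statement: at each order the right-hand side is a known form built from previously constructed $\sigma_{i'j'}$ and the coefficients of $\epsilon$, and I need it to be $\bar\partial$-exact (or at least $\bar\partial$-closed and then corrected) so that $\sigma_{ij}$ exists. This is exactly where the hypotheses $X\in\mathbb{B}^{k-1}\cap\mathbb{S}^{k+1}$ must come in — these should be the generalized analogues of the $\partial\bar\partial$-lemma in the relevant bidegrees, guaranteeing that a form which is $\bar\partial$-closed and in the image of $\partial$ (or of the mixed operator) is $\bar\partial$-exact. My plan would be: (i) write $\epsilon=\sum\epsilon_i t^i$ and expand $(\bar\partial+[\partial,\epsilon\cdot])(1-\epsilon\epsilon^*)\sigma_t=0$ into its $t^i\bar t^j$ components; (ii) check the base case; (iii) at step $(i,j)$, verify the right-hand side $R_{ij}$ is $\bar\partial$-closed using the integrability of the generalized complex structure (the Maurer--Cartan equation for $\epsilon$) together with the induction hypothesis; (iv) invoke the $\mathbb{B}/\mathbb{S}$ hypotheses to conclude $R_{ij}$ lies in the image of $\bar\partial$, hence define $\sigma_{ij}$, choosing the canonical representative via the Green's operator / $\bar\partial^*$ to control convergence.

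The main obstacle, as usual in these Kodaira--Spencer-style constructions, will be the convergence of the resulting power series: after solving the formal recursion I must show $\sum t^i\bar t^j\sigma_{ij}$ converges for $|t|$ small in a suitable Sobolev or Hölder norm. This requires \emph{a priori} elliptic estimates — controlling $\|\sigma_{ij}\|$ by the solution operator $\bar\partial^* \mathbb{G}$ applied to $R_{ij}$, then bounding $\|R_{ij}\|$ in terms of the lower-order norms and the (convergent) series for $\epsilon$, and finally running a majorant-series argument to dominate everything by a convergent geometric-type series. A secondary technical point is to verify that the closedness check in step (iii) genuinely closes the induction, i.e.\ that the compatibility needed to apply the $\partial\bar\partial$-lemma is itself a consequence of the lower-order equations and not an extra assumption; I expect this to follow from the generalized Maurer--Cartan/integrability equation satisfied by $\epsilon$, but it is the step requiring the most care. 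Throughout, the isomorphism $\mathcal{E}$ and the criterion of Theorem~\ref{k-cri} let me do all the analysis on the fixed central fiber $X$, which is what makes the elliptic estimates uniform in $t$.
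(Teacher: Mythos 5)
Your overall strategy coincides with the paper's: reduce via Theorem \ref{k-cri} to the equation $(\overline\partial+[\partial,\epsilon\cdot])\tilde\sigma_t=0$ with $\tilde\sigma_t=(1-\epsilon\epsilon^{\ast})(\sigma_t)$ on the central fiber, solve order by order in $t^p\bar t^q$, and establish convergence by a majorant series $A(|t|)$ together with elliptic (H\"older) estimates. However, the step you yourself flag as ``requiring the most care'' is exactly where your plan, as stated, does not go through, and the missing device is concrete. At order $(p,q)$ the equation reads
$$\overline\partial\tilde\sigma_{pq}=-\sum\partial(\epsilon_{ik}\cdot\tilde\sigma_{jl})+\sum\epsilon_{ik}\cdot\partial\tilde\sigma_{jl},$$
and the hypothesis $X\in\mathbb{S}^{k+1}$ is not a general ``$\overline\partial$-closed implies $\overline\partial$-exact'' statement: it only provides solutions of $\overline\partial\sigma=\partial\varphi$ when the right-hand side is literally of the form $\partial\varphi$ with $\overline\partial\partial\varphi=0$. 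To put $R_{pq}$ into that form you must kill the second sum, i.e.\ you need $\partial\tilde\sigma_{jl}=0$ for all lower orders; and the same condition is what makes the compatibility check $\overline\partial\partial\bigl(\sum\epsilon_{ik}\cdot\tilde\sigma_{jl}\bigr)=0$ close (the paper's computation uses integrability of $\epsilon$, Lemma \ref{braket}, \emph{and} $\partial\tilde\sigma_{jl}=0$). So the induction hypothesis must be strengthened: the paper replaces the single equation by the system $\partial\tilde\sigma_{pq}=0$ together with $\overline\partial\tilde\sigma_{pq}=-\sum\partial(\epsilon_{ik}\cdot\tilde\sigma_{jl})$, which is sufficient but not equivalent.

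Enforcing the extra condition $\partial\tilde\sigma_{pq}=0$ is itself nontrivial and is where $\mathbb{B}^{k-1}$ enters twice, neither occurrence appearing in your outline. First, at order zero one must replace the harmonic representative $\sigma_{00}$ by a $d$-closed representative $\tilde\sigma_{00}=\sigma_{00}+\overline\partial\beta$ (Proposition \ref{d-closed representation}), which requires solving $\partial\overline\partial\beta=-\partial\sigma_{00}$. Second, at each order one takes the particular solution $\tilde\sigma_{pq}^1=-\overline\partial^{\ast}G_{\overline\partial}\partial(\sum\epsilon_{ik}\cdot\tilde\sigma_{jl})$ and then corrects it by a $\overline\partial$-exact term $\overline\partial\tilde\sigma_{pq}^2$ with $\tilde\sigma_{pq}^2=-(\partial\overline\partial)^{\ast}G_{BC}\partial\tilde\sigma_{pq}^1$, solving $\partial\overline\partial\tilde\sigma_{pq}^2=-\partial\tilde\sigma_{pq}^1$ via the minimal-norm solution of the $\partial\overline\partial$-equation (Proposition \ref{eqn02} and the Bott--Chern Laplacian machinery); this correction leaves $\overline\partial\tilde\sigma_{pq}$ unchanged while achieving $\partial\tilde\sigma_{pq}=0$. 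Without this two-step correction and the strengthened induction hypothesis, your step (iii) cannot be verified and the recursion does not close. The remainder of your plan (majorant series, elliptic estimates, solution operators $\overline\partial^{\ast}G_{\overline\partial}$ and $G_{BC}$) matches the paper's Step 2.
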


Finally, as an application, we use the above local extension formula   to get the invariance of the generalized Hodge number of the deformations of compact
generalized Hermitian manifolds with $\partial\overline\partial$-lemma holds. That is,

\begin{corollary}(=Corollary \ref{i})
Let $(X,G)$ be a compact generalized Hermitian manifold which satisfies $\partial\overline\partial$-lemma, then $h_{\overline\partial_t}^k(X_t)$ is independent of $t$, where $-n\le k \le n$,  $X_t$ is the generalized Hermitian manifold with generalized complex structure $J_{\epsilon}$ induced by $ \epsilon:=\epsilon(t)$.
\end{corollary}

\thanks {\textbf{Acknowledgements:} The author would like to thank Professors Kefeng Liu,   Sheng Rao,  Doctors Jie Tu.}

\section{Preliminaries on compact generalized Hermitian manifolds}

In this section, we review some basic definitions of compact $H$-twisted generalized Hermitian manifolds. We refer the reader to \cite{Hi,Gu1, Gu2, Ba, FG} for details. We also give some propositions which will be used in this paper.

\subsection{Basic knowledges}
We first define a pair on smooth manifold $X$ which is a generalization of the pair $<\frac{\partial}{\partial z^i}, dz^j>=\delta^j_i$.
\begin{definition}\label{pair}
Let $M^{2n}$ be a smooth manifold with $dim_{\mathbb{R}}X=2n$. We define the following pair on $T_X \oplus T_X^{\ast}$:

\begin{eqnarray*}\label{inner product}
<a,b>:=(\xi(Y)+\eta(X)),
\end{eqnarray*}
where $a:=X+\xi, b:=Y+\eta \in T_X \oplus T_X^{\ast},~X, Y \in T_X ,~\xi, \eta \in T_X^{\ast} $.
\end{definition}
\begin{remark}
Comparing with the definition on Page 5 in \cite{Gu1}, we omit the coefficient $\frac{1}{2}$ for convenience.
And since $<a,b>=<b,a>$, we also denote $<a,b>$ by $a(b)$.

\end{remark}

Next, we  introduce the generalized complex structures  on $X$.
\begin{definition}\label{gcssb}
Let $X^{2n}$ be a smooth manifold with $dim_{\mathbb{R}}X=2n$. If there
exists an endomorphism $J$ on $T_X \oplus
T_X^{\ast}$ which satisfies

\begin{displaymath}
J^2=-1,
\end{displaymath}
and
\begin{displaymath}
<a,b>=<Ja,Jb>,~~\mbox{where} ~~a,b \in  T_X \oplus T_X^{\ast},
\end{displaymath}

$J$ is called a generalized almost complex structure on $X$.
\end{definition}

Since $J^2=-1$, we may decompose $T_X \oplus T_X^{\ast}$ into the $\pm i$ -eigenvalue subspaces of $J$:
\begin{displaymath}
T_X \oplus T_X^{\ast}=L+\overline{L},
\end{displaymath}

where $L$  be the $+i$-eigenvalue subspace of $T_X \oplus T_X^{\ast}$.
We define $H$-twisted Courant bracket for some $H \in H^3(M,\mathbb{R})$ as follow:
Let $a=X+\xi ,b=Y+\eta \in T_X \oplus T_X^{\ast}$, where $X,Y \in T_X  ,~\xi, \eta \in T_X^{\ast}$.

\begin{eqnarray*}
[a,b]_H&:=&[X+\xi, Y+\eta]_H   \\
&:=&[X, Y]+L_X\eta -L_Y\xi -\frac{1}{2}d(i_X \eta-i_Y \xi)+i_Y i_X H,
\end{eqnarray*}
where
$
[X,Y]:= XY- YX , L_X\eta :=d \circ i_X \eta+i_X \circ d\eta,~ i_X$ is the contraction by the vector field $X$.
If $J$ is called a generalized almost complex structure on $X$ and $[L,L]_H \subset L, $ we say $J$ is integrable  and call it a generalized complex structure on $X$.

We now introduce Clifford actions on $\wedge ^{\bullet}T_{X}^{\ast}$.
\begin{definition}\label{caed}
Let $\sigma \in \wedge ^{\bullet}T_{X}^{\ast}$ and $a:=X+\xi \in T_X\oplus T_X^{\ast}$, where $X\in T_X,~\xi \in T_X^{\ast}$,
we  define Clifford actions on $\wedge ^{\bullet}T_{X}^{\ast}$:
\begin{displaymath}
a\cdot \sigma:=(X+\xi)\cdot \sigma:=i_X \sigma +\xi \wedge \sigma.
\end{displaymath}

By direct computation, we know that
\begin{eqnarray}
a\cdot b \cdot \sigma +b\cdot a \cdot \sigma &=&a(b) \sigma, \label{wedge}
\end{eqnarray}
where $a,b \in T_X \oplus T_X^{\ast}$.
\end{definition}
If $J$ is integrable, we have that $\overline{L}(\overline{L})=0$ and thus
\begin{eqnarray*}
a\cdot b \cdot \sigma +b\cdot a \cdot \sigma &=&0.
\end{eqnarray*}
Then we get an exterior algebra which denote as $\wedge^{\bullet}\overline{L}$. More generally, we  have that

\begin{eqnarray*}
a\cdot b \cdot \sigma &=& (-1)^{pq}b\cdot a \cdot\sigma,
\end{eqnarray*}
where $a \in \wedge ^{p}\overline{L},~b \in \wedge ^{q}\overline{L}.$
Moreover, since $<L;L>=0$, we can identify $\overline{L}$ with the duality space $L^{\ast}$ of $L$ with respect to the pair $<\cdot ,\cdot >$ in Definition \ref{pair}.
And Courant bracket can be extended into a Schouten bracket on $\wedge
^{\bullet}L^{\ast}=\wedge^{\bullet}\overline{L}$ which we still denote by
$[\cdot,\cdot]_H$ as follows:

\begin{eqnarray*}
[a,b]_H:=\sum_{i,j}[a_i, b_j]_H \cdot a_1\cdot \cdots \cdot \hat{a_i}\cdot \cdots \cdot a_p \cdot b_1\cdot \cdots \cdot \hat{b_j}\cdot \cdots \cdot b_q,
\end{eqnarray*}
where $a=a_1\cdot \cdots \cdot a_p \in \wedge ^{p}L ^{\ast},~b=b_1\cdot \cdots \cdot b_q \in \wedge ^{q}L ^{\ast},~\hat{a_i}$ means omit $a_i$.

Using the Clifford actions, we now introduce the following Clifford's decomposition of $\wedge^{\bullet}T_M^{\ast}$.
\begin{definition}(\cite{Ba}, Page 13)\label{caed2}
Let $(M, J)$ be a  generalized complex manifold  where $J$ is its generalized complex structure. Then we can define

\begin{eqnarray*}
U^{-n}(X)&:=& \{ \rho \in \wedge ^{\bullet}T_{X}^{\ast}\mid L\cdot \rho =0\},  \\
U^k(X)&:=& \wedge ^{k+n} L^{\ast}\cdot U^{-n}(X).
\end{eqnarray*}
\end{definition}

And we know that $U^k(X)$ is the $(-k)i$-eigenvalue subspace of $J$ and $U^{-n}(X)$ is a line bundle which we call it the canonical bundle of $J$.
Thus we can get the Clifford's decomposition of $\wedge ^{\bullet}T_{X}^{\ast}$:
\begin{eqnarray*}
\wedge ^{\bullet}T_{X}^{\ast}=U^{-n}(X)\oplus U^{-n+1}(X) \oplus...\oplus U^{n}(X),
\end{eqnarray*}
where $n=dim_{\mathbb{C}}X$.

We now introduce the twisted de Rham differential $d_H$ for some $H \in H^3(X,\mathbb{R})$ on $\wedge ^{\bullet}T_{X}^{\ast}$ as follows:
\begin{definition}\label{tdd}

\begin{eqnarray*}
d_H: \wedge ^{\bullet}T_{X}^{\ast} &\to& \wedge ^{\bullet}T_{X}^{\ast},\\
\sigma &\mapsto& d\sigma-H\wedge \sigma.
\end{eqnarray*}

We also introduce the twisted Dolbeault operator $\partial_H$ and $\overline\partial_H$ by
\begin{eqnarray*}
\partial_H:=\pi_{k-1}\circ d_H :U^k(X) &\to & U^{k-1}(X), \\
\overline\partial_H:=\pi_{k+1}\circ d_H :U^k(X) &\to & U^{k+1}(X),
\end{eqnarray*}
where $\pi_k$ is the projection onto $U^{k}(X)$.
\end{definition}

We know that $J$ is integrable if and only
if~$d_H=\partial_H+\overline\partial_H$(Page 51 in \cite{Gu1}).

We now introduce the complex $(\wedge ^{\bullet}L ^{\ast},d_L)$ and show its relationship with the complex $(\wedge ^{\bullet}T_{X}^{\ast},\overline\partial_H)$.
\begin{definition}\label{liederivation}
We define the Lie derivation $d_L$ as follows:
\begin{eqnarray*}
d_L: \wedge^k L^{\ast} &\to& \wedge^{k+1} L^{\ast},\\
a &\mapsto &d_L a
\end{eqnarray*}

where
\begin{eqnarray*}
d_L a (x_0, ...x_k)&:=&   \sum_i (-1)^i p(x_i)a (x_0,...,\hat{x_i},...,x_k) \\
&  &+\sum_{i<j}(-1)^{i+j}a ([x_i,x_j]_H,x_0,...,\hat{x_i},...,\hat{x_j},...,x_k),
\end{eqnarray*}
where $a \in \wedge^k L^{\ast}, x_i \in L(0\le i\le k), p:L \to T_X$ is the projection which is called the anchor.
And we  have the following relationship:
\begin{eqnarray*}
\overline\partial_H( a \cdot \rho )= d_L (a) \cdot \rho +(-1)^k a \cdot \overline\partial_H \rho,
\end{eqnarray*}
where $a \in \wedge^k L^{\ast} , \rho \in \wedge^{\bullet} T_X^{\ast},$
\end{definition}

\begin{remark}We now give a discussion about the order of operators $d_H, \partial_H, \overline\partial_H$.
Since $d_H \sigma:=d\sigma -H\wedge\sigma$ , we know that $d_H$ is an operator of order 1. By the definition of $d_L$ above, we know that
$d_L$ is an operator of order 1. For any $\alpha :=a
\cdot \rho \in U^k(X),$ where $a\in \wedge^{k+n} L^{\ast}, \rho\in U^{-n}(X)$, we have that $\partial_H \rho=0$ and thus
$\overline\partial_H( a \cdot \rho )= d_L (a) \cdot \rho +(-1)^k a \cdot \overline\partial_H \rho=d_L (a) \cdot \rho +(-1)^k a \cdot d_H \rho$. So $\overline\partial_H$ is an operator of order 1.  $\partial_H :=d_H-\overline\partial_H$ is also an operator of order 1.
\end{remark}

From now on, we simply denote $d_H, \partial_H, \overline\partial_H, [\cdot , \cdot ]_H$ as $d, \partial, \overline\partial, [\cdot,\cdot]$ respectively.

We list the following lemma which will be used later:
\begin{lemma}[Lemma 2 in Page 8, \cite{Li}, or \cite{Ra2}]\label{braket}
For any $a,b \in \wedge^{2}L^{\ast}, \sigma \in \wedge ^{\bullet}T^{\ast}_{X}$, we have that ~$$[a,b]\cdot \sigma= a\cdot d(b\cdot \sigma)+b\cdot d(a\cdot \sigma)
-a\cdot b \cdot d\sigma -d(a\cdot b \cdot \sigma),$$ where $\cdot$ is denoted as the Clifford action.

\end{lemma}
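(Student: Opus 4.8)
The plan is to rewrite the right-hand side as a double commutator of operators and to read the identity as the statement that the Schouten bracket is the \emph{derived bracket} of the twisted differential $d=d_H$ acting through the Clifford representation of Definition~\ref{caed}. Write $C_c$ for the operator $\sigma\mapsto c\cdot\sigma$ of Clifford multiplication (I avoid the symbol $L$, already used for the eigenbundle and for the Lie derivative). The right-hand side of the lemma equals
\begin{eqnarray*}
a\cdot d(b\cdot\sigma)+b\cdot d(a\cdot\sigma)-a\cdot b\cdot d\sigma-d(a\cdot b\cdot\sigma)&=&\big(C_a\,d\,C_b+C_b\,d\,C_a-C_aC_b\,d-d\,C_aC_b\big)\sigma .
\end{eqnarray*}
Since $a,b\in\wedge^2 L^{\ast}$ are even, the sign rule $a\cdot b\cdot\sigma=(-1)^{2\cdot2}b\cdot a\cdot\sigma$ gives $C_aC_b=C_bC_a$; inserting this in the last term ($d\,C_aC_b=d\,C_bC_a$) and regrouping collapses the bracketed operator to $C_a[d,C_b]-[d,C_b]C_a=[C_a,[d,C_b]]$. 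Thus the lemma is equivalent to the operator identity
\begin{eqnarray*}
[a,b]\cdot&=&[C_a,[d,C_b]].
\end{eqnarray*}

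The only genuinely computational step is the degree-one case $p,q\in L^{\ast}=\overline L$. Writing $p=P+\pi$, $q=Q+\theta$ with $P,Q\in T_X$ and $\pi,\theta\in T_X^{\ast}$, the Clifford action is $C_p=i_P+\pi\wedge$, and with $d\sigma=d_{\mathrm{dR}}\sigma-H\wedge\sigma$ the Cartan formula $L_P=i_Pd_{\mathrm{dR}}+d_{\mathrm{dR}}i_P$ lets me expand the double commutator explicitly. Collecting terms, the untwisted part reproduces $[P,Q]+L_P\theta-L_Q\pi-\tfrac12 d(i_P\theta-i_Q\pi)$, while the $-H\wedge$ part contributes exactly $i_Qi_P H$; matching with the $H$-twisted Courant bracket $[\cdot,\cdot]_H$ of Section~2 yields
\begin{eqnarray*}
[p,q]_H\cdot\sigma&=&d(p\cdot q\cdot\sigma)+p\cdot d(q\cdot\sigma)-q\cdot d(p\cdot\sigma)+p\cdot q\cdot d\sigma .
\end{eqnarray*}
This is the odd-degree instance of the same derived-bracket formula; the relative signs differ from the degree-two case only because $C_p,C_q$ are now odd, and the antisymmetry of the right-hand side in $p,q$ is automatic and matches that of the Courant bracket.

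It remains to pass from $\wedge^1L^{\ast}$ to $\wedge^2L^{\ast}$, which is purely formal. Every element of $\wedge^2L^{\ast}$ is a sum of Clifford products $p\cdot q$ with $p,q\in\overline L$ (Clifford and exterior products coincide here because $\overline L(\overline L)=0$), so it suffices to treat $a=p\cdot q$ and $b=r\cdot s$. On the algebraic side the Schouten (Gerstenhaber) extension of $[\cdot,\cdot]_H$ defined in Section~2 expands $[a,b]$ into the four Courant brackets $[p,r]_H,[p,s]_H,[q,r]_H,[q,s]_H$, each Clifford-multiplied by the two leftover factors with Koszul signs. On the operator side, $c\mapsto[d,C_c]$ is a graded derivation of the exterior product on $\wedge^\bullet\overline L$, so $[C_a,[d,C_b]]$ expands by the identical Leibniz pattern into degree-one double commutators dressed by the leftover factors; the degree-one identity above then matches the two expansions term by term. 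Equivalently, once $[c,c']\cdot=[C_c,[d,C_{c'}]]$ holds on generators, it propagates to $\wedge^2L^{\ast}$ by the graded Jacobi identity together with $[C_a,C_b]=0$, and this is the route I would prefer so as to avoid explicit index bookkeeping.

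The main obstacle is the degree-one computation, and within it the twist: one must check that the $-H\wedge$ part of $d$ assembles, inside the double commutator, into precisely the term $i_Qi_P H$ of the $H$-twisted Courant bracket, and that the Koszul signs in $[d,C_p]$ and in its commutator with $C_q$ are tracked consistently so that the symmetrizing exact term $-\tfrac12 d(i_P\theta-i_Q\pi)$ emerges correctly. Once the generator case is settled, no analytic input remains: the extension to $\wedge^2L^{\ast}$ is a formal consequence of the derivation and graded Jacobi properties, completing the proof.
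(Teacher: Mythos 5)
The paper itself contains no proof of Lemma \ref{braket}: it is quoted from Lemma 2 of \cite{Li} (see also \cite{Ra2}), so your attempt can only be measured against those sources, and it is in fact essentially the standard argument found there --- the twisted Courant bracket is realized as a derived bracket of $d_H$ in the spinor (Clifford) representation, and the $\wedge^{2}$ case follows formally. Your reduction of the right-hand side to $[C_a,[d,C_b]]$ is correct: since $a,b\in\wedge^{2}L^{\ast}$ are even, all commutators involved are ungraded and $C_aC_b=C_bC_a$ by the paper's rule $a\cdot b\cdot\sigma=(-1)^{pq}b\cdot a\cdot\sigma$; and your degree-one identity, including the check that the $-H\wedge$ part of $d$ contributes exactly $(i_Qi_PH)\wedge$ inside the double commutator, is precisely Gualtieri's computation. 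So the strategy is sound and, modulo the caveats below, the proof is complete.

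Two tacit uses of the hypotheses should be made explicit. First, the double graded commutator $[[d,C_p],C_q]$ produces the \emph{non-antisymmetric} Dorfman bracket $[P,Q]+L_P\theta-i_Qd\pi+i_Qi_PH$; this equals the Courant expression you display, with the symmetrizing term $-\tfrac12 d(i_P\theta-i_Q\pi)$, only up to $\tfrac12 d\langle p,q\rangle$, which vanishes here because $p,q$ are sections of the isotropic bundle $\overline{L}=L^{\ast}$ --- for general sections of $T_X\oplus T_X^{\ast}$ your displayed degree-one identity is false as stated, so the appeal to isotropy must be said aloud. Second, the propagation to $\wedge^{2}L^{\ast}$ quietly uses integrability, $[\overline{L},\overline{L}]_H\subset\overline{L}$: expanding $[C_{p\cdot q},[d,C_{r\cdot s}]]$ by the Leibniz rule leaves terms such as $C_{[p,r]}C_qC_s$, and to reorder these into the Schouten expansion of Section 2 with the correct Koszul signs you must anticommute $C_{[p,r]}$ past the leftover factors and use the antisymmetry $[r,p]=-[p,r]$, both of which require $\langle [p,r],q\rangle=0$ etc., i.e.\ that the Courant brackets of sections of $\overline{L}$ again lie in the isotropic bundle. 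Finally, decomposables generate $\wedge^{2}L^{\ast}$ only over $C^{\infty}(X)$ and neither side of the identity is function-linear; this is harmless, since a coefficient is absorbed as $f\,p\cdot q=(fp)\cdot q$ and your generator computation allows arbitrary smooth sections, but it deserves a sentence. With these points stated, your argument is a correct, self-contained proof matching the cited references in spirit.
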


\subsection{Some elliptic operators}
We first introduce a generalized Hermitian metric on $T_X\oplus T_X^{\ast}$.
\begin{definition}(\cite{Gu2}, Page 3)\label{biip}
Let $(X, J)$ be a generalized complex manifold and $G$ be an endomorphism on $T_X \oplus T_X^{\ast}$ with $G^2=1, GJ=JG$ .  We define
\begin{eqnarray*}
(T_X \oplus T_X^{\ast}) \otimes (T_X \oplus T_X^{\ast})
&\to & C^{\infty }(X),\\
a, b&\mapsto &  <Ga, b>\\
\end{eqnarray*}

where $a, b\in T_X \oplus T_X^{\ast}$ and $<\cdot, \cdot> $ is defined in Definition \ref{pair}. Since $G^2=1, GJ=JG$, $<G\cdot, \cdot>$ is a positive-definite and Hermitian-symmetric metric on $T_X \oplus T_X^{\ast}$. And we call $(X,G)$ a generalized Hermitian manifold.

\end{definition}

The restriction of this metric $<G\cdot, \cdot>$ to the the sub-bundle $T_X$ can be written as a Hermitian metric $\tilde{g}:=g-bg^{-1}b$, where $g$ is a Hermitian metric and $b$ is a 2-form.
And the volume element induced by this metric is

\begin{eqnarray*}
dV_{\tilde{g}} &=&\frac{ \sqrt{ det (g-bg^{-1}b)}}{\sqrt{det (g)}}dV_g\\
&=& \frac {det(g+b)}{det (g)}dV_g.
\end{eqnarray*}

Analogous to the ordinary complex case, we introduce the Hodge $\ast$-operator and use it to define an inner product on $\wedge^{\bullet}T_X^{\ast}$ which we call it Born-Infeld inner products.
Since $G^2=1,$ we have the decomposition $T_X \oplus T_X^{\ast}=C_+ \oplus C_-,$ where $C_{\pm}$ is $\pm 1$-eigenvalue subspace of $G$ respectively. And we choose an orthonormal real basis $\{a_1,\cdots , a_{2n}\}$ of $C_+$ with respect to the metric $<G\cdot, \cdot>.$
We define the Hodge-$\ast$  operator by
\begin{eqnarray*}
\ast &=& a_1\cdot  a_2 \cdot \cdots \cdot a_{2n},
\end{eqnarray*}
which is a product of an oriented orthonormal basis for $C_+$. We have already known that $\ast$ is a real operator, that is, $\ast=\overline{\ast}$.
We now introduce the Born-Infeld inner product $(\cdot, \cdot)$ on $\wedge ^{\bullet}T_{X}^{\ast}$ :
\begin{eqnarray}
(\alpha,\beta)&:=& \int _X \alpha\wedge \sigma (\ast)\bar {\beta},\label{BIinnerproduct}\\
||\alpha||^2&:=&(\alpha,\alpha),\nonumber
\end{eqnarray}
where $\sigma(a_1\cdot \cdots \cdot a_{2n}):=a_{2n}\cdot \cdots \cdot a_1$.

We now introduce some elliptic operators analogous to the complex case.
Let $(X,G)$ be a compact  generalized Hermitian manifold. Denote $d^{\ast}, ~\partial^{\ast},~\overline\partial^{\ast}$ as the adjoint operators of $ d,~ \partial,~\overline\partial$ with respect to the Born-Infeld inner product $( \cdot,\cdot )$ respectively, that is, for any $\alpha, \beta \in \wedge^{\bullet}T_X^{\ast}$,

\begin{eqnarray*}
(\partial \alpha,\beta) &=&  (\alpha,\partial^{\ast}\beta),  \\
(\overline\partial \alpha,\beta) &=& (\alpha,\overline\partial^{\ast}\beta), \\
(d \alpha,\beta) &=& (\alpha,d^{\ast}\beta).
\end{eqnarray*}

\begin{remark}
Now, we give a discussion about the order of $\partial^{\ast}, ~\overline\partial^{\ast}$.
We know that $\partial^{\ast}=\ast \overline\partial \ast^{-1} $ and $\overline\partial^{\ast}=\ast \partial \ast^{-1}$.
Since $\partial, \overline\partial$ are operators of order 1, we have that  $\partial^{\ast}, \overline\partial^{\ast}$ are also operators of order 1

\end{remark}

We also define the Laplacian operators by
\begin{eqnarray*}
\triangle_{d} &:=&  d d^{\ast}+d^{\ast}d, \\
\triangle_{\partial} &:=&  \partial \partial^{\ast}+\partial^{\ast}\partial, \\
\triangle_{\overline\partial} &:=&  \overline\partial \overline\partial^{\ast}+\overline\partial^{\ast}\overline\partial.
\end{eqnarray*}

Then one can show that $~\triangle_{\overline\partial}$ are a self-adjoint operator with respect to inner product $( \cdot,\cdot )$ and
\begin{eqnarray*}
1&= &\mathbb{H} + G\triangle_{\overline\partial}=\mathbb{H} + \triangle_{\overline\partial}G.
\end{eqnarray*}
where $\mathbb{H}$ is the projection onto $ H^{\ast}_{\overline\partial}(X)$, and $G_{\overline\partial}$ is the Green operator corresponding to $\triangle_{\overline\partial}$.
And $\triangle_{d},~\triangle_{\partial}$ have the similar propositions.
If $X$ is a compact  generalized K\"ahler manifold, that is, $J':=-GJ$ is also a generalized complex structure, we  have that
\begin{eqnarray*}
\triangle_{d}& =&  2\triangle_{\partial}=2\triangle_{\overline\partial}.
\end{eqnarray*}

We now introduce  Bott-Chern and Aeppli's Laplacian operators. We refer  the readers  to \cite{Sc, An02, An04, An05} for details.
\begin{definition}

\begin{eqnarray*}
\Delta_{BC}&:=&(\partial\overline\partial)(\partial\overline\partial)^{\ast}+(\partial\overline\partial)^{\ast}(\partial\overline\partial)
+(\overline\partial^{\ast}\partial)(\overline\partial^{\ast}\partial)^{\ast}+(\overline\partial^{\ast}\partial)^{\ast}(\overline\partial^{\ast}\partial)
+\overline\partial^{\ast}\overline\partial+\partial^{\ast}\partial,\\
\Delta_{A}&:=&(\overline\partial\partial)(\overline\partial\partial)^{\ast}+(\overline\partial\partial)^{\ast}(\overline\partial\partial)
+(\partial\overline\partial^{\ast})(\partial\overline\partial^{\ast})^{\ast}+(\partial\overline\partial^{\ast})^{\ast}(\partial\overline\partial^{\ast})
+\overline\partial\overline\partial^{\ast}+\partial\partial^{\ast}.
\end{eqnarray*}

\end{definition}

\begin{lemma}

\begin{eqnarray*}
Ker \Delta_{BC}&=& Ker \partial \cap Ker \overline\partial \cap Ker (\partial\overline\partial)^{\ast},\\
Ker \Delta_{A}&=& Ker \partial^{\ast} \cap Ker \overline\partial^{\ast} \cap Ker (\partial\overline\partial),\\
I&=&\mathbb{H}_{BC}+\Delta_{BC} G_{BC},\\
I&=&\mathbb{H}_{A}+\Delta_{A} G_{A},\\
\wedge^{\bullet}T^{\ast}_X&=& Ker \Delta_{BC} \oplus Im (\partial\overline\partial) \oplus (Im \partial^{\ast} +Im \overline\partial^{\ast}),\\
\wedge^{\bullet}T^{\ast}_X&=& Ker \Delta_{A} \oplus Im (\partial\overline\partial)^{\ast} \oplus (Im \partial +Im \overline\partial).
\end{eqnarray*}
where $\mathbb{H}_{BC}, \mathbb{H}_{A}$ are projections onto $\mathcal{H}^k_{BC}(X):= Ker \Delta_{BC} \cap U^k(X), \mathcal{H}^k_{A}(X):= Ker \Delta_{A} \cap U^k(X),$ and $G_{BC}, G_{A}$ are the Green's operator of $\Delta_{BC}, \Delta_{A}$ respectively.

\end{lemma}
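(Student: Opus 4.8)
The plan is to treat this as the generalized-complex analogue of the standard Bott--Chern and Aeppli Hodge theory, reducing everything to the spectral theory of a nonnegative self-adjoint \emph{elliptic} operator on the compact manifold $X$. Throughout I would use that $\partial,\overline\partial$ are first-order operators (established in the remark following Definition~\ref{liederivation}), so that $\partial^{\ast},\overline\partial^{\ast}$ are first-order as well and $\Delta_{BC},\Delta_{A}$ are differential operators of order four. I would only write out the argument for $\Delta_{BC}$; the statements for $\Delta_{A}$ follow by the formal symmetry $\partial\leftrightarrow\partial^{\ast}$, $\overline\partial\leftrightarrow\overline\partial^{\ast}$, which interchanges the two Laplacians and the two decompositions.

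For the kernel characterizations, I would pair $\Delta_{BC}\alpha$ with $\alpha$ under the Born--Infeld inner product $(\cdot,\cdot)$ of \eqref{BIinnerproduct}. Using $(\partial\overline\partial)^{\ast}=\overline\partial^{\ast}\partial^{\ast}$ and $(\overline\partial^{\ast}\partial)^{\ast}=\partial^{\ast}\overline\partial$, each of the six summands defining $\Delta_{BC}$ contributes a nonnegative square, so that
\begin{eqnarray*}
(\Delta_{BC}\alpha,\alpha)&=&\|(\partial\overline\partial)^{\ast}\alpha\|^2+\|\partial\overline\partial\alpha\|^2+\|\partial^{\ast}\overline\partial\alpha\|^2+\|\overline\partial^{\ast}\partial\alpha\|^2+\|\overline\partial\alpha\|^2+\|\partial\alpha\|^2.
\end{eqnarray*}
Since $\Delta_{BC}$ is nonnegative and self-adjoint, $\Delta_{BC}\alpha=0$ is equivalent to the simultaneous vanishing of all six terms; but $\partial\alpha=0$ and $\overline\partial\alpha=0$ already force $\partial\overline\partial\alpha=\partial^{\ast}\overline\partial\alpha=\overline\partial^{\ast}\partial\alpha=0$, so the only surviving conditions are $\partial\alpha=\overline\partial\alpha=(\partial\overline\partial)^{\ast}\alpha=0$. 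This is exactly $Ker\,\Delta_{BC}=Ker\,\partial\cap Ker\,\overline\partial\cap Ker\,(\partial\overline\partial)^{\ast}$, and the Aeppli statement is its dual.

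For the decomposition, I would first verify that the three summands $Ker\,\Delta_{BC}$, $Im\,(\partial\overline\partial)$ and $Im\,\partial^{\ast}+Im\,\overline\partial^{\ast}$ are mutually orthogonal: if $\alpha\in Ker\,\Delta_{BC}$, then $(\partial\overline\partial)^{\ast}\alpha=0$ annihilates the pairing with $Im\,(\partial\overline\partial)$ while $\partial\alpha=\overline\partial\alpha=0$ annihilates the pairing with $Im\,\partial^{\ast}+Im\,\overline\partial^{\ast}$; and $\partial^2=\overline\partial^2=0$ together with $\partial\overline\partial=-\overline\partial\partial$ give $(\partial\overline\partial\beta,\partial^{\ast}\gamma)=(\partial\overline\partial\beta,\overline\partial^{\ast}\gamma)=0$. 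Granting ellipticity (see below), the Hodge--Kodaira theorem for nonnegative self-adjoint elliptic operators on compact $X$ furnishes finite-dimensional harmonic spaces $\mathcal{H}^k_{BC}(X)$, the Green operator $G_{BC}$, and the identity $I=\mathbb{H}_{BC}+\Delta_{BC}G_{BC}$. Writing $\psi=G_{BC}\alpha$ and expanding $\Delta_{BC}\psi$ term by term, the first summand lies in $Im\,(\partial\overline\partial)$, while the remaining five lie in $Im\,\partial^{\ast}+Im\,\overline\partial^{\ast}$ (again using $(\partial\overline\partial)^{\ast}=\overline\partial^{\ast}\partial^{\ast}$ and $(\overline\partial^{\ast}\partial)^{\ast}=\partial^{\ast}\overline\partial$). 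Combined with $\mathbb{H}_{BC}\alpha\in Ker\,\Delta_{BC}$ and the orthogonality just established, this exhibits every $\alpha$ in the claimed orthogonal direct sum; the Aeppli decomposition follows by duality.

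The main obstacle is the analytic input: verifying that $\Delta_{BC}$ and $\Delta_{A}$ are genuinely elliptic fourth-order operators, since this is what legitimizes the package $I=\mathbb{H}_{BC}+\Delta_{BC}G_{BC}$ and the finiteness of $\mathcal{H}^k_{BC}(X)$. This requires computing the principal symbol of $\Delta_{BC}$ on the Clifford modules $U^{k}(X)$ and showing it is an isomorphism off the zero section. As in Schweitzer's computation in the classical Hermitian case \cite{Sc}, the pure fourth-order part $(\partial\overline\partial)(\partial\overline\partial)^{\ast}+(\partial\overline\partial)^{\ast}(\partial\overline\partial)+(\overline\partial^{\ast}\partial)(\overline\partial^{\ast}\partial)^{\ast}+(\overline\partial^{\ast}\partial)^{\ast}(\overline\partial^{\ast}\partial)$ has a degenerate symbol, so the delicate point is that the added second-order term $\overline\partial^{\ast}\overline\partial+\partial^{\ast}\partial$ restores invertibility of the total symbol; once this is checked the remaining assertions are formal consequences of elliptic theory.
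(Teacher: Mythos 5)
Your proposal follows the same skeleton as the paper's proof: the two kernel identities are obtained by pairing $\Delta_{BC}\alpha$ with $\alpha$ and reading off six nonnegative squares, the mutual orthogonality of $Ker\,\Delta_{BC}$, $Im\,(\partial\overline\partial)$ and $Im\,\partial^{\ast}+Im\,\overline\partial^{\ast}$ is checked with exactly the same adjoint pairings, and the identities $I=\mathbb{H}_{BC}+\Delta_{BC}G_{BC}$ and the decompositions are then extracted from the Hodge--Kodaira package for a nonnegative self-adjoint elliptic operator. All of that is fine. The problem is that the one step carrying the analytic content --- ellipticity of $\Delta_{BC}$ and $\Delta_{A}$ --- is left unproven, and the route you sketch for proving it is not viable. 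You assert that the pure fourth-order part of $\Delta_{BC}$ has a degenerate symbol and that the added second-order terms $\overline\partial^{\ast}\overline\partial+\partial^{\ast}\partial$ ``restore invertibility of the total symbol.'' For a fourth-order operator the principal symbol is determined by the fourth-order part alone; second-order terms contribute nothing to it and cannot repair a degenerate principal symbol. If the quartic symbol really were degenerate, $\Delta_{BC}$ would simply fail to be elliptic, and the existence of $G_{BC}$, the finiteness of $\mathcal{H}^k_{BC}(X)$, and both decompositions would not follow from standard elliptic theory.

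In fact the quartic part is already nondegenerate, and verifying this is the bulk of the paper's proof. The author computes the leading symbols of $\partial$, $\overline\partial$ and $\overline\partial^{\ast}=\ast\,\partial\,\ast^{-1}$ through the Clifford actions of the frames $\{l_i\}$, $\{l^j\}$, uses $\langle GL,L\rangle=0$ to establish the Clifford identity
\begin{eqnarray*}
l^q\cdot l_j\cdot Gl^p\cdot Gl_i+Gl^p\cdot l^q\cdot Gl_i\cdot l_j+Gl_i\cdot Gl^p\cdot l_j\cdot l^q+Gl_i\cdot l_j\cdot Gl^p\cdot l^q=\langle l^q,Gl_i\rangle\langle l_j,Gl^p\rangle,
\end{eqnarray*}
and concludes that the four quartic terms combine, in a coframe normalized by $\langle Gl_i,\overline{l}_j\rangle=\delta_{ij}$, into $\sum_{p,q}(\overline{l}_q\circ d)(l_q\circ d)(\overline{l}_p\circ d)(l_p\circ d)$ plus lower-order terms, which is strongly elliptic. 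The terms $\overline\partial^{\ast}\overline\partial+\partial^{\ast}\partial$ are present for an algebraic reason, not an analytic one: they cut the kernel down to $Ker\,\partial\cap Ker\,\overline\partial\cap Ker\,(\partial\overline\partial)^{\ast}$. (The same is true in Schweitzer's classical Hermitian case, where the full quartic symbol is already injective at every nonzero covector.) Until you carry out a symbol computation of this kind in the generalized setting --- where the answer genuinely depends on the Clifford module structure of $U^{k}(X)$ and on the metric $G$, and is not a verbatim citation of \cite{Sc} --- your proof has a gap precisely at the step you yourself flagged as the main obstacle.
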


\begin{proof}

For the first formula, since
\begin{eqnarray*}
(\Delta_{BC}\sigma, \sigma)&=& ||(\partial \overline\partial)^{\ast}\sigma||^2+||(\partial \overline\partial)\sigma||^2
+||(\overline\partial^{\ast} \partial)\sigma||^2+||(\overline\partial^{\ast} \partial)^{\ast}\sigma||^2+||\overline\partial \sigma||^2
+||\partial \sigma||^2,
\end{eqnarray*}

we have that $Ker \Delta_{BC}=Ker \partial \cap Ker\overline\partial \cap Ker (\partial\overline\partial)^{\ast}$. The second formula is similar to the
first one.

We now show that  $\Delta_{\overline\partial}, ~\Delta_{BC}$ is strongly elliptic.
Set $L: \{ l_1,\cdots, l_{2n} \}, L^{\ast}: \{ l^1, \cdots , l^{2n} \}$ with $l^j(l_i)=\delta_i^j$ be  a fixed basis of $L$ and $L^{\ast}$ respectively,
Since $[L,L]\subset L,$ we have that
$[l_i,l_j]:=c_{ij}^k l_k,$ where $c_{ij}^k=-c_{ji}^k$.
Then
\begin{eqnarray*}
(d_L l^p) (l_i, l_j)&:=&p(l_i)\delta^p_j-p(l_j)\delta^p_i-c_{ij}^p,
\end{eqnarray*}
that is,
\begin{eqnarray*}
(d_L l^p) &=&\frac{1}{2}(p(l_i)\delta^p_j-p(l_j)\delta^p_i-c_{ij}^p)l^i \cdot l^j.
\end{eqnarray*}

Since we only care about the highest-order terms in the computation, we denote $\approx$ as  the equivalence on the highest-order terms. For example,
\begin{eqnarray*}
(d_L f) &\approx &l_p(d f)l^p,
\end{eqnarray*}
where $f$ is a function.

For any $\sigma \in U^{k}(X), $ we represent $\sigma$ as $\sigma :=f\rho_1  $ locally, where $f$ is a smooth function and $\rho_1:=\frac{1}{(k+n)!}l^{i_1}\cdot \cdots \cdot l^{i_{k+n}}\cdot \rho_0 \in U^{k}(X), ~l^{i_{\alpha}} \in L^{\ast},~ \rho_0\in U^{-n}(X)$. Then we have that

\begin{eqnarray*}
\overline\partial (f\rho_1)
&\approx &d_L (f) \rho_1
\approx (l_p\circ d) (f) l^p \cdot \rho_1 \\
\partial (f\rho_1)
&\approx &(l^i\circ d) (f) \cdot l_i \cdot \rho_1 \\
\overline\partial^{\ast} (f  \rho_1)
&:=& \ast \partial \ast^{-1}(f  \rho_1)\\
&:=&\ast \partial (f  a_{2n}\cdot \cdots \cdot a_1 \cdot\rho_1)\\
&\approx &\ast (l^p\circ d)  (f) \cdot l_p \cdot  a_{2n}\cdot \cdots \cdot a_1 \cdot \rho_1\\
&:=&a_{1}\cdot \cdots \cdot a_{2n} \cdot (l^p\circ d) (f) \cdot l_p \cdot  a_{2n}\cdot \cdots \cdot a_1 \cdot \rho_1)\\
&\approx &(- (l^p\circ d) f <l_p, a^k>a_k+  (l^p\circ d)  (f) \cdot <l_p, b^k>b_k )\cdot \rho_1\\
&=&(l^p\circ d)  (f) \cdot Gl_p\cdot \rho_1
\end{eqnarray*}

The fifth equality holds since the fact that $\ast a \ast^{-1}=-a,  ~\ast b \ast^{-1}=b$ where $a\in C_+, b\in C_-;$
and the sixth equivalence since $Ga=a, Gb=-b$(\cite{FG}).
Thus, we have that
\begin{eqnarray*}
\Delta_{\overline\partial}( f\rho_1)
&=&-(l^i\circ d)(l_p\circ d)(f))\cdot <Gl_i,l^p>\cdot  \rho_1+ \mbox{lower-order terms }.
\end{eqnarray*}

By the fact that $L^{\ast}=\overline{L},$ we change the basis of  $L^{\ast}=\overline{L}$ by $\overline{L}: \{\overline{l_1}, \cdots, \overline{l}_{2n}\}$ such that
$<Gl_i, \overline{l}_j>=\delta_{ij}$ since $G$ is a positive-definite Hermitian metric.
Then the formula above can be written as
\begin{eqnarray*}
\Delta_{\overline\partial}( f\rho_1)&=&-\sum_{i=1}^{2n}(l^i\circ d)(\overline{l}_i\circ d)(f))\cdot  \rho_1+ \mbox{lower-order terms }.
\end{eqnarray*}
and thus $\Delta_{\overline\partial}$ is strongly elliptic.

Now, we compute the highest-order terms of $\Delta_{BC}(f\rho_1).$
Since $JG(L)=GJ(L)=iG(L),$ we have that $G(L) \subset L$ and thus $<GL,L>=0.$
Then we have that
\begin{eqnarray*}
&&l^q\cdot l_j \cdot Gl^p \cdot Gl_i  + Gl^p\cdot l^q \cdot   Gl_i\cdot l_j+  Gl_i \cdot Gl^p \cdot l_j \cdot l^q+Gl_i \cdot l_j \cdot Gl^p \cdot l^q \\
&=&l^q\cdot l_j \cdot Gl^p \cdot Gl_i  - l^q\cdot Gl^p \cdot  l_j \cdot Gl_i+<Gl^p, l^q>Gl_i\cdot l_j-<Gl_i, l_j>Gl^p\cdot l^q\\
&&+  Gl_i \cdot Gl^p \cdot l_j \cdot l^q+Gl_i \cdot l_j \cdot Gl^p \cdot l^q \\
&=&<l^q, Gl_i><l_j , Gl^p>
\end{eqnarray*}

Also, since $(l_i\circ d)(l_j\circ d)=(l_j\circ d)(l_i\circ d)$ , we have that
\begin{eqnarray*}
\Delta_{BC} (f\rho_1)&\approx &(l^i\circ d)(l_p\circ d)(l^j\circ d)(l_q\circ d)(f)\cdot
(l^q\cdot l_j \cdot Gl^p \cdot Gl_i  + Gl^p\cdot l^q \cdot   Gl_i\cdot l_j\\
&&+  Gl_i \cdot Gl^p \cdot l_j \cdot l^q+Gl_i \cdot l_j \cdot Gl^p \cdot l^q )\cdot \rho_1\\
&=&(l^i\circ d)(l_p\circ d)(l^j\circ d)(l_q\circ d)(f)\cdot <Gl_i,l^q><Gl^p,l_j>\cdot  \rho_1.
\end{eqnarray*}

Also, we also change the basis of  $L^{\ast}=\overline{L}$ by $\overline{L}: \{\overline{l_1}, \cdots, \overline{l_{2n}}\}$ such that
$<Gl_i, \overline{l}_j>=\delta_{ij}$. Then the formula above can be written as
\begin{eqnarray*}
\Delta_{BC} (f\rho_1)&=&\sum_{p,q=1}^{2n}(\overline{l}_q\circ d)(l_q\circ d)(\overline{l}_p\circ d)(l_p\circ d)(f)\cdot  \rho_1+\mbox{lower-order terms}.
\end{eqnarray*}
and thus $\Delta_{BC}$ is strongly elliptic.

Now, by the fact that $\Delta_{BC}$ is a strongly elliptic operator, there exists the Green operator $G_{BC}$, and
\begin{eqnarray*}
I=\mathbb{H}_{BC}+\Delta_{BC} G_{BC}.
\end{eqnarray*}

Thus, we have that

\begin{eqnarray*}
\wedge^{\bullet}T^{\ast}_X=Ker \Delta_{BC} +Im (\partial\overline\partial) +Im \partial^{\ast} +Im \overline\partial^{\ast}.
\end{eqnarray*}

For any $x\in Ker \Delta_{BC}, y:=\partial\overline\partial y_1\in  Im (\partial\overline\partial), z:=\partial^{\ast}z_1\in Im \partial^{\ast},
w:=\overline\partial^{\ast} w_1=\in Im \overline\partial^{\ast}$, we have that
\begin{eqnarray*}
(x,y)&:=&(x, \partial\overline\partial y_1)=((\partial\overline\partial) ^{\ast} x,  y_1)=0,\\
(x,z)&:=&(x, \partial^{\ast}z_1)=(\partial x, z_1)=0,\\
(x,w)&:=&(x, \overline\partial^{\ast}w_1)=(\overline\partial x, w_1)=0,\\
(y,z)&:=&(\partial\overline\partial y_1, \partial^{\ast}z_1)=(\partial^2\overline\partial y_1,z_1)=0,\\
(y,w)&:=&(\partial\overline\partial y_1, \overline\partial^{\ast}w_1)=(-\partial\overline\partial^2 y_1,w_1)=0.
\end{eqnarray*}

Thus
\begin{eqnarray*}
\wedge^{\bullet}T^{\ast}_X=Ker \Delta_{BC} \oplus Im (\partial\overline\partial) \oplus (Im \partial^{\ast} +Im \overline\partial^{\ast}).
\end{eqnarray*}

Similarly, we have that $\Delta_{A}$ is also strongly elliptic and has the above results.

\end{proof}

\begin{remark}
If $X$ is a compact generalized K\"ahler manifold, we have that
\begin{eqnarray*}
\overline\partial^{\ast}\partial&=& -\partial \overline\partial^{\ast}, ~\overline\partial \partial^{\ast}=-
\partial^{\ast}\overline\partial,\\
\overline\partial^{\ast}\overline\partial&=&-\overline\partial\overline\partial^{\ast}, ~\partial \partial^{\ast}
=-\partial^{\ast}\partial.
\end{eqnarray*}

Then

\begin{eqnarray*}
\Delta_{BC}&=&\Delta^2_{\overline\partial}+\overline\partial ^{\ast}\overline\partial +\partial ^{\ast} \partial.
\end{eqnarray*}

So, by the fact that  $\Delta_{\overline\partial}$ is elliptic,  $\Delta_{BC}$ is also elliptic.

\end{remark}

For the Bott-Chern and Aeppli's Laplacian operators, we have the following lemma which  will be used later.
\begin{lemma}\label{laplacianbca}

\begin{eqnarray*}
\Delta _{BC}(\partial\overline\partial)(\partial\overline\partial)^{\ast}&=&(\partial\overline\partial)(\partial\overline\partial)^{\ast}\Delta _{BC},\\
\Delta _{A}(\overline\partial\partial)^{\ast}(\overline\partial\partial)&=&(\overline\partial\partial)^{\ast}(\overline\partial\partial)\Delta _{A},\\
\Delta_{BC}(\partial \overline\partial)&=&(\partial \overline\partial)(\partial \overline\partial)^{\ast}(\partial \overline\partial)
=(\partial \overline\partial)\Delta_{A},\\
(\partial\overline\partial)^{\ast}\Delta_{BC}&=& (\partial\overline\partial)^{\ast}(\partial\overline\partial)(\partial\overline\partial)^{\ast}
=\Delta_{A}(\partial\overline\partial)^{\ast},\\
G _{BC}(\partial\overline\partial)(\partial\overline\partial)^{\ast}&=&(\partial\overline\partial)(\partial\overline\partial)^{\ast}G _{BC},\\
G _{A}(\overline\partial\partial)^{\ast}(\overline\partial\partial)&=&(\overline\partial\partial)^{\ast}(\overline\partial\partial)G _{A},\\
G_{BC}(\partial \overline\partial)&=&(\partial \overline\partial)G_{A},\\
(\partial\overline\partial)^{\ast}G_{BC}&=& G_{A}(\partial\overline\partial)^{\ast}.
\end{eqnarray*}

\end{lemma}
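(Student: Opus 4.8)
The plan is to reduce all eight identities to two \emph{core} intertwining relations---the third and fourth displayed lines---together with their formal adjoints, after which every remaining identity follows by simple concatenation. The whole computation rests on three purely algebraic facts, $\partial^2=\overline\partial^2=0$ and $\partial\overline\partial+\overline\partial\partial=0$, which I would establish first. Since $H$ is closed and $H\wedge H=0$, the twisted differential satisfies $d_H^2=0$; writing $d=\partial+\overline\partial$ (integrability of $J$) and comparing types in $0=d^2=\partial^2+(\partial\overline\partial+\overline\partial\partial)+\overline\partial^2$---whose three pieces map $U^k(X)$ into $U^{k-2}(X)$, $U^k(X)$, $U^{k+2}(X)$ respectively---forces each to vanish separately. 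Introducing the shorthand $D:=\partial\overline\partial$, so that $D^{\ast}=\overline\partial^{\ast}\partial^{\ast}$, $\overline\partial\partial=-D$ and $(\overline\partial\partial)^{\ast}=-D^{\ast}$, I would record the immediate consequences $\partial D=\overline\partial D=D\partial=D\overline\partial=0$ and $D^2=0$.

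For the core relation I would compute $\Delta_{BC}D$ against the six-term definition of $\Delta_{BC}$. Every summand except $DD^{\ast}$ carries, after the rewriting above, a factor that annihilates $D$ on the right: $(D^{\ast}D)D=D^{\ast}D^2=0$; the two Aeppli-type terms $(\overline\partial^{\ast}\partial)(\overline\partial^{\ast}\partial)^{\ast}$ and $(\overline\partial^{\ast}\partial)^{\ast}(\overline\partial^{\ast}\partial)$ vanish because $\overline\partial D=0$ and $\partial D=0$ respectively; and $\overline\partial^{\ast}\overline\partial D=\partial^{\ast}\partial D=0$ for the same reason. Only $DD^{\ast}D$ survives, giving $\Delta_{BC}D=(\partial\overline\partial)(\partial\overline\partial)^{\ast}(\partial\overline\partial)$. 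The identical bookkeeping on $D\Delta_A$---after rewriting its first two terms as $DD^{\ast}+D^{\ast}D$ via $\overline\partial\partial=-D$---again leaves only $DD^{\ast}D$, yielding the second equality $\Delta_{BC}D=D\Delta_A$ and hence the full third line. Taking adjoints and using the self-adjointness of $\Delta_{BC}$ and $\Delta_A$ gives the fourth line, $D^{\ast}\Delta_{BC}=D^{\ast}DD^{\ast}=\Delta_A D^{\ast}$. The first two lines then follow by chaining: $\Delta_{BC}DD^{\ast}=D\Delta_A D^{\ast}=DD^{\ast}\Delta_{BC}$, and likewise $\Delta_A D^{\ast}D=D^{\ast}\Delta_{BC}D=D^{\ast}D\Delta_A$ once one notes $(\overline\partial\partial)^{\ast}(\overline\partial\partial)=D^{\ast}D$.

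For the Green-operator identities I would use $I=\mathbb{H}_{BC}+\Delta_{BC}G_{BC}$ and $I=\mathbb{H}_A+\Delta_A G_A$ from the preceding lemma, together with the fact that $G_{BC}$ commutes with $\Delta_{BC}$. To prove the seventh line $G_{BC}D=DG_A$, I insert $I=\mathbb{H}_A+\Delta_A G_A$ after $D$. The term $G_{BC}D\mathbb{H}_A$ vanishes because $\Delta_{BC}(Dx)=D\Delta_A x=0$ for harmonic $x$, so $Dx\in\mathcal{H}_{BC}=\mathrm{Ker}\,\Delta_{BC}$ and $G_{BC}$ kills it; while $G_{BC}D\Delta_A G_A=G_{BC}\Delta_{BC}DG_A=(I-\mathbb{H}_{BC})DG_A=DG_A$, the last step using $\mathbb{H}_{BC}D=0$. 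This last point is exactly where the kernel description $\mathcal{H}_{BC}\subset\mathrm{Ker}(\partial\overline\partial)^{\ast}=\mathrm{Ker}\,D^{\ast}$ from the previous lemma enters: for $h\in\mathcal{H}_{BC}$ one has $(Dz,h)=(z,D^{\ast}h)=0$, so $\mathrm{Im}\,D\perp\mathcal{H}_{BC}$ and $\mathbb{H}_{BC}D=0$. The eighth line is the formal adjoint of the seventh, and the fifth and sixth lines follow by chaining these two, e.g.\ $G_{BC}DD^{\ast}=DG_A D^{\ast}=DD^{\ast}G_{BC}$.

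The only delicate bookkeeping is the term-by-term collapse of $\Delta_{BC}D$ and $D\Delta_A$, where the signs from $\overline\partial\partial=-\partial\overline\partial$ must be tracked carefully; I expect this to be routine once the relations $\partial D=\overline\partial D=0$ are in hand. The one genuinely structural input is the identity $\mathbb{H}_{BC}D=0$ (equivalently $\mathrm{Im}\,\partial\overline\partial\perp\mathcal{H}_{BC}$), which is what makes the two Green operators intertwine through $\partial\overline\partial$; it relies precisely on the explicit description of $\mathcal{H}_{BC}$ from the preceding lemma. No further analytic difficulty should arise, since the strong ellipticity of $\Delta_{BC}$ and $\Delta_A$---already established---guarantees the existence and standard Hodge-theoretic properties of $G_{BC}$ and $G_A$.
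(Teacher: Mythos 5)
Your proposal is correct and follows essentially the same route as the paper: the first four identities come from the term-by-term collapse of $\Delta_{BC}$ and $\Delta_A$ against $\partial\overline\partial$ using $\partial(\partial\overline\partial)=\overline\partial(\partial\overline\partial)=0$, and the Green-operator identities come from $I=\mathbb{H}+\Delta G$ together with the orthogonality of $\mathrm{Im}(\partial\overline\partial)$ to the harmonic spaces. The paper's own proof is much terser --- it verifies only the first and fifth formulas explicitly and declares the rest ``similar'' --- so your systematic derivation of the third and fourth lines by adjoints and of $G_{BC}(\partial\overline\partial)=(\partial\overline\partial)G_{A}$ (the least trivial of the omitted cases, since it intertwines two different Green operators via $\mathbb{H}_{BC}(\partial\overline\partial)=0$) supplies exactly the details the paper leaves out.
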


\begin{proof}

By definition of $\Delta_{BC}, ~\Delta_A$, we can get the first four formula. For example, we have that
\begin{eqnarray*}
\Delta _{BC}(\partial\overline\partial)(\partial\overline\partial)^{\ast}
&=&(\partial\overline\partial)(\partial\overline\partial)^{\ast}(\partial\overline\partial)(\partial\overline\partial)^{\ast}
=(\partial\overline\partial)(\partial\overline\partial)^{\ast}\Delta _{BC}.
\end{eqnarray*}

By the first formula,  we have that

\begin{eqnarray*}
G_{BC}\Delta _{BC}(\partial\overline\partial)(\partial\overline\partial)^{\ast}G_{BC}&=&G_{BC}(\partial\overline\partial)(\partial\overline\partial)^{\ast}\Delta _{BC}G_{BC},\\
=(1-\mathbb{H}_{BC})(\partial\overline\partial)(\partial\overline\partial)^{\ast}G_{BC}&=&G_{BC}(\partial\overline\partial)
(\partial\overline\partial)^{\ast}(I-\mathbb{H}_{BC}),\\
=(\partial\overline\partial)(\partial\overline\partial)^{\ast}G_{BC}&=&G_{BC}(\partial\overline\partial)(\partial\overline\partial)^{\ast}.
\end{eqnarray*}

So we prove the fifth formula and the  proofs of the rest formulas are similar.

\end{proof}

\subsection{Deformations}In this subsection, we introduce some propositions about deformations of compact generalized Hermitian manifolds.
Let $\pi: \mathcal{X} \to \Delta \subset \mathbb{C}^1 $ be a smooth  family of generalized complex manifolds where $\pi^{-1}(0):=X_0:=X$ is a compact  generalized Hermitian manifold,  $t$ be the local holomorphic coordinate on the unit disc $\Delta\subset \mathbb{C}^1$, $\epsilon(t)$  be a generalized Beltrami differentials from Kodaira-Spencer-Kuranishi's theory, and $M_{\epsilon(t)}$ be the generalized
K\"ahler manifold with generalized complex structure induced by $\epsilon(t)$.
We have already known that ~$$T_{X}\oplus T^{\ast}_{X}=L \oplus L^{\ast},$$ where $L$ is the $+i$-eigenvalue subspace of the generalized complex structure $J$ on $X$.
We assume that ~$$L: \{ l_1, l_2, \cdots , l_{2n}\},  ~~L^{\ast}: \{ l^1, l^2, \cdots , l^{2n}\}$$ are basis of $L$ and $ L^{\ast}$ respectively with $l^i(l_j)=\delta ^i_j$.

Since the generalized Beltrami differentials $\epsilon \in \wedge^{2}L^{\ast}$, we locally  have ~$$\epsilon=\frac{1}{2}\epsilon_{ij}l^i \cdot l^j,$$ with $l^i \cdot l^j=-l^i \cdot l^j, \epsilon_{ij}=-\epsilon_{ji}$. Then

\begin{eqnarray*}
\epsilon (l_p)&=&\frac{1}{2}\epsilon_{ij}l^i \cdot l^j(l_p)\\
&=&\frac{1}{2}\epsilon_{ip}l^i-\frac{1}{2}\epsilon_{pj}l^j\\
&=&\epsilon_{ip}l^i \in L^{\ast}.
\end{eqnarray*}

Let $(X_{\epsilon}, J_{\epsilon})$ be the  generalized complex structure induced by $\epsilon$. We have known that the $+i$-eigenvalue subspace in $T_X\oplus T_X^{\ast}$ corresponding to $J_{\epsilon}$ is $L_{\epsilon}=(1+\epsilon)( L).$ We assume the basis ~$$L_{\epsilon}:\{ \xi_1, \xi_2, \cdots, \xi_{2n}\},~~L^{\ast}_{\epsilon}:\{ \xi^1, \xi^2, \cdots, \xi^{2n}\}$$ of $L_{\epsilon}$ and $L^{\ast}_{\epsilon}$ respectively with $\xi^i(\xi_j)=\delta^i_j$. Then we have that

\begin{eqnarray}\label{deformation}
l_j (\xi_i)=\epsilon_{jk}l^k (\xi_i).
\end{eqnarray}

In matrix form , that is,
\begin{eqnarray*}
\left(\begin{array}{ccc}
l_1(\xi_1) &  \cdots &  l_{2n}(\xi_1)\\
\cdots &  \cdots  &   \cdots\\
l_1(\xi_{2n}) &  \cdots &  l_{2n}(\xi_{2n})
\end{array}\right)
&=&
\left(\begin{array}{ccc}
l^1(\xi_1) &  \cdots &  l^{2n}(\xi_1)\\
\cdots &  \cdots  &   \cdots\\
l^1(\xi_{2n}) &  \cdots &  l^{2n}(\xi_{2n})
\end{array}\right)
\left(\begin{array}{ccc}
\epsilon_{11} &  \cdots &  \epsilon_{2n,1}\\
\cdots &  \cdots  &   \cdots\\
\epsilon_{1,2n} &  \cdots &  \epsilon_{2n,2n},
\end{array}\right)
\end{eqnarray*}
and we simply denote it as
~$$L(L_{\epsilon})=L^{\ast}(L_{\epsilon}) [\epsilon],$$ or equivalently,
\begin{eqnarray}\label{deformationmatrixform}
L^{\ast}(L_{\epsilon})^{-1} L(L_{\epsilon})= [\epsilon].
\end{eqnarray}

We also define  $\epsilon^{\ast}:=\frac{1}{2}\epsilon^{ij}l_i\cdot l_j \in \wedge ^2 L$  where  $\epsilon^{ij}=-\epsilon^{ji},~ l_i\cdot l_j=-l_j\cdot l_i$  such that  $L_{\epsilon}^{\ast}=(1+\epsilon^{\ast})( L^{\ast})$.
In matrix form, we have that
\begin{eqnarray*}
\left(\begin{array}{ccc}
l^1(\xi^1) &  \cdots &  l^{2n}(\xi^1)\\
\cdots &  \cdots  &   \cdots\\
l^1(\xi^{2n}) &  \cdots &  l^{2n}(\xi^{2n})
\end{array}\right)
&=&
\left(\begin{array}{ccc}
l_1(\xi^1) &  \cdots &  l_{2n}(\xi^1)\\
\cdots &  \cdots  &   \cdots\\
l_1(\xi^{2n}) &  \cdots &  l_{2n}(\xi^{2n})
\end{array}\right)
\left(\begin{array}{ccc}
\epsilon^{11} &  \cdots &  \epsilon^{2n,1}\\
\cdots &  \cdots  &   \cdots\\
\epsilon^{1,2n} &  \cdots &  \epsilon^{2n,2n},
\end{array}\right)
\end{eqnarray*}
and we simply denote it as
~$$L^{\ast}(L_{\epsilon}^{\ast})=L(L_{\epsilon}^{\ast})[\epsilon^{\ast}],$$
or equivalently,
\begin{eqnarray}\label{deformationmatrixform02}
L(L_{\epsilon}^{\ast})^{-1} L^{\ast}(L_{\epsilon}^{\ast})= [\epsilon^{\ast}].
\end{eqnarray}

\begin{remark}
If $X$ is a compact complex manifold, we have that $L=T^{\ast 1,0}_X \oplus T^{0,1}_X,  L^{\ast}=T^{ 1,0}_X \oplus T^{\ast 0,1}_X$.
The Beltrami differentials $\varphi= \varphi_{\bar{j}}^i dz ^{\bar{j}} \frac{\partial}{\partial z^i}\in  A^{0,1}(X, T_X^{,0})$
and $\varphi (d z ^{\bar{p}})=0,  \varphi (d z ^{p})=\varphi_{\bar{j}}^p d z ^{\bar{j}}\in L^{\ast}$.

If we choose the basis ~$$T_X ^{\ast 1,0}:\{dz^i\}, T_{X_{\varphi}} ^{\ast 1,0}:\{d\zeta^i\},$$
we have that

\begin{eqnarray*}
\frac{\partial \zeta ^i}{\partial z ^{\bar{j}}} =\varphi ^k_{\bar{j}} \frac{\partial \zeta ^i}{\partial z ^{k}}.
\end{eqnarray*}

That is,
\begin{eqnarray*}
\left(\begin{array}{ccc}
\frac{\partial \zeta ^1}{\partial z ^{\bar{1}}} &  \cdots &  \frac{\partial \zeta ^1}{\partial z ^{\bar{n}}}\\
\cdots &  \cdots  &   \cdots\\
\frac{\partial \zeta ^n}{\partial z ^{\bar{1}}} &  \cdots &  \frac{\partial \zeta ^n}{\partial z ^{\bar{n}}}
\end{array}\right)
=
\left(\begin{array}{ccc}
\frac{\partial \zeta ^1}{\partial z ^{1} }&  \cdots &  \frac{\partial \zeta ^1}{\partial z ^{n}}\\
\cdots &  \cdots  &   \cdots\\
\frac{\partial \zeta ^n}{\partial z ^{1}} &  \cdots &  \frac{\partial \zeta ^n}{\partial z ^{n}}
\end{array}\right)
\left(\begin{array}{ccc}
\varphi^1_{\bar{1}} &  \cdots &  \varphi^n_{\bar{1}}\\
\cdots &  \cdots  &   \cdots\\
\varphi^n_{\bar{1}}&  \cdots &  \varphi^n_{\bar{n}}
\end{array}\right)
\end{eqnarray*}
and we simply denote it as
~$
(\zeta)_{\bar{z}}=\zeta_z [\varphi]$,   or equivalently, $[\varphi] =\zeta _z ^{-1}(\zeta)_{\bar{z}}.$

\end{remark}

The following propositions show us the relationship between the $+i$-eigenvalue subspace $L$ and $L_{\epsilon}$ induced by the generalized complex structure
$J:=J_0$ and $J_{\epsilon}$ respectively.

\begin{proposition}\label{unholpart}
\begin{eqnarray*}
\left(\begin{array}{cc}
L(L_{\epsilon}^{\ast}) &   L^{\ast}(L_{\epsilon}^{\ast})\\
L(L_{\epsilon}) &  L^{\ast}(L_{\epsilon})\end{array}\right)^{-1}
&=&\left(\begin{array}{cc}
L_{\epsilon}(L^{\ast}) &   L_{\epsilon}^{\ast}(L^{\ast})\\
L_{\epsilon}(L) &  L_{\epsilon}^{\ast}(L)\end{array}\right)\\
&=&
\left(\begin{array}{cc}
 (1-[\epsilon^{\ast}][\epsilon])^{-1}L(L_{\epsilon}^{\ast})^{-1}
&  -[\epsilon^{\ast}] (1-[\epsilon][\epsilon^{\ast}])^{-1}L^{\ast}(L_{\epsilon})^{-1}\\
-(1-[\epsilon] [\epsilon^{\ast}])^{-1}[\epsilon] L(L_{\epsilon}^{\ast})^{-1} & (1-[\epsilon][\epsilon^{\ast}])^{-1}L^{\ast}(L_{\epsilon})^{-1}
\end{array}\right).
\end{eqnarray*}

Here $[\epsilon], [\epsilon^{\ast}]$ means the matrix forms of $\epsilon, \epsilon^{\ast}$ respectively as those in  Formula \ref{deformationmatrixform} and Formula \ref{deformationmatrixform02}.

\end{proposition}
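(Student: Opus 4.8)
The plan is to read both sides as matrices of the pairing $\langle\cdot,\cdot\rangle$ of Definition \ref{pair} between the two adapted bases of $T_X\oplus T_X^\ast$, exploiting that each of $\{l_1,\dots,l_{2n},l^1,\dots,l^{2n}\}$ and $\{\xi_1,\dots,\xi_{2n},\xi^1,\dots,\xi^{2n}\}$ is a split basis for $\langle\cdot,\cdot\rangle$: since $\langle L,L\rangle=\langle L^\ast,L^\ast\rangle=0$ while $l^i(l_j)=\delta^i_j$ (and likewise for the $\xi$'s), the Gram matrix of either basis is the hyperbolic form $\left(\begin{smallmatrix}0&I\\I&0\end{smallmatrix}\right)$, so the dual basis of a split basis is obtained simply by interchanging its isotropic halves. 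I denote the left-hand factor by $M$ and the middle matrix by $N$; the first equality to prove is $M^{-1}=N$ and the second is that this common matrix equals the stated explicit block matrix.

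For $M^{-1}=N$ I would argue by biorthogonality rather than invert directly. Writing out the blocks shows that $M$ is the matrix of cross-pairings $\langle(\text{first basis}),(\text{second basis})\rangle$ with the second basis taken in its dual (isotropic-swapped) order, while $N$ is the matrix of cross-pairings in the opposite order with the first basis dualized. Computing $(MN)_{pq}=\sum_r M_{pr}N_{rq}$, the summation index $r$ pairs each coordinate of $M$ against $l_r$ with the coordinate of $N$ against its dual $l^r$; hence the completeness relation
\begin{eqnarray*}
v=\sum_i \langle l^i,v\rangle\, l_i+\sum_i \langle l_i,v\rangle\, l^i
\end{eqnarray*}
collapses the inner sum to a single pairing of two vectors of the second basis, and the split/dual structure of that basis turns it into $\delta_{pq}$. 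This gives $MN=I$ and hence $M^{-1}=N$.

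For the explicit formula I would invert $M$ directly. Substituting Formula \ref{deformationmatrixform} ($L(L_\epsilon)=L^\ast(L_\epsilon)[\epsilon]$) and Formula \ref{deformationmatrixform02} ($L^\ast(L_\epsilon^\ast)=L(L_\epsilon^\ast)[\epsilon^\ast]$) into the off-diagonal blocks factors $M$ as
\begin{eqnarray*}
M=\left(\begin{array}{cc} L(L_\epsilon^\ast) & 0\\ 0 & L^\ast(L_\epsilon)\end{array}\right)\left(\begin{array}{cc} I & [\epsilon^\ast]\\ [\epsilon] & I\end{array}\right),
\end{eqnarray*}
so $M^{-1}$ is the product of the inverse of the elementary block matrix and the inverse of the diagonal factor. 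The standard Schur-complement inversion of $\left(\begin{smallmatrix}I&[\epsilon^\ast]\\ [\epsilon]&I\end{smallmatrix}\right)$ produces the diagonal blocks $(1-[\epsilon^\ast][\epsilon])^{-1}$ and $(1-[\epsilon][\epsilon^\ast])^{-1}$ (both invertible once $\|\epsilon\|_{L^\infty}<1$) together with the stated off-diagonal terms; the bottom-left block matches on the nose, and the only remaining discrepancy, in the top-right block, is removed by the push-through identity $(1-[\epsilon^\ast][\epsilon])^{-1}[\epsilon^\ast]=[\epsilon^\ast](1-[\epsilon][\epsilon^\ast])^{-1}$.

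The main obstacle is bookkeeping rather than analysis: getting the row/column orderings and the isotropic-half swaps exactly right so that $N$ is genuinely $M^{-1}$ and not merely a transpose or reshuffle of it. Once the split-basis/dual-basis dictionary above is fixed, both equalities reduce to the completeness relation and to a single $2\times2$ block inversion.
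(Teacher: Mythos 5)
Your proposal is correct and follows essentially the same route as the paper: the first equality is obtained from the same completeness/biorthogonality relations $v=\sum_i l^i(v)\,l_i+\sum_i l_i(v)\,l^i$ applied to both split bases, and the explicit formula comes from a block Gaussian elimination using $L(L_\epsilon)=L^\ast(L_\epsilon)[\epsilon]$, $L^\ast(L_\epsilon^\ast)=L(L_\epsilon^\ast)[\epsilon^\ast]$ together with the push-through identity $(1-[\epsilon^\ast][\epsilon])^{-1}[\epsilon^\ast]=[\epsilon^\ast](1-[\epsilon][\epsilon^\ast])^{-1}$. The only cosmetic difference is in the second half: you factor $M$ as a block-diagonal matrix times $\bigl(\begin{smallmatrix} I & [\epsilon^\ast]\\ [\epsilon] & I\end{smallmatrix}\bigr)$ and invert the latter by Schur complements, which yields both diagonal blocks symmetrically, whereas the paper row-reduces $M$ to block upper-triangular form, inverts via its Lemma \ref{inversematrix}, and must then verify $1+[\epsilon^\ast](1-[\epsilon][\epsilon^\ast])^{-1}[\epsilon]=(1-[\epsilon^\ast][\epsilon])^{-1}$ separately to identify the top-left block.
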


\begin{proof}
Since
\begin{eqnarray*}
\xi_i=l^k(\xi_i)l_k+l_k(\xi_i)l^k,
\end{eqnarray*}
we have that
\begin{eqnarray*}
\delta_i^j&=&\xi^j(\xi_i)=l^k(\xi_i)\xi^j(l_k)+l_k(\xi_i)\xi^j(l^k),\\
0&=&\xi_j(\xi_i)=l^k(\xi_i)\xi_j(l_k)+l_k(\xi_i)\xi_j(l^k).
\end{eqnarray*}
In matrix form, that is,
\begin{eqnarray*}
I_{2n\times 2n}&=&L^{\ast}(L_{\epsilon})L^{\ast}_{\epsilon}(L)+L(L_{\epsilon})L^{\ast}_{\epsilon}(L^{\ast}),\\
0&=&L^{\ast}(L_{\epsilon})L_{\epsilon}(L)+L(L_{\epsilon})L_{\epsilon}(L^{\ast}).
\end{eqnarray*}

Similarly, since
\begin{eqnarray*}
\xi^i=l^k(\xi^i)l_k+l_k(\xi^i)l^k,
\end{eqnarray*}
we have that
\begin{eqnarray*}
0&=&\xi^j(\xi^i)=l^k(\xi^i)\xi^j(l_k)+l_k(\xi^i)\xi^j(l^k),\\
\delta_j^i&=&\xi_j(\xi^i)=l^k(\xi^i)\xi_j(l_k)+l_k(\xi^i)\xi_j(l^k).
\end{eqnarray*}
In matrix form, that is,
\begin{eqnarray*}
0&=&L^{\ast}(L_{\epsilon}^{\ast})L^{\ast}_{\epsilon}(L)+L(L_{\epsilon}^{\ast})L^{\ast}_{\epsilon}(L^{\ast}),\\
I_{2n\times 2n}&=&L^{\ast}(L_{\epsilon}^{\ast})L_{\epsilon}(L)+L(L_{\epsilon}^{\ast})L_{\epsilon}(L^{\ast}).
\end{eqnarray*}

Since
\begin{eqnarray*}
l_i=\xi^k(l_i)\xi_k+\xi_k(l_i)\xi^k,
\end{eqnarray*}
we have that
\begin{eqnarray*}
\delta_i^j&=&l^j(l_i)=\xi^k(l_i)l^j(\xi_k)+\xi_k(l_i)l^j(\xi^k),\\
0&=&l_j(l_i)=\xi^k(l_i)l_j(\xi_k)+\xi_k(l_i)l_j(\xi^k).
\end{eqnarray*}
In matrix form, that is,
\begin{eqnarray*}
I_{2n\times 2n}&=&L^{\ast}_{\epsilon}(L)L^{\ast}(L_{\epsilon})+L_{\epsilon}(L)L^{\ast}(L^{\ast}_{\epsilon}),\\
0&=&L^{\ast}_{\epsilon}(L)L(L_{\epsilon})+L_{\epsilon}(L)L(L^{\ast}_{\epsilon}).
\end{eqnarray*}

And since
\begin{eqnarray*}
l^i=\xi^k(l^i)\xi_k+\xi_k(l^i)\xi^k,
\end{eqnarray*}
we have that
\begin{eqnarray*}
\delta^i_j&=&l_j(l^i)=\xi^k(l^i)l_j(\xi_k)+\xi_k(l^i)l_j(\xi^k),\\
0&=&l^j(l^i)=\xi^k(l^i)l^j(\xi_k)+\xi_k(l^i)l^j(\xi^k).
\end{eqnarray*}
In matrix form, that is,
\begin{eqnarray*}
I_{2n\times 2n}&=&L^{\ast}_{\epsilon}(L^{\ast})L(L_{\epsilon})+L_{\epsilon}(L^{\ast})L(L^{\ast}_{\epsilon}),\\
0&=&L^{\ast}_{\epsilon}(L^{\ast})L^{\ast}(L_{\epsilon})+L_{\epsilon}(L^{\ast})L^{\ast}(L^{\ast}_{\epsilon}).
\end{eqnarray*}

Combine the above, we have that
\begin{eqnarray*}
\left(\begin{array}{cc}
L(L_{\epsilon}^{\ast})&  L^{\ast}(L_{\epsilon}^{\ast})\\
L(L_{\epsilon}) &  L^{\ast}(L_{\epsilon})
\end{array}\right)
\left(\begin{array}{cc}
L_{\epsilon}(L^{\ast}) &  L_{\epsilon}^{\ast}(L^{\ast})\\
L_{\epsilon}(L)  &   L_{\epsilon}^{\ast}(L)
\end{array}\right)
&=&I_{4n \times 4n},\\
\left(\begin{array}{cc}
L_{\epsilon}(L^{\ast}) &  L_{\epsilon}^{\ast}(L^{\ast})\\
L_{\epsilon}(L)  &   L_{\epsilon}^{\ast}(L)
\end{array}\right)
\left(\begin{array}{cc}
L(L_{\epsilon}^{\ast})&  L^{\ast}(L_{\epsilon}^{\ast})\\
L(L_{\epsilon}) &  L^{\ast}(L_{\epsilon})
\end{array}\right)
&=&I_{4n \times 4n}.
\end{eqnarray*}

Thus,
\begin{eqnarray*}
\left(\begin{array}{cc}
L(L_{\epsilon}^{\ast})&  L^{\ast}(L_{\epsilon}^{\ast})\\
L(L_{\epsilon}) &  L^{\ast}(L_{\epsilon})
\end{array}\right)^{-1}&=&
\left(\begin{array}{cc}
L_{\epsilon}(L^{\ast}) &  L_{\epsilon}^{\ast}(L^{\ast})\\
L_{\epsilon}(L)  &   L_{\epsilon}^{\ast}(L)
\end{array}\right).
\end{eqnarray*}

Since

\begin{eqnarray*}
&&\left(\begin{array}{cc}
I_{2n \times 2n}                           &    0\\
-L(L_{\epsilon})L(L_{\epsilon}^{\ast})^{-1}&  I_{2n \times 2n})
\end{array}\right)
\left(\begin{array}{cc}
L(L_{\epsilon}^{\ast})&  L^{\ast}(L_{\epsilon}^{\ast})\\
L(L_{\epsilon}) &  L^{\ast}(L_{\epsilon})
\end{array}\right)\\
&=&
\left(\begin{array}{cc}
L(L_{\epsilon}^{\ast})&  L^{\ast}(L_{\epsilon}^{\ast})\\
0 &  L^{\ast}(L_{\epsilon})-L(L_{\epsilon})L(L_{\epsilon}^{\ast})^{-1}L^{\ast}(L_{\epsilon}^{\ast})
\end{array}\right),
\end{eqnarray*}

by using the following lemma:
\begin{lemma}\label{inversematrix}
\begin{eqnarray*}
\left(\begin{array}{cc}
A &  C\\
0  &   B
\end{array}\right)^{-1}
=
\left(\begin{array}{cc}
A^{-1} &  -A^{-1}CB^{-1}\\
0  &   B^{-1}
\end{array}\right)
\end{eqnarray*}
\end{lemma}

 and Formula \ref{deformationmatrixform}, \ref{deformationmatrixform02}, we  get that
\begin{eqnarray*}
&&\left(\begin{array}{cc}
L_{\epsilon}(L^{\ast}) &  L_{\epsilon}^{\ast}(L^{\ast})\\
L_{\epsilon}(L)  &   L_{\epsilon}^{\ast}(L)
\end{array}\right)\\
&=&
\left(\begin{array}{cc}
L(L_{\epsilon}^{\ast})&  L^{\ast}(L_{\epsilon}^{\ast})\\
0 &  L^{\ast}(L_{\epsilon})-L(L_{\epsilon})L(L_{\epsilon}^{\ast})^{-1}L^{\ast}(L_{\epsilon}^{\ast})
\end{array}\right)^{-1}
\left(\begin{array}{cc}
I_{2n \times 2n}                           &    0\\
-L(L_{\epsilon})L(L_{\epsilon}^{\ast})^{-1}&  I_{2n \times 2n}
\end{array}\right)\\
&=&
\left(\begin{array}{cc}
L(L_{\epsilon}^{\ast})^{-1}   &  -L(L_{\epsilon}^{\ast})^{-1}  L^{\ast}(L_{\epsilon}^{\ast})  (1-[\epsilon][\epsilon^{\ast}])^{-1}L^{\ast}(L_{\epsilon})^{-1}\\
0 & (1-[\epsilon][\epsilon^{\ast}])^{-1}L^{\ast}(L_{\epsilon})^{-1}
\end{array}\right)
\left(\begin{array}{cc}
I_{2n \times 2n}                           &    0\\
-L(L_{\epsilon})L(L_{\epsilon}^{\ast})^{-1}&  I_{2n \times 2n}
\end{array}\right)\\
&=&
\left(\begin{array}{cc}
L(L_{\epsilon}^{\ast})^{-1}   &  -[\epsilon^{\ast}] (1-[\epsilon ][\epsilon^{\ast}])^{-1}L^{\ast}(L_{\epsilon})^{-1}\\
0 & (1-[\epsilon][ \epsilon^{\ast}])^{-1}L^{\ast}(L_{\epsilon})^{-1}
\end{array}\right)
\left(\begin{array}{cc}
I_{2n \times 2n}                           &    0\\
-L(L_{\epsilon})L(L_{\epsilon}^{\ast})^{-1}&  I_{2n \times 2n}
\end{array}\right)\\
&=&
\left(\begin{array}{cc}
 (1+  [\epsilon^{\ast}] (1-[\epsilon][\epsilon^{\ast}])^{-1}L^{\ast}(L_{\epsilon})^{-1}L(L_{\epsilon}))L(L_{\epsilon}^{\ast})^{-1}
&  -[\epsilon^{\ast}] (1-[\epsilon][\epsilon^{\ast}])^{-1}L^{\ast}(L_{\epsilon})^{-1}\\
-(1-[\epsilon][\epsilon^{\ast}])^{-1}L^{\ast}(L_{\epsilon})^{-1}L(L_{\epsilon})L(L_{\epsilon}^{\ast})^{-1} & (1-[\epsilon][\epsilon^{\ast}])^{-1}L^{\ast}(L_{\epsilon})^{-1}
\end{array}\right)\\
&=&
\left(\begin{array}{cc}
 (1-[\epsilon^{\ast}][\epsilon])^{-1}L(L_{\epsilon}^{\ast})^{-1}
&  -[\epsilon^{\ast}] (1-[\epsilon][\epsilon^{\ast}])^{-1}L^{\ast}(L_{\epsilon})^{-1}\\
-(1-[\epsilon] [\epsilon^{\ast}])^{-1}[\epsilon] L(L_{\epsilon}^{\ast})^{-1} & (1-[\epsilon][\epsilon^{\ast}])^{-1}L^{\ast}(L_{\epsilon})^{-1}
\end{array}\right).
\end{eqnarray*}

The last equivalence since
\begin{eqnarray*}
&&1+  [\epsilon^{\ast}] (1-[\epsilon][ \epsilon^{\ast}])^{-1}L^{\ast}(L_{\epsilon})^{-1}L(L_{\epsilon})\\
&=&1+  [\epsilon^{\ast}] (1-[\epsilon][ \epsilon^{\ast}])^{-1}[\epsilon]
=(1-[\epsilon^{\ast}][\epsilon])^{-1}\\
\Leftrightarrow  1
&=&(1+  [\epsilon^{\ast}] (1-[\epsilon][ \epsilon^{\ast}])^{-1}[\epsilon])
(1-[\epsilon^{\ast}][\epsilon])\\
&=&1+  [\epsilon^{\ast}] (1-[\epsilon][ \epsilon^{\ast}])^{-1}[\epsilon]-[\epsilon^{\ast}][\epsilon]- [\epsilon^{\ast}] (1-[\epsilon ][ \epsilon^{\ast}])^{-1}[\epsilon][\epsilon^{\ast}][\epsilon]\\
\Leftrightarrow   0&=&[\epsilon^{\ast}] (1-[\epsilon][ \epsilon^{\ast}])^{-1}[\epsilon]-[\epsilon^{\ast}][\epsilon]-[\epsilon^{\ast}] (1-[\epsilon][ \epsilon^{\ast}])^{-1}[\epsilon][\epsilon^{\ast}][\epsilon]\\
\Leftrightarrow  1&=& (1-[\epsilon][ \epsilon^{\ast}])^{-1}- (1-[\epsilon][ \epsilon^{\ast}])^{-1}[\epsilon][\epsilon^{\ast}]\\
\Leftrightarrow 1&=& (1-[\epsilon][ \epsilon^{\ast}])^{-1}(1- [\epsilon][\epsilon^{\ast}]).
\end{eqnarray*}

\end{proof}

\begin{remark}

\begin{eqnarray*}
(1-[\epsilon][\epsilon^{\ast}])^{-1}[\epsilon] =[\epsilon] (1-[\epsilon^{\ast}][\epsilon])^{-1}
\end{eqnarray*}

always holds since it is equivalent to
\begin{eqnarray*}
(1-[\epsilon][\epsilon^{\ast}]) [\epsilon]=[\epsilon] (1-[\epsilon^{\ast}][\epsilon])=[\epsilon]-[\epsilon][\epsilon^{\ast}][\epsilon].
\end{eqnarray*}

\end{remark}

\begin{example}
If $X$ is a compact complex manifold and we assume that $dim_{\mathbb{C}}X=1$ for convenience. Then we have the basis
\begin{eqnarray*}
L: \{dz, \frac{\partial}{\partial \bar{z}}\},
L_{\epsilon}: \{d\zeta, \frac{\partial}{\partial \bar{\zeta}}  \};\\
L: \{ \frac{\partial}{\partial z}, d\bar{z}\},
L_{\epsilon}: \{ \frac{\partial}{\partial \zeta},  d\bar{\zeta}\}.
\end{eqnarray*}

Then
\begin{eqnarray*}
d\zeta &=&  \zeta_z dz + (\zeta)_{\bar{z}}d \bar{z}\\
&=&\frac{\partial}{\partial z}( d\zeta) dz + \frac{\partial}{\partial \bar{z}}(d\zeta) d \bar{z},\\
\frac{\partial}{\partial \zeta} &=& z_{\zeta}\frac{\partial}{\partial z}+  (\bar{z})_{ \zeta} \frac{\partial}{\partial \bar{z}}\\
&=& dz(\frac{\partial}{\partial \zeta})\frac{\partial}{\partial z}+  d\bar{z} (\frac{\partial}{\partial \zeta}) \frac{\partial}{\partial \bar{z}}.
\end{eqnarray*}

Thus
\begin{eqnarray*}
\zeta_z=\frac{\partial}{\partial z}( d\zeta),  (\zeta)_{\bar{z}}=\frac{\partial}{\partial \bar{z}}(d\zeta),\\
z_{\zeta}=dz(\frac{\partial}{\partial \zeta}),  (\bar{z})_{ \zeta}=d\bar{z} (\frac{\partial}{\partial \zeta}).
\end{eqnarray*}

\begin{eqnarray*}
\left(\begin{array}{cc}
L(L_{\epsilon}^{\ast})&  L^{\ast}(L_{\epsilon}^{\ast})\\
L(L_{\epsilon}) &  L^{\ast}(L_{\epsilon})
\end{array}\right)^{-1}&=&
\left(\begin{array}{cccc}
    z_{\zeta}      &          0                  &        0                 &     (\bar{z})_{\zeta}\\
0                  &  (\bar{\zeta})_{\bar{z}}   &      (\bar{\zeta})_{z}     &     0\\
0                  &   (\zeta)_{\bar{z}}       &           \zeta _{z}         &     0\\
(z)_{\bar{\zeta}} &             0             &           0                  &       (\bar{z})_{\bar{\zeta}}
\end{array}\right)^{-1}\\
&=&
\left(\begin{array}{cccc}
   \zeta_{z}       &          0                  &        0                 &     (\bar{\zeta})_{z} \\
0                  &   (\bar{z})_{\bar{\zeta}}  &       (\bar{z})_{\zeta}   &     0\\
0                  &  (z)_{\bar{\zeta}}       &         z _{\zeta}          &     0\\
  (\zeta)_{\bar{z}}&             0             &           0                  &       (\bar{\zeta})_{\bar{z}}
\end{array}\right)\\
&=&
\left(\begin{array}{cc}
L_{\epsilon}(L^{\ast}) &  L_{\epsilon}^{\ast}(L^{\ast})\\
L_{\epsilon}(L)  &   L_{\epsilon}^{\ast}(L)
\end{array}\right).
\end{eqnarray*}
\end{example}

\begin{proposition}\label{001}
\begin{eqnarray*}
\xi_i=l^q(\xi_i)(1+\epsilon)( l_q),\\
\xi^i=l_q(\xi^i)(1+\epsilon^{\ast})( l^q).
\end{eqnarray*}
\end{proposition}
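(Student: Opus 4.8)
The plan is to exploit the two structural facts recalled just above the statement, namely $L_{\epsilon}=(1+\epsilon)(L)$ and $L_{\epsilon}^{\ast}=(1+\epsilon^{\ast})(L^{\ast})$, together with the isotropy of the eigenspaces under the pairing $<\cdot,\cdot>$. Since $1+\epsilon$ is an isomorphism from $L$ onto $L_{\epsilon}$, the vectors $(1+\epsilon)(l_1),\ldots,(1+\epsilon)(l_{2n})$ form a basis of $L_{\epsilon}$; as $\xi_i\in L_{\epsilon}$ there exist scalars $c^q$ (depending on $i$) with $\xi_i=c^q(1+\epsilon)(l_q)$. The whole first identity then reduces to identifying $c^q$ with $l^q(\xi_i)$, which I would do simply by pairing both sides against $l^q$.

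First I would record the key directional observation that $\epsilon\in\wedge^2 L^{\ast}$ sends $L$ into $L^{\ast}$: indeed, from the computation $\epsilon(l_p)=\epsilon_{ip}l^i\in L^{\ast}$ obtained above, each basis vector splits as $(1+\epsilon)(l_q)=l_q+\epsilon(l_q)$ with $l_q\in L$ and $\epsilon(l_q)\in L^{\ast}$. Pairing with $l^q\in L^{\ast}$ and using $<L^{\ast},L^{\ast}>=0$ — which holds because $L^{\ast}\cong\overline{L}$ and $<L,L>=0$ forces $<\overline{L},\overline{L}>=0$ by conjugation — the correction term is annihilated, so that $l^q\big((1+\epsilon)(l_p)\big)=l^q(l_p)+l^q(\epsilon(l_p))=\delta^q_p$. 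Applying $l^q$ to $\xi_i=c^p(1+\epsilon)(l_p)$ then yields $l^q(\xi_i)=c^q$, which is exactly $\xi_i=l^q(\xi_i)(1+\epsilon)(l_q)$.

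The second identity follows by the symmetric argument with the roles of $L$ and $L^{\ast}$ interchanged: because $\epsilon^{\ast}\in\wedge^2 L$ sends $L^{\ast}$ into $L$, each $(1+\epsilon^{\ast})(l^q)=l^q+\epsilon^{\ast}(l^q)$ has $L^{\ast}$-part $l^q$ and $L$-part $\epsilon^{\ast}(l^q)$; writing $\xi^i=b_q(1+\epsilon^{\ast})(l^q)$ and pairing against $l_q\in L$, the term $\epsilon^{\ast}(l^q)\in L$ is killed by $<L,L>=0$, giving $l_q(\xi^i)=b_q$ and hence $\xi^i=l_q(\xi^i)(1+\epsilon^{\ast})(l^q)$. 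The only point requiring genuine care — and the content I would flag as the heart of the statement — is this directional behavior of the two Beltrami operators, that $\epsilon$ maps $L$ strictly into $L^{\ast}$ while $\epsilon^{\ast}$ maps $L^{\ast}$ strictly into $L$; it is precisely this that renders the off-diagonal correction invisible to the relevant dual pairing. Everything else is a one-line contraction.
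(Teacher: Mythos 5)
Your argument is correct and is essentially the paper's own proof viewed from the other end: the paper expands $\xi_i$ in the basis $\{l^p,l_q\}$ of $T_X\oplus T_X^{\ast}$ and invokes the component identity $l_j(\xi_i)=\epsilon_{jk}l^k(\xi_i)$ (Formula (2.2)) before regrouping, while you expand $\xi_i$ in the transported basis $(1+\epsilon)(l_q)$ of $L_{\epsilon}$ and identify the coefficients by pairing, using the isotropy $<L^{\ast},L^{\ast}>=0$ to kill the correction term --- the two inputs are equivalent and the computation is the same. Your emphasis on the directional facts $\epsilon(L)\subset L^{\ast}$ and $\epsilon^{\ast}(L^{\ast})\subset L$ correctly isolates what makes the one-line contraction work.
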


\begin{proof}
By Formula \ref{deformation}, we have that
\begin{eqnarray*}
\xi_i&=&l_p(\xi_i)l^p+l^q(\xi_i)l_q\\
&=&\epsilon_{pq}l^q(\xi_i)l^p+l^q(\xi_i)l_q\\
&=&l^q(\xi_i)(\epsilon_{pq}l^p+l_q)\\
&=&l^q(\xi_i)(1+\epsilon)(l_q).
\end{eqnarray*}
And the second formula is the duality of the first one.

\end{proof}

Finally , we list the following lemma which will be used later.
\begin{lemma}[\cite{KW}]\label{holcri}
Let $\epsilon \in \wedge^{2}L^{\ast}$ be an  integrable generalized Beltrami differentials, that is, $d_{L}\epsilon =\frac{1}{2}[\epsilon, \epsilon].$ For any $\sigma \in \wedge ^{\bullet}T^{\ast}_{X}$, we have that
~$$e^{-\epsilon}\cdot d \circ e^{\epsilon} \cdot \sigma=( d + [\partial , \epsilon\cdot]) \circ\sigma,$$
where $[\partial , \epsilon\cdot]:=\partial\circ \epsilon \cdot - \epsilon \cdot \partial \circ.$

\end{lemma}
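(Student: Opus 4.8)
The plan is to prove this operator identity by conjugating $d$ with the one-parameter family $e^{s\epsilon}\cdot$ and differentiating in $s$. Writing $m_\epsilon$ for the operator of Clifford multiplication by $\epsilon$, note that $m_\epsilon$ commutes with itself (as $\epsilon\in\wedge^2 L^*$ is even) and that $e^{s\epsilon}\cdot=e^{sm_\epsilon}$ is a finite sum, since $\epsilon^i\in\wedge^{2i}L^*$ vanishes for $i>n$. Set
\[
F(s):=e^{-s\epsilon}\cdot d\circ e^{s\epsilon}\cdot=e^{-sm_\epsilon}\circ d\circ e^{sm_\epsilon},
\]
so that $F(0)=d$ and the goal is to show $F(1)=d+[\partial,\epsilon\cdot]$. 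Since $m_\epsilon$ commutes with $e^{sm_\epsilon}$, differentiation gives
\[
\frac{d}{ds}F(s)=e^{-sm_\epsilon}\,[d,m_\epsilon]\,e^{sm_\epsilon},
\]
where $[d,m_\epsilon]=d\,m_\epsilon-m_\epsilon\,d$ is the ungraded commutator.

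First I would split $[d,m_\epsilon]=[\partial,m_\epsilon]+[\overline\partial,m_\epsilon]$ and identify the $\overline\partial$-part. Applying the relation of Definition \ref{liederivation} with $a=\epsilon$ yields $\overline\partial(\epsilon\cdot\sigma)=d_L\epsilon\cdot\sigma+\epsilon\cdot\overline\partial\sigma$, hence $[\overline\partial,m_\epsilon]=m_{d_L\epsilon}$. Writing $P:=[\partial,m_\epsilon]$ and $Q:=[\overline\partial,m_\epsilon]=m_{d_L\epsilon}$, we have $[d,m_\epsilon]=P+Q$.

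The crux is the iterated commutator $[[d,m_\epsilon],m_\epsilon]$. Rewriting Lemma \ref{braket} with $a=b=\epsilon$ in operator form gives $m_{[\epsilon,\epsilon]}=2\,m_\epsilon d\,m_\epsilon-m_\epsilon^2 d-d\,m_\epsilon^2$, and expanding $[[d,m_\epsilon],m_\epsilon]=d\,m_\epsilon^2-2\,m_\epsilon d\,m_\epsilon+m_\epsilon^2 d$ shows that the four terms cancel in pairs, so $m_{[\epsilon,\epsilon]}=-[[d,m_\epsilon],m_\epsilon]$. The integrability hypothesis $d_L\epsilon=\tfrac12[\epsilon,\epsilon]$ turns this into $[[d,m_\epsilon],m_\epsilon]=-2\,m_{d_L\epsilon}=-2Q$, equivalently $\mathrm{ad}_{m_\epsilon}([d,m_\epsilon])=2Q$. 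Moreover the series of iterated commutators truncates here: since $d_L\epsilon\in\wedge^3 L^*$ and $\epsilon\in\wedge^2 L^*$ Clifford-commute (the sign is $(-1)^{3\cdot2}=1$), we get $[Q,m_\epsilon]=0$, whence $\mathrm{ad}_{m_\epsilon}^2([d,m_\epsilon])=2[m_\epsilon,Q]=0$.

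Feeding this into the expansion $e^{-sm_\epsilon}X e^{sm_\epsilon}=\sum_{k\ge0}\frac{(-s)^k}{k!}\mathrm{ad}_{m_\epsilon}^k(X)$ with $X=[d,m_\epsilon]$, which now retains only its $k=0$ and $k=1$ terms, yields
\[
\frac{d}{ds}F(s)=(P+Q)-2sQ=P+(1-2s)Q.
\]
Integrating over $s\in[0,1]$ and using $\int_0^1(1-2s)\,ds=0$ gives $F(1)=F(0)+P=d+[\partial,\epsilon\cdot]$, as claimed. I expect the main obstacle to be the crux step: verifying that the four Clifford terms of Lemma \ref{braket} reassemble exactly into $-[[d,m_\epsilon],m_\epsilon]$, and that integrability is precisely what collapses the $\mathrm{ad}_{m_\epsilon}$-expansion to a single surviving term whose $s$-integral is zero; the differentiation and integration around it are routine.
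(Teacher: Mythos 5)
Your proof is correct, but there is nothing in the paper to measure it against: Lemma \ref{holcri} is imported from \cite{KW} without proof, so I can only judge your argument on its own terms, and it holds up. The two nontrivial inputs are exactly the right ones and both are available inside this paper: the identity $[\overline\partial,\epsilon\cdot]=(d_L\epsilon)\cdot$ is the Leibniz rule of Definition \ref{liederivation} at $k=2$, and the operator identity $[\epsilon,\epsilon]\cdot=-[[d,\epsilon\cdot],\epsilon\cdot]$ is a correct reading of Lemma \ref{braket} at $a=b=\epsilon$, which integrability turns into $[[d,\epsilon\cdot],\epsilon\cdot]=-2(d_L\epsilon)\cdot$. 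The truncation of the $\mathrm{ad}$-series at the next step is justified as you say, since $d_L\epsilon\in\wedge^{3}L^{\ast}$ and $\epsilon\in\wedge^{2}L^{\ast}$ Clifford-commute (sign $(-1)^{3\cdot 2}=1$), and all exponentials here are polynomials in $\epsilon\cdot$, so the analysis is trivial. Your one-parameter family $F(s)$ with the observation $\int_0^1(1-2s)\,ds=0$ is an equivalent but slightly roundabout packaging of the direct adjoint expansion
\[
e^{-\epsilon}\cdot d\circ e^{\epsilon}\cdot=d-[\epsilon\cdot,d]+\tfrac12\,[\epsilon\cdot,[\epsilon\cdot,d]]=d+[d,\epsilon\cdot]-(d_L\epsilon)\cdot=d+[\partial,\epsilon\cdot],
\]
which is the standard route for such conjugation formulas (cf.\ \cite{Ra2}) and is presumably what \cite{KW} contains; evaluating the series at once would spare you the differentiation and integration in $s$. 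Either way, the mechanism you identify --- integrability converting the second iterated commutator into $-2[\overline\partial,\epsilon\cdot]$, which then cancels the $\overline\partial$-part of $[d,\epsilon\cdot]$ --- is exactly the content of the lemma.
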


\subsection{$\partial \overline\partial $-Lemma} In this subsection, we discuss some propositions about $\partial \overline\partial $-lemma on $X$.

\begin{definition}[$\partial \overline\partial $-Lemma]
We say $(X,J)$ satisfies the $\partial \overline\partial $-Lemma if
$$Im(\partial)\cap Ker(\overline\partial)=Im(\overline\partial)\cap Ker(\partial)=Im(\partial \overline\partial).$$
\end{definition}

 Now we give some weaker definitions:
\begin{definition}
If for any $\partial \varphi \in U^k(X)$ with $\overline\partial \partial \varphi=0$, the equation ~$$\overline\partial \sigma=\partial \varphi$$
 has a solution $\sigma \in U^{k-1}(X)$ ~(a $\partial$ -exact solution $\sigma :=\partial \sigma_1\in U^{k-1}(X)$), we denote as $X \in \mathbb{S}^k$
 ~($X \in \mathbb{B}^k$);
 similarly, if for any $ \varphi \in U^{k+1}(X)$ with $\overline\partial  \varphi=0$, the equation ~$$\overline\partial \sigma=\partial \varphi$$
 has a solution $\sigma \in U^{k-1}(X)$ ~(a $\partial$ -exact solution $\sigma :=\partial \sigma_1\in U^{k-1}(X)$), we denote as $X \in \mathcal{S}^k$
 ~($X \in \mathcal{B}^k$).

\end{definition}
Thus, we have the following relationship between the two definitions above:
\begin{lemma}\label{dd}
If $X$ satisfies $\partial \overline\partial $-lemma, we have that $X\in \mathbb{B}^k$ for any $-n\le k\le n $.

\end{lemma}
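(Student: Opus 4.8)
The plan is to unwind the two definitions and feed the hypothesis straight into the $\partial\overline\partial$-Lemma. Fix $k$ with $-n\le k\le n$ and take an arbitrary element satisfying the hypothesis defining $\mathbb{B}^k$: an element $\partial\varphi\in U^k(X)$ (so $\varphi\in U^{k+1}(X)$) with $\overline\partial\partial\varphi=0$. The first observation is that such an element lies both in $Im(\partial)$, trivially, and in $Ker(\overline\partial)$, by the hypothesis $\overline\partial\partial\varphi=0$. Hence $\partial\varphi\in Im(\partial)\cap Ker(\overline\partial)$, which is exactly the set the $\partial\overline\partial$-Lemma identifies with $Im(\partial\overline\partial)$.

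Applying the $\partial\overline\partial$-Lemma therefore produces some $\psi$ with $\partial\varphi=\partial\overline\partial\psi$. Since $\partial$ and $\overline\partial$ are homogeneous for the $U$-grading (lowering and raising the degree by one respectively), I would project $\psi$ onto its $U^k(X)$-component and assume $\psi\in U^k(X)$ without loss of generality, so that $\partial\overline\partial\psi\in U^k(X)$ matches the degree of $\partial\varphi$.

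The last step converts $\partial\overline\partial$ into $\overline\partial\partial$. From $d^2=0$ together with $d=\partial+\overline\partial$ and the bidegree bookkeeping (the $U^{k-2}$, $U^k$ and $U^{k+2}$ components of $d^2=0$ applied to $U^k(X)$ give $\partial^2=0$, $\partial\overline\partial+\overline\partial\partial=0$ and $\overline\partial^2=0$ separately) one obtains the anticommutation $\partial\overline\partial=-\overline\partial\partial$. Thus $\partial\varphi=\partial\overline\partial\psi=-\overline\partial\partial\psi=\overline\partial\partial(-\psi)$, so setting $\sigma_1:=-\psi\in U^k(X)$ and $\sigma:=\partial\sigma_1\in U^{k-1}(X)$ yields a $\partial$-exact solution of $\overline\partial\sigma=\partial\varphi$. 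This is precisely the defining condition for $X\in\mathbb{B}^k$.

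I do not expect a genuine obstacle here: once the hypothesis is recognized as membership in $Im(\partial)\cap Ker(\overline\partial)$, the conclusion is a formal consequence of the $\partial\overline\partial$-Lemma together with the anticommutation of $\partial$ and $\overline\partial$. The only points requiring care are the degree bookkeeping (keeping $\sigma\in U^{k-1}(X)$ and $\sigma_1\in U^k(X)$) and the sign in $\partial\overline\partial=-\overline\partial\partial$, both of which are routine.
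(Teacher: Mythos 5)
Your proof is correct and is the intended argument; the paper in fact states this lemma without any proof, treating it as an immediate consequence of the definitions. Both of the points you flag are handled properly: the projection of $\psi$ onto its $U^k(X)$-component is legitimate because $\partial\overline\partial$ preserves the $U^k$-grading, and the anticommutation $\partial\overline\partial=-\overline\partial\partial$ follows from $d^2=0$ and $d=\partial+\overline\partial$ exactly as you describe (and is used elsewhere in the paper, e.g.\ in the orthogonality computations for the Bott--Chern decomposition).
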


Now , we give the following lemmas which will be used in the  paper later.
\begin{proposition}\label{eqn02}

Let $(X,G)$ be a compact generalized Hermitian  manifold. If the $\partial\overline\partial$-equation

\begin{eqnarray}\label{eqn01}
\partial\overline\partial x=y
\end{eqnarray}

has a solution, then
$x=(\partial\overline\partial)^{\ast}G_{BC}y$ is also a solution which has the minimum $L^2$-norm with respect to the Born-Infeld inner product definite by Formula \ref{BIinnerproduct}.

\end{proposition}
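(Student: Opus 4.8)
The plan is to exploit the Bott-Chern Hodge decomposition $I=\mathbb{H}_{BC}+\Delta_{BC}G_{BC}$ together with the commutation relations of Lemma \ref{laplacianbca}, reducing the full six-term Bott-Chern Laplacian to its single term $(\partial\overline\partial)(\partial\overline\partial)^{\ast}$ on the subspace $\mathrm{Im}(\partial\overline\partial)$. The verification that $x=(\partial\overline\partial)^{\ast}G_{BC}y$ solves the equation then becomes a direct manipulation of the decomposition, and minimality follows from an orthogonality argument.

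First I would record the elementary consequences of $d^2=0$ and the bidegree splitting $\wedge^{\bullet}T_X^{\ast}=\bigoplus_k U^k(X)$, namely $\partial^2=0$, $\overline\partial^2=0$ and $\partial\overline\partial=-\overline\partial\partial$. In particular, for any $w=\partial\overline\partial z\in\mathrm{Im}(\partial\overline\partial)$ one has $\partial w=0$ and $\overline\partial w=0$. The main computational step is then the claim that $\Delta_{BC}$ coincides with $(\partial\overline\partial)(\partial\overline\partial)^{\ast}$ on $\mathrm{Im}(\partial\overline\partial)$: applying the definition of $\Delta_{BC}$ to such a $w$, each of the remaining five summands $(\partial\overline\partial)^{\ast}(\partial\overline\partial)$, $(\overline\partial^{\ast}\partial)(\overline\partial^{\ast}\partial)^{\ast}=(\overline\partial^{\ast}\partial)(\partial^{\ast}\overline\partial)$, $(\overline\partial^{\ast}\partial)^{\ast}(\overline\partial^{\ast}\partial)$, $\overline\partial^{\ast}\overline\partial$ and $\partial^{\ast}\partial$ has a factor $\partial w$ or $\overline\partial w$ applied first, hence annihilates $w$. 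This leaves $\Delta_{BC}w=(\partial\overline\partial)(\partial\overline\partial)^{\ast}w$.

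Next comes the solving step. Since $\partial\overline\partial x=y$ is assumed solvable, write $y=\partial\overline\partial x_0$, so $y\in\mathrm{Im}(\partial\overline\partial)$; by the orthogonal Bott-Chern decomposition $\wedge^{\bullet}T_X^{\ast}=\mathrm{Ker}\,\Delta_{BC}\oplus\mathrm{Im}(\partial\overline\partial)\oplus(\mathrm{Im}\,\partial^{\ast}+\mathrm{Im}\,\overline\partial^{\ast})$ this forces $\mathbb{H}_{BC}y=0$, whence $y=\Delta_{BC}G_{BC}y$. Using $G_{BC}(\partial\overline\partial)=(\partial\overline\partial)G_A$ from Lemma \ref{laplacianbca}, I would observe that $G_{BC}y=G_{BC}\partial\overline\partial x_0=\partial\overline\partial G_A x_0\in\mathrm{Im}(\partial\overline\partial)$, so the reduction above applies to $w=G_{BC}y$ and gives
\[
y=\Delta_{BC}G_{BC}y=(\partial\overline\partial)(\partial\overline\partial)^{\ast}G_{BC}y=\partial\overline\partial\big((\partial\overline\partial)^{\ast}G_{BC}y\big),
\]
so $x=(\partial\overline\partial)^{\ast}G_{BC}y$ is a genuine solution.

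For minimality, note that $x\in\mathrm{Im}\big((\partial\overline\partial)^{\ast}\big)$, which is orthogonal to $\mathrm{Ker}(\partial\overline\partial)$ since $((\partial\overline\partial)^{\ast}u,v)=(u,\partial\overline\partial v)=0$ whenever $\partial\overline\partial v=0$. Any other solution $x'$ satisfies $\partial\overline\partial(x'-x)=0$, so $x'-x\in\mathrm{Ker}(\partial\overline\partial)$, and the Pythagorean identity gives $\|x'\|^2=\|x\|^2+\|x'-x\|^2\ge\|x\|^2$, with equality iff $x'=x$. The hard part is the computational reduction of $\Delta_{BC}$ to a single term on $\mathrm{Im}(\partial\overline\partial)$ together with the companion fact that $G_{BC}$ maps $\mathrm{Im}(\partial\overline\partial)$ into itself; both hinge on $\partial$- and $\overline\partial$-closedness of exact forms and on the commutation relation $G_{BC}\partial\overline\partial=\partial\overline\partial G_A$, which is exactly where Lemma \ref{laplacianbca} is indispensable.
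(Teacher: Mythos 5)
Your proof is correct, but it runs dual to the paper's. The paper starts from an arbitrary solution $x$ and applies the Aeppli decomposition $x=x_1+x_2+x_3$ with $x_1\in \mathrm{Ker}\,\Delta_{A}$, $x_2\in \mathrm{Im}\,\partial+\mathrm{Im}\,\overline\partial$, $x_3\in \mathrm{Im}\,(\partial\overline\partial)^{\ast}$; only $x_3$ contributes to $\partial\overline\partial x$, and since $\partial^{\ast}x_3=\overline\partial^{\ast}x_3=0$ the Aeppli Laplacian collapses to its single term on $x_3$, giving $(\partial\overline\partial)^{\ast}y=\Delta_A x_3$, hence $x_3=G_A(\partial\overline\partial)^{\ast}y=(\partial\overline\partial)^{\ast}G_{BC}y$ by the last identity of Lemma \ref{laplacianbca}; minimality is then read off from $\|x\|^2=\|x_1\|^2+\|x_2\|^2+\|x_3\|^2\ge\|x_3\|^2$. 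You instead work on the $y$ side: you collapse $\Delta_{BC}$ to $(\partial\overline\partial)(\partial\overline\partial)^{\ast}$ on $\mathrm{Im}(\partial\overline\partial)$, use $\mathbb{H}_{BC}y=0$ together with $G_{BC}\partial\overline\partial=\partial\overline\partial G_A$ (to keep $G_{BC}y$ inside $\mathrm{Im}(\partial\overline\partial)$), and verify directly that the candidate solves the equation; minimality comes from the orthogonality of $\mathrm{Im}\bigl((\partial\overline\partial)^{\ast}\bigr)$ and $\mathrm{Ker}(\partial\overline\partial)$ plus the Pythagorean identity. The two arguments are mirror images (one collapses $\Delta_A$ on $\mathrm{Ker}\,\partial^{\ast}\cap\mathrm{Ker}\,\overline\partial^{\ast}$, the other collapses $\Delta_{BC}$ on $\mathrm{Im}(\partial\overline\partial)$, and each invokes one of the two commutation identities $G_{BC}\partial\overline\partial=\partial\overline\partial G_A$, $(\partial\overline\partial)^{\ast}G_{BC}=G_A(\partial\overline\partial)^{\ast}$ from Lemma \ref{laplacianbca}). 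Your route avoids the Aeppli decomposition entirely, makes explicit the sign relations $\partial^2=\overline\partial^2=0$, $\partial\overline\partial=-\overline\partial\partial$ that the paper uses tacitly, and yields uniqueness of the minimizer; the paper's route has the merit of exhibiting the minimal solution as an explicit orthogonal component of any given solution. Both are complete.
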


\begin{proof}
Let $x$ be a solution of Equation \ref{eqn01}. We can decomposition $x$ by
\begin{eqnarray*}
x=x_1+x_2+x_3,
\end{eqnarray*}

where $x_1 \in Ker \Delta_{A},  x_2 \in Im \partial + Im \overline\partial, x_3 \in Im (\partial\overline\partial)^{\ast}$.

Then
\begin{eqnarray*}
\partial\overline\partial x_1&=&0,
\partial\overline\partial x_2=0,
\partial\overline\partial x=\partial\overline\partial x_3=y.
\end{eqnarray*}

Also, since $x_3 \in Im (\partial\overline\partial)^{\ast}$, we have that
\begin{eqnarray*}
\partial^{\ast} x_3&=&\overline\partial^{\ast}x_3=0,.
\end{eqnarray*}

Then

\begin{eqnarray*}
(\partial\overline\partial)^{\ast}y&=& (\partial\overline\partial)^{\ast}(\partial\overline\partial)x_3=\Delta_{A} x_3,\\
G_{A}(\partial\overline\partial)^{\ast}y&=&G_{A}\Delta_{A} x_3 =x_3. ~~(x_3 \in Im (\partial\overline\partial)^{\ast})
\end{eqnarray*}

Thus

\begin{eqnarray*}
x_3&=&G_{A}(\partial\overline\partial)^{\ast}y\\
&=&(\partial\overline\partial)^{\ast} G_{BC}y.
\end{eqnarray*}

Also, for any solution $x,$ we have that
\begin{eqnarray*}
||x||^2&=& ||x_1||^2+||x_2||^2+||x_3||^2 \ge ||x_3||^2=||(\partial\overline\partial)^{\ast} G_{BC}y||^2.
\end{eqnarray*}

\end{proof}

\begin{proposition}\label{d-closed representation}

Let $(X,G)$ be a compact generalized Hermitian   manifold and assume that $X\in \mathbb{B}^{k-1}$. For any $[\sigma] \in H_{\overline\partial }^k (X)$. Choose a representation of $[\sigma]$ and
we still denote it as $\sigma \in H_{\overline\partial }^k (X)$.
Then  there exists some $\beta_{\sigma} \in U^{k-1}(X)$,  such that $\gamma _{\sigma} := \sigma +\overline\partial \beta_{\sigma}$ is also a representation of $[\sigma]$ with $d \gamma _{\sigma}=0$.

\end{proposition}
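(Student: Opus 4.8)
The plan is to use the decomposition $d = \partial + \overline\partial$, valid by integrability of $J$ as recalled after Definition \ref{tdd}, together with the identities $\partial^2 = 0$, $\overline\partial^2 = 0$ and $\partial\overline\partial + \overline\partial\partial = 0$ that follow from $d^2 = 0$. For any choice of $\beta_\sigma$, the form $\gamma_\sigma = \sigma + \overline\partial\beta_\sigma$ is automatically $\overline\partial$-closed and $\overline\partial$-cohomologous to $\sigma$, so it already represents $[\sigma] \in H_{\overline\partial}^k(X)$. Hence the requirement $d\gamma_\sigma = 0$ reduces to arranging $\partial\gamma_\sigma = 0$, that is, to solving
$$\partial\overline\partial\beta_\sigma = -\partial\sigma$$
for some $\beta_\sigma \in U^{k-1}(X)$.

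First I would record that, since $\sigma \in U^k(X)$, we have $\partial\sigma \in U^{k-1}(X)$, and that $\partial\sigma$ satisfies the compatibility condition $\overline\partial\partial\sigma = -\partial\overline\partial\sigma = 0$, using $\overline\partial\sigma = 0$. This is precisely the hypothesis in the definition of $\mathbb{B}^{k-1}$, applied with the choice $\varphi := \sigma \in U^k(X)$: indeed $\partial\varphi = \partial\sigma \in U^{k-1}(X)$ and $\overline\partial\partial\varphi = 0$.

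Invoking $X \in \mathbb{B}^{k-1}$ then produces a $\partial$-exact solution of $\overline\partial(\cdot) = \partial\sigma$; concretely, there exists $\beta_\sigma \in U^{k-1}(X)$ with $\overline\partial\partial\beta_\sigma = \partial\sigma$. Using $\partial\overline\partial = -\overline\partial\partial$ this gives $\partial\overline\partial\beta_\sigma = -\overline\partial\partial\beta_\sigma = -\partial\sigma$, whence $\partial\gamma_\sigma = \partial\sigma + \partial\overline\partial\beta_\sigma = 0$. Combined with $\overline\partial\gamma_\sigma = \overline\partial\sigma + \overline\partial^2\beta_\sigma = 0$, this yields $d\gamma_\sigma = 0$, while $\gamma_\sigma = \sigma + \overline\partial\beta_\sigma$ represents the same class in $H_{\overline\partial}^k(X)$, as required.

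The argument is a direct application of the definition of $\mathbb{B}^{k-1}$, so there is no serious obstacle. The only point demanding care is the bookkeeping of the Clifford degrees: one must check that the choice $\varphi = \sigma$ lands in the correct space $U^k(X)$, so that $\partial\sigma \in U^{k-1}(X)$ matches the superscript $k-1$ in the hypothesis, and one must verify the compatibility condition $\overline\partial\partial\sigma = 0$ that licenses the use of the definition.
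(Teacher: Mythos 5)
Your proof is correct and takes essentially the same route as the paper: both reduce $d\gamma_{\sigma}=0$ to solving $\partial\overline\partial\beta_{\sigma}=-\partial\sigma$ and invoke $X\in\mathbb{B}^{k-1}$ (with $\varphi=\sigma$, using $\overline\partial\partial\sigma=-\partial\overline\partial\sigma=0$) to obtain a $\partial$-exact solution $\partial\beta_{\sigma}$ of $\overline\partial(\cdot)=\partial\sigma$. The only difference is that the paper additionally pins down the canonical minimal-norm choice $\beta_{\sigma}=-(\partial\overline\partial)^{\ast}G_{BC}\partial\sigma$ via Proposition \ref{eqn02}, which is not needed for the bare existence claim but is used later in the elliptic estimates.
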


\begin{proof}
If $\gamma _{\sigma}$ exists, we can get that there exists some $\beta_{\sigma} \in U^{k-1}(X) $ such that ~$$\gamma _{\sigma}=\sigma + \overline\partial \beta_{\sigma}.$$
Thus,
\begin{eqnarray*}
0&=&\partial \gamma _{\sigma}~(d \gamma _{\sigma}=0)\\
&=&\partial \sigma + \partial\overline\partial \beta_{\sigma}.
\end{eqnarray*}
That is
\begin{eqnarray*}
\partial \sigma =- \partial\overline\partial \beta_{\sigma}.
\end{eqnarray*}
So by the assumption that $X\in \mathbb{B}^{k-1}$ and Proposition \ref{eqn02} above, we can get a solution $\beta_{\sigma}=-(\partial \overline\partial )^{\ast} G_{BC} \partial \sigma \in U^{k-1}(X)$
and $\gamma _{\sigma} = \sigma +\overline\partial \beta_{\sigma}$ is desired.
\end{proof}

\section{Criterion for holomorphism}

Let $\pi: \mathcal{X} \to \Delta \subset \mathbb{C}^1 $ be a smooth  family of generalized complex manifolds  where the centre manifold $\pi^{-1}(0):=X_0:=X$ is a compact  generalized Hermitian manifold.
In this section , we get a criterion formula for whether a differential form is holomorphic with respect to the new generalized complex structure $(X_{\epsilon}, J_{\epsilon})$ induced by $\epsilon$. The method we use is parallel to
that in \cite{RaoWanZhao03}.
We first define a map between $\wedge ^k L^{\ast} \cdot U^{-n}(X) $ and   $ \wedge ^k L^{\ast}_{\epsilon} \cdot U^{-n}(X_{\epsilon})$.
\begin{definition}
\begin{eqnarray*}
\mathcal{E}: \wedge ^k L^{\ast} \cdot U^{-n}(X)    &\to&  \wedge ^k L^{\ast}_{\epsilon} \cdot U^{-n}(X_{\epsilon}) ,\\
\sigma  &\mapsto & \mathcal{E}(\sigma)
\end{eqnarray*}
where  \begin{eqnarray*}
\sigma &:=&\frac{1}{k!}\sigma_{i_1\cdots i_k}l^{i_1}\cdot \cdots \cdot l^{i_k}\cdot \rho_0,~~l^{i_{\alpha}}\in L^{\ast},\rho_0\in U^{-n}(X),\\
 \mathcal{E}(\sigma)&:=& \frac{1}{k!}\sigma_{i_1\cdots i_k} (1+\epsilon^{\ast})(l^{i_1})\cdot \cdots \cdot (1+\epsilon^{\ast})(l^{i_k})\cdot (e^{\epsilon}\cdot \rho_0),\\
   e^{\epsilon}\cdot \rho_0&:=& \sum_{i\ge 0} \frac{1}{i!}\epsilon^i \cdot \rho_0, ~\epsilon^i:=\overbrace{\epsilon\cdot \epsilon \cdot \cdots \cdot \epsilon \cdot}^i.\\
\end{eqnarray*}
\end{definition}

About $e^{\epsilon}$ and $\mathcal{E}$, we have the following two propositions.
\begin{proposition}\label{iso01}
\begin{eqnarray*}
e^{\epsilon}\cdot : ( L\oplus L^{\ast} )\cdot \wedge ^{\bullet}T^{\ast}_X  &\to& \wedge ^{\bullet}T^{\ast}_X,\\
l\cdot \rho &\mapsto &e^{\epsilon}\cdot (l\cdot \rho)= (1+\epsilon)(l) \cdot (e^{\epsilon}\cdot \rho)\\
e^{\epsilon^{\ast}}\cdot : ( L\oplus L^{\ast} )\cdot \wedge ^{\bullet}T^{\ast}_X  &\to& \wedge ^{\bullet}T^{\ast}_X.\\
l\cdot \rho &\mapsto &e^{\epsilon^{\ast}}\cdot (l\cdot \rho)= (1+\epsilon^{\ast})(l) \cdot (e^{\epsilon^{\ast}}\cdot \rho)
\end{eqnarray*}

\end{proposition}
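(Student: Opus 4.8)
The plan is to recognize both $e^{\epsilon}\cdot$ and $l\cdot$ as \emph{left} Clifford multiplications on $\wedge^{\bullet}T_X^{\ast}$, so that the asserted identity $e^{\epsilon}\cdot(l\cdot\rho)=(1+\epsilon)(l)\cdot(e^{\epsilon}\cdot\rho)$ is equivalent to the operator relation $e^{\epsilon}\cdot l=(1+\epsilon)(l)\cdot e^{\epsilon}$, i.e. to the conjugation formula $e^{\epsilon}\cdot l\cdot e^{-\epsilon}=(1+\epsilon)(l)$. Since $\epsilon\in\wedge^{2}L^{\ast}$ satisfies $\epsilon^{i}\in\wedge^{2i}L^{\ast}=0$ for $i>n$, the series $e^{\epsilon}$ is a finite element of the Clifford algebra and $e^{-\epsilon}$ is its genuine two-sided inverse there (they lie in the commutative subalgebra generated by $\epsilon$), so this reformulation is legitimate and carries no convergence issue.

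The key step is the commutator computation, which I would split according to whether $l\in L$ or $l\in L^{\ast}$. For $l=l_{p}\in L$, writing $\epsilon=\tfrac12\epsilon_{ij}l^{i}\cdot l^{j}$ and repeatedly applying the Clifford relation (\ref{wedge}) in the forms $l^{i}\cdot l_{p}+l_{p}\cdot l^{i}=l^{i}(l_{p})=\delta^{i}_{p}$ and $l^{i}\cdot l^{j}+l^{j}\cdot l^{i}=0$, I expect to push $l_{p}$ to the left through both factors of $\epsilon$ and obtain $\epsilon\cdot l_{p}=\epsilon_{ip}l^{i}+l_{p}\cdot\epsilon$, that is $[\epsilon,l_{p}]=\epsilon(l_{p})\in L^{\ast}$, matching the formula $\epsilon(l_{p})=\epsilon_{ip}l^{i}$ already recorded in the deformation subsection. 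For $l=l^{q}\in L^{\ast}$, since $\epsilon$ is even and $l^{q}$ odd in the exterior algebra $\wedge^{\bullet}L^{\ast}=\wedge^{\bullet}\overline{L}$, they Clifford-commute, so $[\epsilon,l^{q}]=0$; this is consistent with $(1+\epsilon)(l^{q})=l^{q}$ because $\epsilon(l^{q})=0$ by $<L^{\ast},L^{\ast}>=0$.

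With these in hand, the crucial observation is that $\operatorname{ad}_{\epsilon}$ is nilpotent of order two on $L\oplus L^{\ast}$: for $l_{p}\in L$ one has $\operatorname{ad}_{\epsilon}^{2}(l_{p})=[\epsilon,\epsilon(l_{p})]=0$, again because $\epsilon(l_{p})\in L^{\ast}$ commutes with $\epsilon$. Hence the standard associative-algebra expansion $e^{\epsilon}\cdot l\cdot e^{-\epsilon}=\sum_{k\ge0}\tfrac1{k!}\operatorname{ad}_{\epsilon}^{k}(l)$ truncates to $l+\epsilon(l)=(1+\epsilon)(l)$, which is exactly the sought conjugation formula; multiplying on the right by $e^{\epsilon}\cdot\rho$ yields the first assertion. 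The second formula follows verbatim with the roles of $L$ and $L^{\ast}$ interchanged, replacing $\epsilon\in\wedge^{2}L^{\ast}$ by $\epsilon^{\ast}\in\wedge^{2}L$ and using $[\epsilon^{\ast},l^{q}]=\epsilon^{\ast}(l^{q})$ for $l^{q}\in L^{\ast}$ together with $\epsilon^{\ast}(l_{p})=0$ for $l_{p}\in L$.

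The main obstacle is purely the bookkeeping in the first commutator: one must move $l_{p}$ past the two generators $l^{i}\cdot l^{j}$, track the two contraction terms produced, and check that they combine, via the antisymmetry $\epsilon_{ij}=-\epsilon_{ji}$, into the single contraction $\epsilon(l_{p})=\epsilon_{ip}l^{i}$, while the triple-product term $l_{p}\cdot\epsilon$ cancels against $l_{p}\cdot\epsilon$ when forming $\epsilon\cdot l_{p}-l_{p}\cdot\epsilon$. Everything beyond this is formal: no analytic estimate is required, since $\epsilon$ acts nilpotently and $e^{\epsilon}$ is a finite element of the Clifford algebra.
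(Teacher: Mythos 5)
Your proposal is correct and rests on exactly the same core computation as the paper: the single Clifford commutator identity $\epsilon\cdot l\cdot\rho=\epsilon(l)\cdot\rho+l\cdot\epsilon\cdot\rho$ together with the vanishing $\epsilon(\epsilon(l))=0$ (since $\epsilon(l)\in L^{\ast}$ and $\epsilon(L^{\ast})=0$). The only difference is presentational: the paper expands $\frac{1}{k!}\epsilon^{k}\cdot l\cdot\rho$ by induction on $k$ and sums, whereas you package the same two facts into the conjugation formula $e^{\epsilon}\cdot l\cdot e^{-\epsilon}=e^{\operatorname{ad}_{\epsilon}}(l)=l+\epsilon(l)$ using $\operatorname{ad}_{\epsilon}^{2}=0$; your appeal to $e^{-\epsilon}$ being a two-sided inverse is harmless since you justify it independently (finite commuting powers of $\epsilon$), and the paper proves the same invertibility separately in Proposition \ref{isoeE}.
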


\begin{proof}
Let $\epsilon:=a\cdot b \in \wedge^2 L^{\ast}.$
By Formula \ref{wedge}, we have that
\begin{eqnarray*}
\epsilon \cdot l \cdot \rho &:=& a\cdot b\cdot l\cdot \rho\\
&=&a \cdot (b(l) \rho  -l\cdot b\cdot \rho)\\
&=&b(l) a \cdot \rho  -a(l)  b \cdot \rho + l\cdot a \cdot b\cdot \rho\\
&:=&\epsilon(l) \cdot \rho + l\cdot \epsilon \cdot \rho.
\end{eqnarray*}

Also, we have that $\epsilon(\epsilon((L\oplus L^{\ast}))= \epsilon(\epsilon (L))\subset \epsilon (L^{\ast})=0.$
So we have that
\begin{eqnarray*}
\frac{1}{2!}\epsilon ^2 \cdot l\cdot \rho &=&\frac{1}{2!}\epsilon \cdot (\epsilon(l)\cdot \rho + l\cdot \epsilon \cdot \rho)\\
&=&\frac{1}{2!}(\epsilon(\epsilon(l))\cdot \rho+\epsilon(l)\cdot\epsilon \cdot \rho+\epsilon(l) \cdot \epsilon \cdot \rho+ l\cdot \epsilon^2 \cdot \rho)\\
&=&\epsilon(l)\cdot\epsilon \cdot \rho+\frac{1}{2!}l\cdot \epsilon^2 \cdot \rho
\end{eqnarray*}

By induction on $k$, if

\begin{eqnarray*}
\frac{1}{k!}\epsilon ^k \cdot l\cdot \rho=\frac{1}{(k-1)!}\epsilon(l)\cdot\epsilon^{k-1} \cdot \rho+\frac{1}{k!}l\cdot \epsilon^k \cdot \rho,
\end{eqnarray*}

then
\begin{eqnarray*}
\frac{1}{(k+1)!}\epsilon ^{k+1} \cdot l\cdot \rho&=&\frac{1}{k+1}\epsilon \cdot (
\frac{1}{(k-1)!}\epsilon(l)\cdot\epsilon^{k-1} \cdot \rho+\frac{1}{k!}l\cdot \epsilon^k \cdot \rho),\\
&=&\frac{1}{(k-1)!(k+1)}\epsilon(\epsilon(l))\cdot\epsilon^{k-1} \cdot \rho +\frac{1}{(k-1)!(k+1)}\epsilon(l) \cdot \epsilon^k \cdot
\rho\\&&
 + \frac{1}{(k+1)!}l\cdot \epsilon^{k+1}\cdot \rho+ \frac{1}{(k+1)!}\epsilon(l)\cdot \epsilon^k \cdot \rho\\
&=&\frac{1}{(k+1)!}l\cdot \epsilon^{k+1}\cdot \rho+ \frac{1}{k!}\epsilon(l)\cdot \epsilon^k \cdot \rho.
\end{eqnarray*}

Thus \begin{eqnarray*}
e^{\epsilon}\cdot l\cdot \rho &=&(1+\epsilon)(l) \cdot (e^{\epsilon}\cdot \rho).
\end{eqnarray*}

The proof of the second formula is similar to the first one.

\end{proof}

Thus, by Proposition \ref{iso01} above and $\epsilon(L^{\ast})=\epsilon^{\ast}(L)=0, $  we have the following equivalent definition of $\mathcal{E}$:
\begin{eqnarray*}
\mathcal{E}: \wedge ^k (L \oplus L^{\ast}) \cdot  \wedge^{\bullet}T_X^{\ast}    &\to&  \wedge ^k( L_{\epsilon} \oplus L^{\ast}_{\epsilon}) \cdot \wedge^{\bullet}T_{X_{\epsilon}}^{\ast} ,\\
\sigma  &\mapsto & \mathcal{E}(\sigma)
\end{eqnarray*}
where  \begin{eqnarray*}
\sigma &:=&\frac{1}{k!}\sigma_{i_1\cdots i_k}(l^{i_1}+l_{i_1})\cdot \cdots \cdot (l^{i_k}+l_{i_k})\cdot \rho_1,~~l_{i_{\alpha}}\in L , ~l^{i_{\alpha}}\in L^{\ast},\rho_1\in \wedge^{\bullet}T_X^{\ast},\\
 \mathcal{E}(\sigma)&:=& \frac{1}{k!}\sigma_{i_1\cdots i_k} (1+\epsilon+\epsilon^{\ast})(l^{i_1}+l_{i_1})\cdot \cdots \cdot (1+\epsilon+\epsilon^{\ast})(l^{i_k}+l_{i_k})\cdot e^{\epsilon+\epsilon^{\ast}}\cdot  \rho_1.
\end{eqnarray*}

\begin{proposition}\label{isoeE}
$e^{\epsilon}\cdot, \mathcal{E}$ are isomorphism.
\end{proposition}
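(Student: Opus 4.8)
The plan is to treat the two claims separately and to reduce both to the invertibility of the Clifford exponential. First I would establish that $e^{\epsilon}\cdot$ (and likewise $e^{\epsilon^{\ast}}\cdot$) is invertible, with inverse $e^{-\epsilon}\cdot$. Since $\epsilon\in\wedge^2 L^{\ast}$ we have $\epsilon^i\in\wedge^{2i}L^{\ast}$, so $\epsilon^{n+1}=0$ and the defining series for $e^{\epsilon}\cdot$ terminates. Because Clifford multiplication by $\epsilon$ commutes with itself and composes as $\epsilon^i\cdot\epsilon^j\cdot=\epsilon^{i+j}\cdot$, a direct binomial computation collecting terms by total degree gives $e^{\epsilon}\cdot e^{-\epsilon}\cdot=e^{-\epsilon}\cdot e^{\epsilon}\cdot=\mathrm{id}$, so $e^{\epsilon}\cdot$ is an isomorphism of $\wedge^{\bullet}T_X^{\ast}$.

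Second, for $\mathcal{E}$ I would exhibit it as a tensor product of two isomorphisms. Writing $U^{k-n}(X)=\wedge^k L^{\ast}\cdot U^{-n}(X)$ and using that the Clifford action of $\wedge^{\bullet}L^{\ast}$ on the canonical line $U^{-n}(X)$ is free (the spinor map $a\mapsto a\cdot\rho_0$ is a linear isomorphism onto $\wedge^{\bullet}T_X^{\ast}$), one gets a natural identification $U^{k-n}(X)\cong\wedge^k L^{\ast}\otimes U^{-n}(X)$, and likewise $U^{k-n}(X_{\epsilon})\cong\wedge^k L^{\ast}_{\epsilon}\otimes U^{-n}(X_{\epsilon})$. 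Under these identifications $\mathcal{E}$ is precisely $\big(\wedge^k(1+\epsilon^{\ast})\big)\otimes e^{\epsilon}$. I would then check each factor. The linear map $(1+\epsilon^{\ast})\colon L^{\ast}\to L^{\ast}_{\epsilon}$ is injective because $\epsilon^{\ast}(l^q)=\epsilon^{iq}l_i\in L$ (an analogue of the computation $\epsilon(l_p)=\epsilon_{ip}l^i$ in the text), so the $L^{\ast}$-component of $(1+\epsilon^{\ast})(l^q)$ is $l^q$ itself; it is surjective since $L^{\ast}_{\epsilon}:=(1+\epsilon^{\ast})(L^{\ast})$ by definition, hence it is an isomorphism and so is its $k$-th exterior power. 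The map $e^{\epsilon}\colon U^{-n}(X)\to U^{-n}(X_{\epsilon})$ is an isomorphism of lines: it is injective by the first step, and $e^{\epsilon}\cdot\rho_0$ lands in $U^{-n}(X_{\epsilon})$ because for any $\xi=(1+\epsilon)(l)\in L_{\epsilon}$ Proposition \ref{iso01} gives $\xi\cdot(e^{\epsilon}\cdot\rho_0)=e^{\epsilon}\cdot(l\cdot\rho_0)=0$, using $l\cdot\rho_0=0$. A tensor product of isomorphisms is an isomorphism, so $\mathcal{E}$ is an isomorphism.

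The step I expect to be the main obstacle is justifying the identification $U^{k-n}(X)\cong\wedge^k L^{\ast}\otimes U^{-n}(X)$, i.e. that the Clifford module structure on the canonical line is free, and then verifying that $\mathcal{E}$ really does decompose as the claimed tensor product independently of the chosen representation $\tfrac{1}{k!}\sigma_{i_1\cdots i_k}l^{i_1}\cdot\cdots\cdot l^{i_k}\cdot\rho_0$. Concretely one must confirm that applying $(1+\epsilon^{\ast})$ factorwise respects the Clifford anticommutation relations, so the output is well defined. An alternative that sidesteps this is to write the inverse of $\mathcal{E}$ down explicitly, using Proposition \ref{001} in the form $\xi^i=l_q(\xi^i)(1+\epsilon^{\ast})(l^q)$ to invert the cotangent factors together with $e^{-\epsilon}\cdot$ on the canonical factor. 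Finally, the hypothesis $\|\epsilon\|_{L^{\infty}}<1$ enters only to guarantee that $T_X\oplus T_X^{\ast}=L_{\epsilon}\oplus L^{\ast}_{\epsilon}$ is a genuine splitting, which is already implicit in $J_{\epsilon}$ being a generalized complex structure.
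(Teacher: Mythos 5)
Your proposal is correct and follows essentially the same route as the paper: the invertibility of $e^{\epsilon}\cdot$ is proved by the identical binomial cancellation $\sum_{i=0}^{p}(-1)^i C_p^i=0$, and your factorwise argument for $\mathcal{E}$ --- $(1+\epsilon^{\ast})$ invertible on $L^{\ast}$ because $\epsilon^{\ast}(L^{\ast})\subset L$, combined with $e^{\epsilon}$ mapping the canonical line isomorphically --- is exactly the content of the paper's explicit inverse $\mathcal{E}^{-1}$, which applies $(1-\epsilon^{\ast})$ to each cotangent factor and $e^{-\epsilon}$ to $e^{\epsilon}\rho_0$ in the basis $\{(1+\epsilon^{\ast})l^{i_1}\cdot\cdots\cdot(1+\epsilon^{\ast})l^{i_k}\cdot e^{\epsilon}\rho_0\}$. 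The ``alternative'' you mention at the end (writing the inverse down explicitly via Proposition \ref{001}) is precisely what the paper does, so the two arguments coincide in substance.
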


\begin{proof}

\begin{eqnarray*}
e^{-\epsilon}\cdot e^{\epsilon}\cdot &=&(\sum_{i=0}^{\infty} \frac{1}{i!}(-\epsilon)^i)\cdot(\sum_{j=0}^{\infty} \frac{1}{j!}(\epsilon)^j)\cdot\\
&=&\sum_{p=0}^{\infty}\sum_{i+j=p}\frac{1}{i!}\frac{1}{j!}(-\epsilon)^i\cdot(\epsilon)^j\cdot\\
&=&1+\sum_{p=1}^{\infty} \sum_{i=0}^p \frac{1}{i!}\frac{1}{(p-i)!}(-1)^i(\epsilon)^{p}\cdot\\
&=&1.
\end{eqnarray*}

The last equivalence since

\begin{eqnarray*}
&&\frac{1}{p!}-\frac{1}{(p-1)!}\frac{1}{1!}+\frac{1}{(p-2)!}\frac{1}{2!}-\cdots +(-1)^p \frac{1}{p!}\\
&=&\frac{1}{p!}(C_p^p-C_p^{p-1}+C_p^{p-2}-\cdots +(-1)^p C_p^0)=0.
\end{eqnarray*}

Assuming that $ \{l^1, \cdots, l^{2n}\}$ is a basis of $L^{\ast}$ and $\{\rho_0\}$ is a basis of $U^{-n}(X)$,
we have that  $\{(1+\epsilon^{\ast})l^1, \cdots, (1+\epsilon^{\ast})l^{2n}\}$ is a basis of $L_{\epsilon}^{\ast}$ and $\{e^{\epsilon}\rho_0\}$ is a basis of $U^{-n}(X_{\epsilon})$. Then  for any $\sigma \in U^{k}(X_{\epsilon})$, $\sigma$ can be locally represented as
$\sigma = \frac{1}{k!} \sigma_{i_1\cdots i_k} (1+\epsilon^{\ast})l^{i_1}\dot \cdots \cdot (1+\epsilon^{\ast})l^{i_k}\cdot e^{\epsilon}\rho_0$.

We define $\mathcal{E}^{-1}$ by
\begin{eqnarray*}
\mathcal{E}^{-1}:  \wedge ^k L^{\ast}_{\epsilon} \cdot U^{-n}(X_{\epsilon})  &\to& \wedge ^k L^{\ast} \cdot U^{-n}(X)   ,\\
\sigma  &\mapsto & \mathcal{E}^{-1}(\sigma)
\end{eqnarray*}
where  \begin{eqnarray*}
\sigma &:=& \frac{1}{k!} \sigma_{i_1\cdots i_k} (1+\epsilon^{\ast})l^{i_1}\dot \cdots \cdot (1+\epsilon^{\ast})l^{i_k}\cdot e^{\epsilon}\rho_0,\\
 \mathcal{E}^{-1}(\sigma)&:=&  \frac{1}{k!} \sigma_{i_1\cdots i_k} (1-\epsilon^{\ast})(1+\epsilon^{\ast})l^{i_1}\dot \cdots \cdot (1-\epsilon^{\ast}) (1+\epsilon^{\ast})l^{i_k}\cdot e^{-\epsilon} \cdot e^{\epsilon}\rho_0.
\end{eqnarray*}
And we have that $\mathcal{E}^{-1} \circ \mathcal{E}=1.$
Similarly, we have that $e^{\epsilon}\cdot e^{-\epsilon}\cdot =\mathcal{E} \circ \mathcal{E}^{-1} \circ=1$.

\end{proof}

Now we introduce the  following three propositions which will be used in computation of the  criterion formula.
\begin{proposition}\label{expansion01}
\begin{eqnarray*}
e^{-\epsilon}\cdot \mathcal{E} (\sigma) =(1+\epsilon^{\ast} - \epsilon\epsilon^{\ast}) ( \sigma),
\end{eqnarray*}
where for any $\sigma:=\frac{1}{k!}\sigma_{i_1\cdots i_k}l^{i_1}\cdot \cdots \cdot l^{i_k}\cdot \rho_0$,
\begin{eqnarray*}
&&(1+\epsilon^{\ast} - \epsilon\epsilon^{\ast})\sigma:=(1+\epsilon^{\ast} - \epsilon\epsilon^{\ast})
(\frac{1}{k!}\sigma_{i_1\cdots i_k}l^{i_1}\cdot \cdots \cdot l^{i_k}\cdot \rho_0)\\
&:=&\frac{1}{k!}\sigma_{i_1\cdots i_k}(1+\epsilon^{\ast} - \epsilon\epsilon^{\ast})(l^{i_1})\cdot \cdots \cdot (1+\epsilon^{\ast} - \epsilon\epsilon^{\ast})(l^{i_k})\cdot \rho_0\\
&=&\frac{1}{k!}\sigma_{i_1\cdots i_k}(1+\epsilon^{\ast} - \epsilon\epsilon^{\ast})(l^{i_1})\cdot \cdots \cdot (1+\epsilon^{\ast} - \epsilon\epsilon^{\ast})(l^{i_k})\cdot e^{\epsilon^{\ast}- \epsilon \epsilon^{\ast}}\cdot \rho_0.\\
\end{eqnarray*}

Here we use the fact  that $\epsilon^{\ast} \cdot U^{-n}(X)=0.$

\end{proposition}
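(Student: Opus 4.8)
The plan is to apply $e^{-\epsilon}\cdot$ directly to the defining expression for $\mathcal{E}(\sigma)$ and to push it through the Clifford product one factor at a time. Proposition \ref{iso01}, applied with $\epsilon$ replaced by $-\epsilon$, gives $e^{-\epsilon}\cdot(a\cdot\rho)=(1-\epsilon)(a)\cdot(e^{-\epsilon}\cdot\rho)$ for every $a\in L\oplus L^{\ast}$ and every $\rho$. Writing $m^{i_\alpha}:=(1+\epsilon^{\ast})(l^{i_\alpha})$, I first observe that $m^{i_\alpha}\in L\oplus L^{\ast}$, because $l^{i_\alpha}\in L^{\ast}$ and $\epsilon^{\ast}(l^{i_\alpha})\in L$; hence the proposition genuinely applies to each of the $k$ Clifford factors of $\mathcal{E}(\sigma)$.

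First I would iterate this identity across all the factors of $\mathcal{E}(\sigma)=\frac{1}{k!}\sigma_{i_1\cdots i_k}\,m^{i_1}\cdot\cdots\cdot m^{i_k}\cdot(e^{\epsilon}\cdot\rho_0)$, which carries $e^{-\epsilon}\cdot$ past each $m^{i_\alpha}$ (replacing it by $(1-\epsilon)(m^{i_\alpha})$) and deposits $e^{-\epsilon}\cdot e^{\epsilon}\cdot\rho_0$ at the tail. By Proposition \ref{isoeE} we have $e^{-\epsilon}\cdot e^{\epsilon}\cdot=1$, so the tail collapses to $\rho_0$ and
\[
e^{-\epsilon}\cdot\mathcal{E}(\sigma)=\frac{1}{k!}\sigma_{i_1\cdots i_k}\,(1-\epsilon)(m^{i_1})\cdot\cdots\cdot(1-\epsilon)(m^{i_k})\cdot\rho_0 .
\]

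Next I would simplify the per-factor map $(1-\epsilon)(1+\epsilon^{\ast})=1+\epsilon^{\ast}-\epsilon-\epsilon\epsilon^{\ast}$ on $l^{i_\alpha}\in L^{\ast}$. Using $\epsilon(L^{\ast})=0$ (recorded just after Proposition \ref{iso01}), the lone $-\epsilon$ term vanishes, so $(1-\epsilon)(m^{i_\alpha})=(1+\epsilon^{\ast}-\epsilon\epsilon^{\ast})(l^{i_\alpha})$. Feeding this back into the display reproduces precisely the first defining expression of $(1+\epsilon^{\ast}-\epsilon\epsilon^{\ast})(\sigma)$, which is the asserted identity. For the equivalent reformulation with $e^{\epsilon^{\ast}-\epsilon\epsilon^{\ast}}\cdot\rho_0$ replacing $\rho_0$, I would invoke $\epsilon^{\ast}\cdot\rho_0=0$, valid because $\rho_0\in U^{-n}(X)$ obeys $L\cdot\rho_0=0$ while $\epsilon^{\ast}\in\wedge^2 L$; this forces $(\epsilon^{\ast}-\epsilon\epsilon^{\ast})\cdot\rho_0=0$ and hence $e^{\epsilon^{\ast}-\epsilon\epsilon^{\ast}}\cdot\rho_0=\rho_0$.

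I expect the only delicate point to be the bookkeeping in the first step: one must keep the two roles of $\epsilon^{\ast}$ apart --- inside $m^{i_\alpha}$ it is the contraction map $L^{\ast}\to L$, whereas in the $e^{\pm\epsilon}$ factors (and in $e^{\epsilon^{\ast}-\epsilon\epsilon^{\ast}}$) everything acts by Clifford multiplication --- and confirm that Proposition \ref{iso01} applies factor-by-factor because each $m^{i_\alpha}\in L\oplus L^{\ast}$. Once these interpretations are disentangled and $\epsilon(L^{\ast})=0$ is used, the remaining algebra is routine.
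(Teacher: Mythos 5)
Your proposal is correct and follows essentially the same route as the paper: apply $e^{-\epsilon}\cdot$ factor-by-factor via Proposition \ref{iso01} (with $\epsilon$ replaced by $-\epsilon$), cancel $e^{-\epsilon}\cdot e^{\epsilon}\cdot\rho_0=\rho_0$, and kill the stray $-\epsilon$ term using $\epsilon(L^{\ast})=0$. Your added remarks --- checking that each factor $(1+\epsilon^{\ast})(l^{i_\alpha})$ lies in $L\oplus L^{\ast}$ so the lemma applies, and justifying the final reformulation via $\epsilon^{\ast}\cdot U^{-n}(X)=0$ --- are details the paper leaves implicit, but they do not change the argument.
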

\begin{proof}
The proof follows from straightforward computation. For any $\sigma:=\frac{1}{k!}\sigma_{i_1\cdots i_k}l^{i_1}\cdot \cdots \cdot l^{i_k}\cdot \rho_0$, by Proposition \ref{iso01} above, we have that
\begin{eqnarray*}
e^{-\epsilon}\cdot \mathcal{E} (\sigma) &=&e^{-\epsilon}\cdot \mathcal{E}  (\frac{1}{k!}\sigma_{i_1\cdots i_k}l^{i_1}\cdot \cdots \cdot l^{i_k}\cdot \rho_0)\\
&=&\frac{1}{k!}\sigma_{i_1\cdots i_k}e^{-\epsilon}\cdot (1+\epsilon^{\ast})(l^{i_1})\cdot \cdots \cdot (1+\epsilon^{\ast})(l^{i_k})\cdot (e^{\epsilon}\cdot \rho_0)\\
&=&\frac{1}{k!}\sigma_{i_1\cdots i_k}(1-\epsilon)  ((1+\epsilon^{\ast})(l^{i_1}))\cdot \cdots \cdot(1-\epsilon) ((1+\epsilon^{\ast})(l^{i_k}))\cdot e^{-\epsilon}\cdot(e^{\epsilon}\cdot \rho_0)\\
&=&\frac{1}{k!}\sigma_{i_1\cdots i_k}(1-\epsilon+\epsilon^{\ast}-\epsilon\epsilon^{\ast})(l^{i_1})\cdot \cdots \cdot(1-\epsilon+\epsilon^{\ast}-\epsilon\epsilon^{\ast})(l^{i_k})\cdot \rho_0\\
&=&\frac{1}{k!}\sigma_{i_1\cdots i_k}(1+\epsilon^{\ast}-\epsilon\epsilon^{\ast})(l^{i_1})\cdot \cdots \cdot(1+\epsilon^{\ast}-\epsilon\epsilon^{\ast})(l^{i_k})\cdot \rho_0.
\end{eqnarray*}

The last equality holds since $\epsilon( L^{\ast})=0.$
\end{proof}

\begin{proposition}\label{expansion02} Assume that $||\epsilon||_{L^{\infty}}<1$, we have that
\begin{eqnarray*}
\mathcal{E}^{-1}\circ e^{\epsilon}(\sigma)=(-\epsilon^{\ast}(1-\epsilon\epsilon^{\ast})^{-1}+(1-\epsilon\epsilon^{\ast})^{-1})( \sigma).
\end{eqnarray*}
\end{proposition}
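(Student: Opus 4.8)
The plan is to read Proposition \ref{expansion01} as an operator factorization of $\mathcal{E}$ and then simply invert. Rewriting Proposition \ref{expansion01} with the help of $e^{\epsilon}\cdot e^{-\epsilon}\cdot=1$ (Proposition \ref{isoeE}), I first record that $\mathcal{E}=e^{\epsilon}\circ P$ as operators, where $P:=1+\epsilon^{\ast}-\epsilon\epsilon^{\ast}$ is applied factor-by-factor to the $L^{\ast}$-Clifford factors of $\sigma$ while fixing the $U^{-n}(X)$-part (here $\epsilon^{\ast}\cdot U^{-n}(X)=0$ makes the exponential correction on $\rho_0$ collapse to the identity). Throughout, $\epsilon$ and $\epsilon^{\ast}$ are viewed as the endomorphisms of $L\oplus L^{\ast}$ coming from $\epsilon\in\wedge^{2}L^{\ast}$ and $\epsilon^{\ast}\in\wedge^{2}L$, so that $\epsilon\colon L\to L^{\ast}$ kills $L^{\ast}$ and $\epsilon^{\ast}\colon L^{\ast}\to L$ kills $L$. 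Since $e^{\epsilon}$ is an isomorphism (Proposition \ref{isoeE}) and, under the hypothesis $\|\epsilon\|_{L^{\infty}}<1$, the operator $1-\epsilon\epsilon^{\ast}$ is invertible via its Neumann series, $P$ is invertible; hence $\mathcal{E}^{-1}\circ e^{\epsilon}=P^{-1}$ and everything reduces to computing $P^{-1}$.

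To find $P^{-1}$ I would guess $B:=(1-\epsilon^{\ast})(1-\epsilon\epsilon^{\ast})^{-1}$ and verify $PB=1$ by the one-line computation
\[
(1+\epsilon^{\ast}-\epsilon\epsilon^{\ast})(1-\epsilon^{\ast})=1-\epsilon\epsilon^{\ast},
\]
in which every term involving $\epsilon^{\ast}\epsilon^{\ast}$ disappears because $(\epsilon^{\ast})^{2}=0$ (indeed $\epsilon^{\ast}$ sends $L^{\ast}$ into $L$ and annihilates $L$). Multiplying on the right by $(1-\epsilon\epsilon^{\ast})^{-1}$ gives $PB=1$, so $P^{-1}=B$; distributing $B=-\epsilon^{\ast}(1-\epsilon\epsilon^{\ast})^{-1}+(1-\epsilon\epsilon^{\ast})^{-1}$ is exactly the asserted formula. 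To keep this rigorous I would phrase the last step through the injectivity of $\mathcal{E}$: it suffices to check $\mathcal{E}(B\sigma)=e^{\epsilon}\sigma$, which follows from $\mathcal{E}=e^{\epsilon}\circ P$ together with $PB=1$.

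The algebra is trivial; the real work is the bookkeeping, and this is where I expect the main difficulty. The element $B\sigma$ genuinely carries $L$-components (from the $-\epsilon^{\ast}(1-\epsilon\epsilon^{\ast})^{-1}$ summand), so $\mathcal{E}^{-1}$, $\mathcal{E}$ and Proposition \ref{expansion01} must be used in their extensions to $\wedge^{k}(L\oplus L^{\ast})\cdot\wedge^{\bullet}T_X^{\ast}$ recorded after Proposition \ref{iso01}, rather than only on $\wedge^{k}L^{\ast}\cdot U^{-n}(X)$. One therefore has to confirm that the factor-wise operators $P$ and $B$ compose with the Clifford structure without generating spurious cross-terms; the nilpotency relations $\epsilon^{2}=0$ and $(\epsilon^{\ast})^{2}=0$, together with $\epsilon(L^{\ast})=\epsilon^{\ast}(L)=0$, are precisely what make these cross-terms vanish and let the computation collapse to the clean block-triangular inverse of Lemma \ref{inversematrix}.
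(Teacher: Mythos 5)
Your argument is correct, and it reaches the formula by a genuinely different and shorter route than the paper. The paper does not invert Proposition \ref{expansion01} at all: it re-derives the operator from scratch by expanding $l^p=\xi^i(l^p)\xi_i+\xi_i(l^p)\xi^i$ in the deformed dual bases, feeding in Propositions \ref{unholpart} and \ref{001} to identify the coefficient matrices as $-[\epsilon^{\ast}](1-[\epsilon][\epsilon^{\ast}])^{-1}$ and $(1-[\epsilon^{\ast}][\epsilon])^{-1}$, expanding these as Neumann series (this is where $||\epsilon||_{L^{\infty}}<1$ enters), and only then applying the definition of $\mathcal{E}^{-1}$ factor by factor. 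Your factorization $\mathcal{E}=e^{\epsilon}\circ P$ with $P=1+\epsilon^{\ast}-\epsilon\epsilon^{\ast}$, followed by the block-triangular inversion $P^{-1}=(1-\epsilon^{\ast})(1-\epsilon\epsilon^{\ast})^{-1}$ coming from $(\epsilon^{\ast})^{2}=0$, compresses all of that linear algebra into one line and makes it transparent that the answer is just the relevant blocks of the inverse in Proposition \ref{unholpart} (compare Lemma \ref{inversematrix}). What the paper's longer computation buys is exactly the point you flag as the real work: by rewriting each $l^{p}$ explicitly in the basis $\{\xi_i,\xi^i\}$ it never needs to invoke, as a black box, the extension of $\mathcal{E}$, $\mathcal{E}^{-1}$ and Proposition \ref{expansion01} to Clifford words carrying genuine $L$-factors --- the dual-basis expansion \emph{is} that extension, written out. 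One caution if you lean on the ``equivalent definition'' recorded after Proposition \ref{iso01}: its $e^{\epsilon+\epsilon^{\ast}}\cdot\rho_1$ normalization does not collapse to $e^{\epsilon}\cdot\rho_0$ on $U^{-n}(X)$, since $\epsilon^{\ast}\cdot\epsilon\cdot\rho_0\in U^{-n}(X)$ is generically a nonzero multiple of $\rho_0$; you should instead use the extension that applies $1+\epsilon+\epsilon^{\ast}$ factor-wise and keeps $e^{\epsilon}\cdot\rho_0$ on the canonical-bundle part (which is what the paper's computation implicitly does). With that convention, $(1+\epsilon+\epsilon^{\ast})\circ B=(1+\epsilon)\circ P\circ B=1+\epsilon$, which is the identity on $L^{\ast}$, and your verification $\mathcal{E}(B\sigma)=e^{\epsilon}\cdot\sigma$ closes cleanly.
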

\begin{proof}

Since

\begin{eqnarray*}
\epsilon \epsilon^{\ast} ( l^k)&:=&\epsilon (\epsilon^{qk}l_q)\\
&=&\epsilon^{qk}\epsilon_{pq}\epsilon^p,\\
\end{eqnarray*}

we have that
\begin{eqnarray*}
\epsilon \epsilon^{\ast} \left(\begin{array}{c}
l^1\\
\cdots \\
l^{2n}
\end{array}\right)&:=&[\epsilon^{\ast}][ \epsilon]\left(\begin{array}{c}
l^1\\
\cdots \\
l^{2n}
\end{array}\right).
\end{eqnarray*}

Similarly,
\begin{eqnarray*}
 \epsilon^{\ast}\epsilon \epsilon^{\ast} \left(\begin{array}{c}
l^1\\
\cdots \\
l^{2n}
\end{array}\right)&:=&[\epsilon^{\ast}][ \epsilon] [\epsilon^{\ast}]\left(\begin{array}{c}
l_1\\
\cdots \\
l_{2n}
\end{array}\right).
\end{eqnarray*}

Since
\begin{eqnarray*}
l^p=\xi^i(l^p)\xi_i+\xi_i(l^p)\xi^i,
\end{eqnarray*}

by the assumption that $||\epsilon||_{L^{\infty}}<1$, Proposition \ref{iso01}, Proposition \ref{unholpart} and Proposition \ref{001}, we have that

\begin{eqnarray*}
\left(\begin{array}{c}
l^1\\
\cdots \\
l^{2n}
\end{array}\right)&=&
\left(\begin{array}{ccc}
\xi^1(l^1) &  \cdots &  \xi^{2n}(l^1)\\
\cdots &  \cdots  &   \cdots\\
\xi^1(l^{2n}) &  \cdots &  \xi^{2n}(l^{2n})
\end{array}\right)
\left(\begin{array}{c}
\xi_1\\
\cdots \\
\xi_{2n}
\end{array}\right)+
\left(\begin{array}{ccc}
\xi_1(l^1) &  \cdots &  \xi_{2n}(l^1)\\
\cdots &  \cdots  &   \cdots\\
\xi_1(l^{2n}) &  \cdots &  \xi_{2n}(l^{2n})
\end{array}\right)
\left(\begin{array}{c}
\xi^1\\
\cdots \\
\xi^{2n}
\end{array}\right)\\
&=&
\left(\begin{array}{ccc}
\xi^1(l^1) &  \cdots &  \xi^{2n}(l^1)\\
\cdots &  \cdots  &   \cdots\\
\xi^1(l^{2n}) &  \cdots &  \xi^{2n}(l^{2n})
\end{array}\right)
\left(\begin{array}{ccc}
l^1(\xi_1) &  \cdots &  l^{2n}(\xi_1)\\
\cdots &  \cdots  &   \cdots\\
l^1(\xi_{2n}) &  \cdots &  l^{2n}(\xi_{2n})
\end{array}\right)(1+\epsilon)
\left(\begin{array}{c}
l_1\\
\cdots \\
l_{2n}
\end{array}\right)\\
&&+
\left(\begin{array}{ccc}
\xi_1(l^1) &  \cdots &  \xi_{2n}(l^1)\\
\cdots &  \cdots  &   \cdots\\
\xi_1(j^{2n}) &  \cdots &  \xi_{2n}(l^{2n})
\end{array}\right)
\left(\begin{array}{ccc}
l_1(\xi^1) &  \cdots &  l_{2n}(\xi^1)\\
\cdots &  \cdots  &   \cdots\\
l_1(\xi^{2n}) &  \cdots &  l_{2n}(\xi^{2n})
\end{array}\right)(1+\epsilon^{\ast})
\left(\begin{array}{c}
l^1\\
\cdots \\
l^{2n}
\end{array}\right)\\
&=&
L^{\ast}_{\epsilon}(L^{\ast})L^{\ast}(L_{\epsilon})
(1+\epsilon)
\left(\begin{array}{c}
l_1\\
\cdots \\
l_{2n}
\end{array}\right)+
L_{\epsilon}(L^{\ast})L(L^{\ast}_{\epsilon})(1+\epsilon^{\ast})
\left(\begin{array}{c}
l^1\\
\cdots \\
l^{2n}
\end{array}\right)\\
&=&(-[\epsilon^{\ast}](1-[\epsilon][\epsilon^{\ast}])^{-1})(1+\epsilon)
\left(\begin{array}{c}
l_1\\
\cdots \\
l_{2n}
\end{array}\right)+(1-[\epsilon^{\ast}][\epsilon])^{-1}(1+\epsilon^{\ast})
\left(\begin{array}{c}
l^1\\
\cdots \\
l^{2n}
\end{array}\right)\\
&=&(1+\epsilon)(-[\epsilon^{\ast}](1-[\epsilon][\epsilon^{\ast}])^{-1})
\left(\begin{array}{c}
l_1\\
\cdots \\
l_{2n}
\end{array}\right)+(1+\epsilon^{\ast})(1-[\epsilon^{\ast}][\epsilon])^{-1}
\left(\begin{array}{c}
l^1\\
\cdots \\
l^{2n}
\end{array}\right)\\
&=&(1+\epsilon)(-[\epsilon^{\ast}]\sum_{i\ge 0} ([\epsilon][\epsilon^{\ast}])^i )
\left(\begin{array}{c}
l_1\\
\cdots \\
l_{2n}
\end{array}\right)+(1+\epsilon^{\ast})(\sum_{i\ge 0} ([\epsilon^{\ast}][\epsilon])^i)
\left(\begin{array}{c}
l^1\\
\cdots \\
l^{2n}
\end{array}\right)\\
&=&(1+\epsilon)(-\sum_{i\ge 0} (\epsilon^{\ast}\epsilon)^i \epsilon^{\ast} )
\left(\begin{array}{c}
l^1\\
\cdots \\
l^{2n}
\end{array}\right)+(1+\epsilon^{\ast})(\sum_{i\ge 0} (\epsilon \epsilon^{\ast} )^i)
\left(\begin{array}{c}
l^1\\
\cdots \\
l^{2n}
\end{array}\right)\\
&=&(1+\epsilon)(- (1-\epsilon^{\ast} \epsilon)^{-1} \epsilon^{\ast})
\left(\begin{array}{c}
l^1\\
\cdots \\
l^{2n}
\end{array}\right)+(1+\epsilon^{\ast}) (1-\epsilon \epsilon^{\ast} )^{-1}
\left(\begin{array}{c}
l^1\\
\cdots \\
l^{2n}
\end{array}\right)\\
&=&(1+\epsilon+\epsilon^{\ast})(- (1-\epsilon^{\ast} \epsilon)^{-1}\epsilon^{\ast} + (1-\epsilon \epsilon^{\ast} )^{-1})
\left(\begin{array}{c}
l^1\\
\cdots \\
l^{2n}
\end{array}\right)\\
&=&(1+\epsilon+\epsilon^{\ast})(- \epsilon^{\ast}(1-\epsilon \epsilon^{\ast})^{-1} + (1-\epsilon \epsilon^{\ast} )^{-1})
\left(\begin{array}{c}
l^1\\
\cdots \\
l^{2n}
\end{array}\right).
\end{eqnarray*}

Thus, for any $\sigma:=\frac{1}{k!}\sigma_{i_1\cdots i_k}l^{i_1}\cdot \cdots \cdot l^{i_k}\cdot \rho_0$, we have that

\begin{eqnarray*}
\mathcal{E}^{-1} \circ e^{\epsilon} \cdot \sigma &:=& \mathcal{E}^{-1} \circ e^{\epsilon} \cdot (\frac{1}{k!}\sigma_{i_1\cdots i_k}l^{i_1}\cdot \cdots \cdot l^{i_k}\cdot \rho_0)\\
&=&\mathcal{E}^{-1} \circ (\frac{1}{k!}\sigma_{i_1\cdots i_k}  l^{i_1}\cdot \cdots \cdot l^{i_k}\cdot (e^{\epsilon}\cdot \rho_0))~~(\mbox{since} ~\epsilon( L^{\ast})=0)\\
&=& \frac{1}{k!}\sigma_{i_1\cdots i_k} \mathcal{E}^{-1}( l^{i_1}\cdot \cdots \cdot l^{i_k}\cdot  e^{\epsilon}\cdot\rho_0)\\
&=& \frac{1}{k!}\sigma_{i_1\cdots i_k} (- \epsilon^{\ast}(1-\epsilon \epsilon^{\ast})^{-1}+(1-\epsilon \epsilon^{\ast} )^{-1})( l^{i_1})\cdot \\
&&\cdots \cdot (- \epsilon^{\ast}(1-\epsilon \epsilon^{\ast})^{-1}+(1-\epsilon \epsilon^{\ast} )^{-1})(l^{i_k}) \cdot\rho_0\\
\end{eqnarray*}

\end{proof}

\begin{theorem}\label{expansion03} Assume that $||\epsilon||_{L^{\infty}}<1$, we have that
\begin{eqnarray*}
d\circ \mathcal{E}=\mathcal{E}\circ (-\epsilon^{\ast}(1-\epsilon\epsilon^{\ast})^{-1}+(1-\epsilon\epsilon^{\ast})^{-1})
((d+[\partial , \epsilon\cdot])\circ  (1+\epsilon^{\ast} - \epsilon\epsilon^{\ast}) ( \sigma)).
\end{eqnarray*}
\end{theorem}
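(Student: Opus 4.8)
The plan is to factor $\mathcal{E}$ through the Clifford exponential $e^{\epsilon}\cdot$, push the twisted differential $d$ through $e^{\epsilon}\cdot$ using the integrability of $\epsilon$, and then re-express $e^{\epsilon}\cdot$ back in terms of $\mathcal{E}$. Since all three moves are already isolated as Proposition \ref{expansion01}, Lemma \ref{holcri} and Proposition \ref{expansion02}, the proof reduces to a careful concatenation of these identities. Concretely, I first rewrite $\mathcal{E}(\sigma)$ via Proposition \ref{expansion01}: because $e^{\epsilon}\cdot e^{-\epsilon}\cdot=1$ by Proposition \ref{isoeE}, applying $e^{\epsilon}\cdot$ to both sides of Proposition \ref{expansion01} gives $\mathcal{E}(\sigma)=e^{\epsilon}\cdot(1+\epsilon^{\ast}-\epsilon\epsilon^{\ast})(\sigma)$, and hence $d\circ\mathcal{E}(\sigma)=d\circ e^{\epsilon}\cdot(1+\epsilon^{\ast}-\epsilon\epsilon^{\ast})(\sigma)$.

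Next I would commute $d$ past $e^{\epsilon}\cdot$. Rewriting the identity of Lemma \ref{holcri} as the operator equation $d\circ e^{\epsilon}\cdot=e^{\epsilon}\cdot(d+[\partial,\epsilon\cdot])$, which is legitimate because the generalized Beltrami differential $\epsilon$ coming from Kodaira--Spencer--Kuranishi theory is integrable, i.e. $d_{L}\epsilon=\frac{1}{2}[\epsilon,\epsilon]$, I obtain $d\circ\mathcal{E}(\sigma)=e^{\epsilon}\cdot(d+[\partial,\epsilon\cdot])\circ(1+\epsilon^{\ast}-\epsilon\epsilon^{\ast})(\sigma)$.

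Finally I would convert the leading $e^{\epsilon}\cdot$ back into $\mathcal{E}$. Applying $\mathcal{E}$ on the left of Proposition \ref{expansion02} and invoking $\mathcal{E}\circ\mathcal{E}^{-1}=1$ from Proposition \ref{isoeE} yields the operator identity $e^{\epsilon}\cdot=\mathcal{E}\circ(-\epsilon^{\ast}(1-\epsilon\epsilon^{\ast})^{-1}+(1-\epsilon\epsilon^{\ast})^{-1})$. Substituting $(d+[\partial,\epsilon\cdot])\circ(1+\epsilon^{\ast}-\epsilon\epsilon^{\ast})(\sigma)$ for the argument then produces exactly the asserted formula, so the three substitutions chain together to give the claim.

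The one point demanding genuine care, and the main obstacle, is the bookkeeping of domains. The element $(1+\epsilon^{\ast}-\epsilon\epsilon^{\ast})(\sigma)$ no longer lies in $\wedge^{\bullet}L^{\ast}\cdot U^{-n}(X)$ but in the enlarged Clifford module $\wedge^{\bullet}(L\oplus L^{\ast})\cdot\wedge^{\bullet}T_{X}^{\ast}$, and $d$ mixes the $L$ and $L^{\ast}$ directions still further. I must therefore invoke $e^{\epsilon}\cdot$, $\mathcal{E}$ and $\mathcal{E}^{-1}$ in the extended form recorded just after Proposition \ref{iso01}, and check that Propositions \ref{expansion01}, \ref{expansion02} and Lemma \ref{holcri} remain valid on this larger space (Lemma \ref{holcri} is in fact stated for arbitrary $\sigma\in\wedge^{\bullet}T_{X}^{\ast}$, which is what makes the middle step go through). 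Once this extended validity is granted, every remaining manipulation is a formal composition of the three identities with no further computation required.
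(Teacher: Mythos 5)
Your proof is correct and follows essentially the same route as the paper: both insert the identities $e^{\epsilon}\cdot e^{-\epsilon}\cdot=1$ and $\mathcal{E}\circ\mathcal{E}^{-1}=1$ and then chain Proposition \ref{expansion01}, Lemma \ref{holcri} and Proposition \ref{expansion02} in the same order. Your extra attention to the fact that the intermediate objects live in the enlarged Clifford module $\wedge^{\bullet}(L\oplus L^{\ast})\cdot\wedge^{\bullet}T_X^{\ast}$ is a point the paper passes over silently, but it does not change the argument.
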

\begin{proof}
By Proposition \ref{expansion01} and Proposition \ref{expansion02}, we have that
\begin{eqnarray*}
d\circ \mathcal{E}(\sigma)&=&d\circ e^{\epsilon}\cdot e^{-\epsilon}\cdot\mathcal{E}(\sigma)\\
&=&e^{\epsilon}\cdot (d+[\partial , \epsilon\cdot])\circ  e^{-\epsilon}\cdot\mathcal{E}(\sigma) ~ ~(\mbox{since ~Lemma}~ \ref{holcri})\\
&=&\mathcal{E}\circ \mathcal{E}^{-1} \circ e^{\epsilon}\cdot (d+[\partial , \epsilon\cdot])\circ  e^{-\epsilon}\cdot\mathcal{E}(\sigma)\\
&=&\mathcal{E}\circ (-\epsilon^{\ast}(1-\epsilon\epsilon^{\ast})^{-1}+(1-\epsilon\epsilon^{\ast})^{-1})
(d+[\partial , \epsilon\cdot])\circ  (1+\epsilon^{\ast} - \epsilon\epsilon^{\ast}) ( \sigma).
\end{eqnarray*}

\end{proof}

Now, we get the criterion formula for whether a differential form is holomorphic with respect to the generalized complex structure $J_{\epsilon}$ induced by $\epsilon$.
\begin{theorem}\label{k-cri}
Assume that $||\epsilon||_{L^{\infty}}<1$, we have that
\begin{eqnarray*}
\overline\partial_t(\mathcal{E} (\sigma))=0
\Leftrightarrow ([\partial, \epsilon\cdot]+\overline\partial)
\circ (1 - \epsilon\epsilon^{\ast}) ( \sigma)=0,
\end{eqnarray*}

where $\overline\partial_t$ is the $\overline\partial$-operator on $X_{\epsilon}:=X_{\epsilon(t)}$.
\end{theorem}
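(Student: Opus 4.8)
The plan is to read off the criterion from the operator identity of Theorem~\ref{expansion03} by isolating a single Clifford-graded component. Write $A := -\epsilon^{\ast}(1-\epsilon\epsilon^{\ast})^{-1}+(1-\epsilon\epsilon^{\ast})^{-1}$ and $B := 1+\epsilon^{\ast}-\epsilon\epsilon^{\ast}$, both understood as the factor-wise operators on the $L^{\ast}$-generators used in Propositions~\ref{expansion01} and~\ref{expansion02}, so that Theorem~\ref{expansion03} reads $\mathcal{E}^{-1}\circ d\circ\mathcal{E}=A\circ(d+[\partial,\epsilon\cdot])\circ B$. The first step is to note that $\mathcal{E}$ is a grading-preserving isomorphism $U^{k-n}(X)\to U^{k-n}(X_{\epsilon})$: by construction it sends each factor $l^{i}\in L^{\ast}$ to $(1+\epsilon^{\ast})(l^{i})\in L^{\ast}_{\epsilon}$ and $\rho_0$ to $e^{\epsilon}\rho_0\in U^{-n}(X_{\epsilon})$. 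Since $J_{\epsilon}$ is integrable, $d=\partial_t+\overline\partial_t$ on $X_{\epsilon}$, and $\mathcal{E}(\sigma)\in U^{k-n}(X_{\epsilon})$ forces $d\,\mathcal{E}(\sigma)\in U^{k-n-1}(X_{\epsilon})\oplus U^{k-n+1}(X_{\epsilon})$, with $\overline\partial_t\mathcal{E}(\sigma)$ its $U^{k-n+1}(X_{\epsilon})$-component. Because $\mathcal{E}$ preserves the grading, $\overline\partial_t\mathcal{E}(\sigma)=0$ is therefore equivalent to the vanishing of the $U^{k-n+1}(X)$-component of $A\circ(d+[\partial,\epsilon\cdot])\circ B(\sigma)$.

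Next I would record the Clifford-degree shifts of each operator. On $X$, $\overline\partial$ and $[\partial,\epsilon\cdot]$ raise the grading by $1$ while $\partial$ lowers it by $1$; the factor-wise operator $B$ decomposes as the grading-preserving part $1-\epsilon\epsilon^{\ast}$ (which sends $L^{\ast}$ into $L^{\ast}$) plus terms in which at least one $\epsilon^{\ast}$ sends an $L^{\ast}$-generator into $L$, lowering the grading by $2,4,\dots$; and $A$ decomposes as the grading-preserving part $(1-\epsilon\epsilon^{\ast})^{-1}$ plus the part $-\epsilon^{\ast}(1-\epsilon\epsilon^{\ast})^{-1}$ that lowers the grading by $2$. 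In particular neither $A$ nor $(d+[\partial,\epsilon\cdot])$ can raise the grading at all by more than $1$ in total.

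With these bookkeeping rules in hand, the $U^{k-n+1}(X)$-component is forced to come from the top piece of $B$ alone: the lower pieces of $B(\sigma)$ lie in $U^{k-n-2}(X),U^{k-n-4}(X),\dots$, and since the subsequent operators raise the grading by at most $1$, none of them can reach $U^{k-n+1}(X)$. Feeding the top piece $(1-\epsilon\epsilon^{\ast})(\sigma)\in U^{k-n}(X)$ through the grading-raising part $\overline\partial+[\partial,\epsilon\cdot]$ and then the grading-preserving part of $A$ shows that this component equals
\[ (1-\epsilon\epsilon^{\ast})^{-1}\circ(\overline\partial+[\partial,\epsilon\cdot])\circ(1-\epsilon\epsilon^{\ast})(\sigma). \]
Finally, the hypothesis $\|\epsilon\|_{L^{\infty}}<1$ makes the factor-wise operator $(1-\epsilon\epsilon^{\ast})^{-1}$ an isomorphism, so composing with it does not change the kernel; dropping it yields $\overline\partial_t\mathcal{E}(\sigma)=0\Leftrightarrow(\overline\partial+[\partial,\epsilon\cdot])\circ(1-\epsilon\epsilon^{\ast})(\sigma)=0$, which is the assertion.

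I expect the grading bookkeeping of the third step to be the main obstacle: one must verify rigorously that $A$ and $B$ really do act factor-wise with the stated degree shifts, and in particular confirm that no lower component of $B(\sigma)$ leaks into the top degree $U^{k-n+1}(X)$. Once that is pinned down the remaining manipulations are purely formal, relying only on the invertibility of $(1-\epsilon\epsilon^{\ast})^{-1}$ under the smallness hypothesis.
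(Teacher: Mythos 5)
Your proposal is correct and follows essentially the same route as the paper: both start from Theorem~\ref{expansion03}, use the Clifford-degree shifts of $\partial$, $\overline\partial+[\partial,\epsilon\cdot]$, and the factor-wise operators to see that only the top graded piece $(1-\epsilon\epsilon^{\ast})(\sigma)$ can contribute to the $U^{k+1}$-component, and then invoke the invertibility of $\mathcal{E}\circ(1-\epsilon\epsilon^{\ast})^{-1}$ under $\|\epsilon\|_{L^{\infty}}<1$ to drop it from the kernel condition. Your write-up merely makes explicit the type-comparison bookkeeping that the paper compresses into the line ``comparing the type of the differential form.''
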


\begin{proof}
By definition, we have that

\begin{eqnarray*}
\partial &:&  U^k(X) \to U^{k-1}(X),\\
\overline\partial +[\partial, \epsilon\cdot]&:&U^k(X) \to U^{k+1}(X),\\
(1+\epsilon^{\ast}-\epsilon\epsilon^{\ast})&:&  U^k(X) \to \oplus_{p\ge 0} U^{k-2p}(X),\\
-\epsilon^{\ast}(1-\epsilon\epsilon^{\ast})^{-1}+(1-\epsilon\epsilon^{\ast})^{-1}&:& U^k(X) \to \oplus_{p\ge 0} U^{k-2p}(X).\\
\end{eqnarray*}
Here for any $\sigma:=\frac{1}{k!}\sigma_{i_1\cdots i_k}l^{i_1}\cdot \cdots \cdot l^{i_k}\cdot \rho_0$,
$$\epsilon^{\ast}(\sigma):=\frac{1}{k!}\sigma_{i_1\cdots i_k}\epsilon^{\ast}(l^{i_1})\cdot \cdots \cdot \epsilon^{\ast}(l^{i_k})\cdot e^{\epsilon^{\ast}}\cdot \rho_0.$$
And similar to $(1-\epsilon\epsilon^{\ast})^{-1}, ~(1-\epsilon\epsilon^{\ast}), ~-\epsilon^{\ast}(1-\epsilon\epsilon^{\ast})^{-1}.$

Then, by Proposition \ref{isoeE}, Proposition \ref{k-cri} above and comparing the type of the differential form,  for any $\sigma \in U^k(X),$ we have that
\begin{eqnarray*}
&&\overline\partial_t \circ \mathcal{E} (\sigma)\\
&:=&d \circ \mathcal{E} (\sigma)\mid _{U^{k+1}(X_{\epsilon(t)})} \\
&=& \mathcal{E} \circ ( (-\epsilon^{\ast}(1-\epsilon\epsilon^{\ast})^{-1}+(1-\epsilon\epsilon^{\ast})^{-1})
((d+[\partial , \epsilon\cdot])\circ  (1+\epsilon^{\ast} - \epsilon\epsilon^{\ast}) ( \sigma)) \mid _{ U^{k+1}(X)})\\
&=&\mathcal{E} \circ  ((1-\epsilon\epsilon^{\ast})^{-1})
(\overline\partial+[\partial , \epsilon\cdot])\circ  (1 - \epsilon\epsilon^{\ast}) ( \sigma).
\end{eqnarray*}

Since the assumption that   $||\epsilon||_{L^{\infty}}<1$, we have that $\mathcal{E} \circ  ((1-\epsilon\epsilon^{\ast})^{-1})$ is invertible and thus

\begin{eqnarray*}
&&\overline\partial_t \circ \mathcal{E} (\sigma)=0\\
&\Leftrightarrow&
(\overline\partial+[\partial , \epsilon\cdot])\circ  (1 - \epsilon\epsilon^{\ast}) ( \sigma)=0.
\end{eqnarray*}

\end{proof}

\section{Locally extensions}
In this section  , we get the following local extensions of $\overline\partial$-closed forms on a smooth family $\pi: \mathcal{X} \to \Delta \subset \mathbb{C}^1 $ of compact generalized Hermitian manifolds. The method we used is parallel to that in \cite{RaoWanZhao03}  which originally came from \cite{To, Ti, Cl, Ra, Sun01, Sun02, Zh, RaoZhao04, RaoWanZhao02, RaoWanZhao01}. This is also an extension of  local extensions of canonical forms in \cite{KW02}.

\begin{theorem}\label{construction}
Let $(X,G)$ be a compact Hermitian generalized complex manifold. Assume that $X\in \mathbb{B}^{k-1} \cap \mathbb{S}^{k+1}.$ Then for any $\sigma_{00} \in H_{\overline\partial }^k (X) ~(-n\le k \le n)$,  we can choose

 \begin{eqnarray*}
\sigma_t = \sigma _{00}+\sum_{i,j\ge 1} t^i \bar{t}^j \sigma_{ij} \in U^k(X),
\end{eqnarray*}
such that $\mathcal{E}(\sigma_t) \in U^k(X_{\epsilon(t)})$ and  $\overline\partial_t \circ \mathcal{E}(\sigma_t)=0$, where $\overline\partial_t$ is
the $\overline\partial$-operator on $X_{\epsilon(t)}$.

\end{theorem}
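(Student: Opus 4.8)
The plan is to convert the statement into an equation on the \emph{fixed} central fibre by means of the criterion already established. By Theorem \ref{k-cri}, and since we will work in the regime $\|\epsilon\|_{L^{\infty}}<1$ where $(1-\epsilon\epsilon^{\ast})$ is invertible, the desired conclusion $\overline\partial_t\circ\mathcal{E}(\sigma_t)=0$ is equivalent to
$([\partial,\epsilon\cdot]+\overline\partial)\circ(1-\epsilon\epsilon^{\ast})(\sigma_t)=0$
on $X$, and this is the single equation I would actually solve. For the seed term, I would first use $X\in\mathbb{B}^{k-1}$ together with Proposition \ref{d-closed representation} to replace the given representative $\sigma_{00}\in H^k_{\overline\partial}(X)$ by a $d$-closed one, so that $\partial\sigma_{00}=\overline\partial\sigma_{00}=0$. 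I then expand $\epsilon=\sum_{m\ge 1}t^m\epsilon_m$ (holomorphic in $t$) and $\epsilon^{\ast}=\sum_{m\ge 1}\bar t^{\,m}\epsilon^{\ast}_m$ (anti-holomorphic), seek $\sigma_t=\sigma_{00}+\sum_{i,j\ge 1}t^i\bar t^{\,j}\sigma_{ij}\in U^k(X)$, substitute into the criterion equation, and collect the coefficient of $t^i\bar t^{\,j}$.

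The pure orders explain the shape of the ansatz. Since $\epsilon\epsilon^{\ast}$ contributes only at mixed orders, the coefficient of $t^i\bar t^{\,0}$ reduces, after using $\partial\sigma_{00}=0$, to $[\partial,\epsilon_i\cdot]\sigma_{00}=\partial(\epsilon_i\cdot\sigma_{00})$. I would show this vanishes directly from the $d$-closedness of $\sigma_{00}$, the integrability relation $d_L\epsilon=\frac{1}{2}[\epsilon,\epsilon]$ (Lemma \ref{holcri}), and the Clifford-bracket identity of Lemma \ref{braket}, a Tian--Todorov-type computation; the conjugate argument handles the orders $(0,j)$. This is exactly what permits the choice $\sigma_{i0}=\sigma_{0j}=0$ and accounts for the range $i,j\ge 1$ in the statement.

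For a genuinely mixed order $(i,j)$ with $i,j\ge 1$, isolating the top unknown produces a recursion $\overline\partial\sigma_{ij}=R_{ij}$, where $R_{ij}\in U^{k+1}(X)$ is an explicit expression in the $\epsilon_m,\epsilon^{\ast}_m$ and the previously constructed $\sigma_{pq}$ of strictly lower order. The heart of the argument is an induction showing that $R_{ij}$ lies in the class to which $X\in\mathbb{S}^{k+1}$ applies, i.e. $R_{ij}=\partial\varphi_{ij}$ for some $\varphi_{ij}\in U^{k+2}(X)$ with $\overline\partial\partial\varphi_{ij}=0$. The $\partial$-exactness of the leading piece $-\partial(\epsilon\cdot\tau)$, where $\tau:=(1-\epsilon\epsilon^{\ast})\sigma_t$, is manifest; the remaining terms I would absorb by running a \emph{strengthened} induction hypothesis that simultaneously records the $\partial$-exact structure of the chosen $\sigma_{pq}$ and uses integrability, while the compatibility $\overline\partial R_{ij}=0$ follows by applying $\overline\partial$ to the truncated master equation and invoking the lower orders already solved. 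Granting this, $\mathbb{S}^{k+1}$ yields $\sigma_{ij}\in U^k(X)$; for definiteness, and to prepare convergence, I would select the canonical Bott--Chern solution via Proposition \ref{eqn02} and Lemma \ref{laplacianbca}.

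Finally I would establish convergence of the formal series for $|t|$ small by the standard Kodaira--Spencer majorant method: the explicit Green-operator solution yields uniform elliptic estimates $\|\sigma_{ij}\|\le C(\dots)$ in a suitable H\"older or Sobolev norm, which, since $\|\epsilon\|_{L^{\infty}}<1$, can be dominated by the coefficients of a convergent power series, producing a genuine $\sigma_t$ of the stated form with $\mathcal{E}(\sigma_t)\in U^k(X_{\epsilon(t)})$ and $\overline\partial_t\circ\mathcal{E}(\sigma_t)=0$. I expect the main obstacle to be the inductive structural step of the previous paragraph, namely verifying \emph{simultaneously} the $\partial$-exactness and the $\overline\partial$-closedness of $R_{ij}$ so that $\mathbb{S}^{k+1}$ is genuinely applicable, since this is where the integrability of $\epsilon$ and the Clifford identities must be combined in precisely the right way; the convergence estimate is the second, more technical but template-following, hurdle.
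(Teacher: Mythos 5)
Your skeleton matches the paper's (reduce via Theorem \ref{k-cri} to $([\partial,\epsilon\cdot]+\overline\partial)(1-\epsilon\epsilon^{\ast})(\sigma_t)=0$, normalize $\sigma_{00}$ to be $d$-closed using $\mathbb{B}^{k-1}$ and Proposition \ref{d-closed representation}, solve order by order, then run a Kodaira--Spencer majorant argument), but the step you yourself flag as ``the main obstacle'' is precisely the content of the proof, and you do not supply it. The paper's device is to replace the single equation by the over-determined system
\begin{eqnarray*}
\partial\tilde{\sigma}_t=0,\qquad \overline\partial\tilde{\sigma}_t=-\partial(\epsilon\cdot\tilde{\sigma}_t),\qquad \tilde{\sigma}_t:=(1-\epsilon\epsilon^{\ast})(\sigma_t),
\end{eqnarray*}
which implies the original equation because $[\partial,\epsilon\cdot]\tilde\sigma_t=\partial(\epsilon\cdot\tilde\sigma_t)-\epsilon\cdot\partial\tilde\sigma_t$. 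With the auxiliary condition $\partial\tilde\sigma_{pq}=0$ carried along in the induction, the right-hand side $R_{pq}=-\sum\partial(\epsilon_{ik}\cdot\tilde\sigma_{jl})$ is \emph{manifestly} $\partial$-exact, and the hypothesis needed for $\mathbb{S}^{k+1}$, namely $\overline\partial\partial(\sum\epsilon_{ik}\cdot\tilde\sigma_{jl})=0$, is verified by the Tian--Todorov computation (integrability plus Lemma \ref{braket}) \emph{using} $\partial\tilde\sigma_{jl}=0$ at lower orders. The auxiliary condition is then restored at the new order by correcting the solution $\tilde\sigma^1_{pq}=-\overline\partial^{\ast}G_{\overline\partial}R_{pq}$ to $\tilde\sigma_{pq}=\tilde\sigma^1_{pq}+\overline\partial\tilde\sigma^2_{pq}$ with $\tilde\sigma^2_{pq}=-(\partial\overline\partial)^{\ast}G_{BC}\partial\tilde\sigma^1_{pq}$; this is where $\mathbb{B}^{k-1}$ and Proposition \ref{eqn02} are used at \emph{every} order, not just for the seed. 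Your ``strengthened induction hypothesis recording the $\partial$-exact structure of $\sigma_{pq}$'' is the right instinct but is never made precise, and your invocation of Proposition \ref{eqn02} is attached to the wrong equation (it solves $\partial\overline\partial x=y$, not $\overline\partial\sigma_{ij}=R_{ij}$); without identifying the correct auxiliary condition ($\partial$-closedness, not $\partial$-exactness, of $\tilde\sigma_{pq}$) the induction does not close.

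A second, outright error: you claim the pure-order obstructions $\partial(\epsilon_i\cdot\sigma_{00})$ vanish by a Tian--Todorov computation, ``which is exactly what permits the choice $\sigma_{i0}=\sigma_{0j}=0$.'' This is false in general: $d$-closedness of $\sigma_{00}$ and integrability of $\epsilon$ only give that $\partial(\epsilon_{10}\cdot\tilde\sigma_{00})$ is $\overline\partial$-closed (this is exactly what the paper proves in order to apply $\mathbb{S}^{k+1}$), not that it is zero. Accordingly the paper constructs a generically nonzero first-order term $\tilde\sigma_{10}=-\overline\partial^{\ast}G_{\overline\partial}\partial(\epsilon_{10}\cdot\tilde\sigma_{00})+\overline\partial\tilde\sigma^2_{10}$, and its recursion runs over all $p+q\ge 1$ (the restriction to $i,j\ge 1$ in the theorem's displayed ansatz is an inconsistency of the paper's own statement, not something your vanishing claim can rescue).
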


\begin{proof}
Step 1. We construct $\sigma_t$.\\

Set $\tilde{\sigma_t}:=(1-\epsilon\epsilon^{\ast})(\sigma_t).$ By Theorem \ref{k-cri},

\begin{eqnarray*}
\overline\partial_t(\mathcal{E} (\sigma_t))=0
&\Leftrightarrow& ([\partial, \epsilon\cdot]+\overline\partial)
\circ (1 - \epsilon\epsilon^{\ast}) ( \sigma_t)=0\\
&\Leftrightarrow& ([\partial, \epsilon\cdot]+\overline\partial)
\circ \tilde{\sigma_t}=0.
\end{eqnarray*}

We resolve the equation as
\begin{eqnarray*}
\partial (\tilde{\sigma}_t)&=&0,\\
\overline\partial (\tilde{\sigma}_t)&=&-\partial (\epsilon(t) \cdot \tilde{\sigma}_t).
\end{eqnarray*}

Substitute
\begin{eqnarray*}
\epsilon(t):=\sum_{i+j\ge 1} t^i \bar{t}^j\epsilon_{ij},\\
\sigma_t:=\sigma_{00}+\sum_{p+q\ge 1} t^p \bar{t}^q\sigma_{pq},\\
\tilde{\sigma_t}:=\tilde{\sigma}_{00}+\sum_{p+q\ge 1} t^p \bar{t}^q\tilde{\sigma}_{pq}
\end{eqnarray*}
into the above formula and compare the coefficients of $t^p \bar{t}^q$, we get that
\begin{eqnarray}
\partial \tilde{\sigma}_{pq}&=&0,  \label{hol01}\\
\overline\partial \tilde{\sigma}_{pq} &=&-\sum_{i+j=p, k+l=q, i+k\ge 1}\partial (\epsilon_{ik} \cdot \tilde{\sigma}_{jl}).~(p+q\ge 1)\label{hol02}
\end{eqnarray}

Since
\begin{eqnarray*}
\tilde{\sigma_t}&:=&(1-\epsilon\epsilon^{\ast})(\sigma_t)\\
&:=&(1-(\sum_{i+j\ge 1} t^i \bar{t}^j\epsilon_{ij})(\sum_{k+l\ge 1} t^k \bar{t}^l\epsilon^{\ast}_{kl}))(\sigma_{00}+\sum_{p+q\ge 1} t^p \bar{t}^q\sigma_{pq})\\
&=&\sigma_{00} + \sum_{p+q\ge 1} t^p \bar{t}^q\sigma_{pq} -\sum_{i+j\ge 1, k+l\ge 1} t^{i+k} \bar{t}^{j+l}\epsilon_{ij} \epsilon^{\ast}_{kl}(\sigma_{00})\\
&& -\sum_{i+j\ge 1, k+l\ge 1, p+q\ge 1} t^{i+k+p} \bar{t}^{j+l+q}\epsilon_{ij} \epsilon^{\ast}_{kl}(\sigma_{pq} ),
\end{eqnarray*}
we have that $\tilde{\sigma}_{00}=\sigma_{00}$.
By the assumption that $X\in \mathbb{B}^{k-1}$ and Proposition \ref{d-closed representation}, we have that
$
d\tilde{\sigma}_{00}=0,
$
and thus
$
\partial \tilde{\sigma}_{00}=0.
$

Now we construct $\tilde{\sigma}_{pq}~ (p+q\ge 1)$ by induction on $p+q$.
Since
\begin{eqnarray*}
\overline\partial \partial (\epsilon_{10}\cdot \tilde{\sigma}_{00})&=& - \partial \overline\partial(\epsilon_{10}\cdot \tilde{\sigma}_{00})\\
&=&- \partial (d_{L} \epsilon_{10}\cdot \tilde{\sigma}_{00}+\epsilon_{10}\cdot \overline\partial\tilde{\sigma}_{00})\\
&=&0,  ~~(\mbox{since} ~~\epsilon_{10} \in  H_{\overline\partial }^2 (X):=Ker \Delta_{d_L}\cap \wedge^2L^{\ast})\\
\overline\partial \partial (\epsilon_{01}\cdot \tilde{\sigma}_{00})&=&0,
\end{eqnarray*}

by the assumption that $X\in \mathbb{S}^{k+1}, $ we get that the equation
\begin{eqnarray*}
\overline\partial \tilde{\sigma}_{10} &=&-\partial (\epsilon_{10} \cdot \tilde{\sigma}_{00})
\end{eqnarray*}
has a solution $\tilde{\sigma}_{10}^1=-\overline\partial ^{\ast}G_{\overline\partial}\partial (\epsilon_{10} \cdot \tilde{\sigma}_{00})$.

To fulfill the equation $\partial \tilde{\sigma}_{10}=0$, we need to find some $\tilde{\sigma}_{10}^2$, such that
\begin{eqnarray*}
\partial(\tilde{\sigma}_{10}^1 + \overline\partial \tilde{\sigma}_{10}^2)=0.
\end{eqnarray*}
That is,
\begin{eqnarray*}
\partial\tilde{\sigma}_{10}^1 =- \partial\overline\partial \tilde{\sigma}_{10}^2.
\end{eqnarray*}

So by the assumption that $X\in \mathbb{B}^{k-1}  $ and  Proposition \ref{eqn02}, we have a solution $\tilde{\sigma}_{10}^2=-(\partial \overline\partial)^{\ast}G_{BC}\partial \tilde{\sigma}_{10}^1,$ and thus $\tilde{\sigma}_{10}:=\tilde{\sigma}_{10}^1 + \overline\partial \tilde{\sigma}_{10}^2$ satisfies both Formula \ref{hol01} and Formula \ref{hol02}.
In details, $\overline\partial\tilde{\sigma}_{10}:=\overline\partial\tilde{\sigma}_{10}^1 + \overline\partial^2 \tilde{\sigma}_{10}^2
=\overline\partial\tilde{\sigma}_{10}^1=-\partial (\epsilon_{10} \cdot \tilde{\sigma}_{00}).$
Similar, we can get $\tilde{\sigma}_{01}.$

If we have already got $\tilde{\sigma}_{pq}$ which satisfies both Formula \ref{hol01} and \ref{hol02}, where $p+q=1,2,\cdots , N-1$. For $p+q=N,$
we have that
\begin{eqnarray*}
&&-\overline\partial \partial (\sum_{i+j=p, k+l=q, i+k\ge 1}\epsilon_{ik}\cdot  \tilde{\sigma}_{jl})\\
&=&\partial \overline\partial (\sum_{i+j=p, k+l=q, i+k\ge 1}\epsilon_{ik}\cdot  \tilde{\sigma}_{jl})\\
&=&\partial  (\sum_{i+j=p, k+l=q, i+k\ge 1}d_L\epsilon_{ik}\cdot  \tilde{\sigma}_{jl}+ \epsilon_{ik}\cdot  \overline\partial \tilde{\sigma}_{jl})\\
&=&\partial  (\sum_{r+m+j=p, s+n+l=q, r+s\ge 1, m+n\ge 1}\frac{1}{2}[\epsilon_{rs}, \epsilon_{mn}]\cdot  \tilde{\sigma}_{jl}\\
&&+ \sum_{i+j=p, k+l=q, i+k\ge 1}\epsilon_{ik}\cdot  \overline\partial \tilde{\sigma}_{jl})\\
&=&\partial  (\sum_{r+m+j=p, s+n+l=q, r+s\ge 1, m+n\ge 1}\frac{1}{2}[\epsilon_{rs}, \epsilon_{mn}]\cdot  \tilde{\sigma}_{jl}\\
&&- \sum_{i+r+m=p, k+s+n=q, i+k\ge 1, r+s\ge 1}\epsilon_{ik}\cdot  \partial \epsilon_{rs}\tilde{\sigma}_{mn})\\
&=&\partial  (\sum_{r+m+j=p, s+n+l=q, r+s\ge 1, m+n\ge 1}\frac{1}{2}(-\partial (\epsilon_{rs}\cdot \epsilon_{mn} \cdot \tilde{\sigma}_{jl})\\
&&-\epsilon_{rs}\cdot \epsilon_{mn} \cdot \partial\tilde{\sigma}_{jl} + \epsilon_{rs}\cdot \partial(\epsilon_{mn} \cdot \tilde{\sigma}_{jl})+
 \epsilon_{mn}\cdot \partial(\epsilon_{rs} \cdot \tilde{\sigma}_{jl}))\\
&&- \sum_{i+r+m=p, k+s+n=q, i+k\ge 1, r+s\ge 1}\epsilon_{ik}\cdot  \partial \epsilon_{rs}\tilde{\sigma}_{mn})\\
&=&\partial  (\sum_{r+m+j=p, s+n+l=q, r+s\ge 1, m+n\ge 1} \epsilon_{rs}\cdot \partial(\epsilon_{mn} \cdot \tilde{\sigma}_{jl})\\
&&- \sum_{i+r+m=p, k+s+n=q, i+k\ge 1, r+s\ge 1}\epsilon_{ik}\cdot  \partial (\epsilon_{rs}\cdot \tilde{\sigma}_{mn})\\
&=&0.
\end{eqnarray*}

The third equality holds since the integrable condition; the fourth equality holds  since the induction; the fifth equality holds  since Lemma \ref{braket};
and the sixth equality holds  since $\partial ^2=0$ and $\partial \tilde{\sigma}_{pq}=0~(p+q\le N-1)$.

So
\begin{eqnarray*}
\overline\partial \tilde{\sigma}_{pq} =-\sum_{i+j=p, k+l=q, i+k\ge 1}\partial (\epsilon_{ik} \cdot \tilde{\sigma}_{jl}).~(p+q =N)
\end{eqnarray*}

has a solution $\tilde{\sigma}_{pq}^1=-\overline\partial ^{\ast}G_{\overline\partial}\partial (\sum_{i+j=p, k+l=q, i+k\ge 1}\epsilon_{ik} \cdot \tilde{\sigma}_{jl})$. And we can also find $\tilde{\sigma}_{pq}^2=-(\partial \overline\partial)^{\ast}G_{BC}\partial \tilde{\sigma}_{pq}^1,$ such that $\tilde{\sigma}_{pq}:=\tilde{\sigma}_{pq}^1 + \overline\partial \tilde{\sigma}_{pq}^2$ satisfies both Formula \ref{hol01} and Formula \ref{hol02}
by using the same way as that in finding $\tilde{\sigma}_{10}^2$.

Step 2.  We give the regularity of $\tilde{\sigma}_t$ by using the elliptic estimates which is similar to that in Section 8 in Appendix in \cite{Ko1}, Proposition 3.14 in \cite{RaoWanZhao03}, or, Proposition 3.15 in \cite{RaoWanZhao02}.

For the power series $a(|t|):=\sum _{m=1}^{\infty}a_m |t|^m, b(|t|):=\sum _{m=1}^{\infty}b_m |t|^m$ with real positive coefficients, if $a_m \le b_m$ for any $m\in \mathbb{N}$ , we denote it as $a(|t|)\ll b(|t|)$.
Consider an important power series
\begin{eqnarray*}
A(|t|):=\frac{\beta}{16\gamma}\sum_{m=1}^{\infty}\frac{\gamma^{m}}{m^2}|t|^m:=\sum_{m=1}^{\infty}A_m|t|^m,
\end{eqnarray*}
where $\beta, \gamma$ are positive constant.
This power series converges for $|t|< \frac{1}{\gamma}$ and has the following property:
\begin{eqnarray*}
(A(|t|))^2\ll \frac{\beta}{\gamma}A(|t|).
\end{eqnarray*}

First, we prove that $||\tilde{\sigma}_t||_{k+\alpha}\ll A(t)$, where $||\cdot||_{k+\alpha}$ is the H\"older norms.
By $X$ is compact, Lemma 6 in \cite{Li} and Proposition 7.4 in \cite{Ko1}, for any differential form $\sigma$ , we have that
\begin{eqnarray*}
||\partial\sigma||_{k+\alpha}&\le& C_1||\sigma||_{k+1+\alpha},\\
||\overline\partial\sigma||_{k+\alpha}&\le& C_2||\sigma||_{k+1+\alpha},\\
||\partial^{\ast}\sigma||_{k+\alpha}&\le& C_3||\sigma||_{k+1+\alpha},\\
||\overline\partial^{\ast}\sigma||_{k+\alpha}&\le& C_4||\sigma||_{k+1+\alpha},\\
||G_{\overline\partial}\sigma||_{k+\alpha}&\le& C_5||\sigma||_{k-2+\alpha},\\
||G_{BC}\sigma||_{k+\alpha}&\le& C_6||\sigma||_{k-4+\alpha},
\end{eqnarray*}
where $C_i~(1\le i \le 6)$ are positive constants which is only depend on $k,\alpha$ and is independent of the choice of $\sigma$.

We have already known that $||\epsilon(|t|)||_{k+\alpha}\ll  A(|t|)$, where $\epsilon(t)$ is Beltrami differentials(Section 4 in \cite{Go2}, or Theorem 5 in \cite{Li}).
If we choose $\beta:=16(||\tilde{\sigma}_{10}||_{k+\alpha}+||\tilde{\sigma}_{01}||_{k+\alpha})$, we have that $(||\tilde{\sigma}_{10}||_{k+\alpha}+||\tilde{\sigma}_{01}||_{k+\alpha})|t|
\ll A(|t|)$.
By induction on $i=p+q$, \\if $\sum_{i=0}^{N-1}(\sum _{p+q=i}||\tilde{\sigma}_{pq}||_{k+\alpha})|t|^{i}
\ll A(|t|)$, we have that there exists  positive constants $C_{k,\alpha}, K_{k,\alpha}$ which only depend on $k,\alpha$ such that
\begin{eqnarray*}
&&(\sum_{p+q=N}||\tilde{\sigma}_{pq}||_{k+\alpha}) |t|^N\\
&\le&(\sum_{p+q=N}\sum _{i+k=p,j+l=q, i+j\ge1 } ||\overline\partial^{\ast}G_{\overline\partial}\partial
(\epsilon_{ij}\tilde{\sigma}_{kl})||_{k+\alpha}+||\overline\partial \overline\partial^{\ast}\partial^{\ast}G_{BC}\partial (\epsilon_{ij}\tilde{\sigma}_{kl})||_{k+\alpha})|t|^N\\
&\le& C_{k, \alpha}(\sum_{p+q=N}\sum _{i+k=p,j+l=q, i+j\ge 1} ||
\epsilon_{ij}\tilde{\sigma}_{kl}||_{k+\alpha})|t|^N\\
&\le& K_{k,\alpha} \sum_{i+r+j+s=N, i+j\ge1} ||\epsilon_{ij}||_{k+\alpha}||\tilde{\sigma}_{rs}||_{k+\alpha}|t|^N~~(\mbox{Lemma 8.1 on Page 455 in \cite{Ko1}})  \\
&\ll& K_{k,\alpha}(A(|t|))^2\le\frac{K_{k,\alpha}\beta}{\gamma}A(|t|).
\end{eqnarray*}

Hence putting $\gamma=K_{k,\alpha}\beta,$ we obtain that
\begin{eqnarray*}
\sum_{p+q=N}||\tilde{\sigma}_{pq}||_{k+\alpha}|t|^N\ll A(|t|).
\end{eqnarray*}

Then we have that
\begin{eqnarray*}
||\tilde{\sigma}||_{k+\alpha}\ll A(|t|).
\end{eqnarray*}

Now, we prove that $\tilde{\sigma}_t$ is a real analytic family of forms in $t$. By  the proof above, we have that
\begin{eqnarray*}
\tilde{\sigma}_t=-\overline\partial^{\ast}G_{\overline\partial}\partial (\epsilon \tilde{\sigma}_t)+
\overline\partial \overline\partial^{\ast} \partial^{\ast} G_{BC} \partial (\epsilon \tilde{\sigma}_t)+\tilde{\sigma}_{00}.
\end{eqnarray*}

Take $\Delta_{\overline\partial} $ on both sides, we have that
\begin{eqnarray*}
\Delta_{\overline\partial}\tilde{\sigma}_t&=&-\Delta_{\overline\partial}\overline\partial^{\ast}G_{\overline\partial}\partial (\epsilon \tilde{\sigma}_t)+
\Delta_{\overline\partial}\overline\partial \overline\partial^{\ast} \partial^{\ast} G_{BC} \partial (\epsilon \tilde{\sigma}_t)+\Delta_{\overline\partial}\tilde{\sigma}_{00}\\
&=&-\overline\partial^{\ast}\partial (\epsilon \tilde{\sigma}_t)+
\overline\partial \overline\partial^{\ast}  \overline\partial \overline\partial^{\ast} \partial^{\ast} G_{BC} \partial (\epsilon \tilde{\sigma}_t)+\overline\partial \overline\partial^{\ast}\tilde{\sigma}_{00}.
\end{eqnarray*}

Now, for each $l\in \mathbb{N}, $ we choose a smooth function $\eta^l(t)$, such that

\begin{eqnarray*}
\eta^l(t) =
\{
\begin{array}{cc}
1   ,   &  |t| \le (\frac{1}{2}+\frac{1}{2^{l+1}})r;\\
0  ,    &  |t| \ge (\frac{1}{2}+\frac{1}{2^{l}})r.
\end{array}
\end{eqnarray*}

We also choose a partition $\{p_{\alpha}\}$ of a covering $\{U_{\alpha}\}_{\alpha= 1}^N$ on $X$. That is, for any $x\in X, \sum_{\alpha=1}^N p_{\alpha}(x)=1, $ and supp $p_{\alpha} \subset U_{\alpha}$.
Set
\begin{eqnarray*}
p_{\alpha}^l(x,t):=p_{\alpha}(x)\eta^l(t).
\end{eqnarray*}

We first prove that $p_{\alpha}^3 \tilde{\sigma}_t$ is $C^{k+1+\alpha}$. Let $\triangle_i^h$ be the difference quotient which is defined on Page 367 in \cite{Ko1}. We have that
\begin{eqnarray*}
\Delta_{\overline\partial}\triangle_i^h p_{\alpha}^3 \tilde{\sigma}_t&=&
(\Delta_{\overline\partial}\triangle_i^h p_{\alpha}^3 \tilde{\sigma}_t-
\triangle_i^h \Delta_{\overline\partial}  p_{\alpha}^3 \tilde{\sigma}_t)+(\triangle_i^h  \Delta_{\overline\partial}p_{\alpha}^3\tilde{\sigma}_t-
\triangle_i^h p_{\alpha}^3 \Delta_{\overline\partial}\tilde{\sigma}_t)
+\triangle_i^h p_{\alpha}^3  \Delta_{\overline\partial}\tilde{\sigma}_t\\
&=&\triangle_i^h p_{\alpha}^3(-\overline\partial^{\ast}\partial (\epsilon \tilde{\sigma}_t)+
\overline\partial \overline\partial^{\ast}  \overline\partial \overline\partial^{\ast} \partial^{\ast} G_{BC} \partial (\epsilon \tilde{\sigma}_t)+\overline\partial \overline\partial^{\ast}\tilde{\sigma}_{00})\\
&&(\Delta_{\overline\partial}\triangle_i^h p_{\alpha}^3 \tilde{\sigma}_t-
\triangle_i^h \Delta_{\overline\partial}  p_{\alpha}^3 \tilde{\sigma}_t)+(\triangle_i^h  \Delta_{\overline\partial}p_{\alpha}^3\tilde{\sigma}_t-
\triangle_i^h p_{\alpha}^3 \Delta_{\overline\partial}\tilde{\sigma}_t)\\
&:=&F^1.
\end{eqnarray*}

Since $\Delta_{\overline\partial}$ is an operator of diagonal type in the principle part which satisfies the assumption in Theorem 2.3 on Page 417 in \cite{Ko1}, we have the estimate
\begin{eqnarray*}
||\triangle_i^h p_{\alpha}^3 \tilde{\sigma}_t||_{k+\alpha}&\le&C_k(||\Delta_{\overline\partial}\triangle_i^h p_{\alpha}^3 \tilde{\sigma}_t||_{k-2+\alpha}
+||\triangle_i^h p_{\alpha}^3 \tilde{\sigma}_t||_{0})\\
&=&C_k(||F^1||_{k-2+\alpha}
+||\triangle_i^h p_{\alpha}^3 \tilde{\sigma}_t||_{0}),
\end{eqnarray*}
where $C_k$ is a positive constant which only depend on $k, \alpha$.

Now we make an estimate of $F^1$.
\begin{eqnarray*}
||F^1||_{k-2+\alpha}&\le& ||\triangle_i^h p_{\alpha}^3\overline\partial^{\ast}\partial (\epsilon \tilde{\sigma}_t)||_{k-2+\alpha}+||\triangle_i^hp_{\alpha}^3
\overline\partial \overline\partial^{\ast}  \overline\partial \overline\partial^{\ast} \partial^{\ast} G_{BC} \partial (\epsilon \tilde{\sigma}_t)||_{k-2+\alpha}\\
&&+||\triangle_i^hp_{\alpha}^3\overline\partial \overline\partial^{\ast}\tilde{\sigma}_{00}||_{k-2+\alpha}+||\mbox{lower-order terms of }~\tilde{\sigma}_t||_{k-2+\alpha}\\
\end{eqnarray*}

Applying Lemma 8.1, Lemma 8.2 on Page 455 in \cite{Ko1}, we get that there exists some positive constant $L_i, L'_i (1\le i \le 4)$  which only depend on $k, \alpha$ such that
\begin{eqnarray*}
||\triangle_i^h p_{\alpha}^3\overline\partial^{\ast}\partial (\epsilon \tilde{\sigma}_t)||_{k-2+\alpha} &\le&
L_1||p_{\alpha}^1 \epsilon ||_0||\triangle_i^h  p_{\alpha}^3 \tilde{\sigma}_t  ||_{k+\alpha} + L_1^{'}||p_{\alpha}^1\epsilon    ||_{k+\alpha}|| p_{\alpha}^3   \tilde{\sigma}_t||_{k+\alpha},
\end{eqnarray*}

\begin{eqnarray*}
||\triangle_i^hp_{\alpha}^3
\overline\partial \overline\partial^{\ast}  \overline\partial \overline\partial^{\ast} \partial^{\ast} G_{BC} \partial (\epsilon \tilde{\sigma}_t)||_{k-2+\alpha}&\le&
L_2||p_{\alpha}^1\epsilon  ||_0|| \triangle_i^h p_{\alpha}^3 \tilde{\sigma}_t  ||_{k+\alpha} + L_2^{'}||p_{\alpha}^1\epsilon     ||_{k+\alpha}|| p_{\alpha}^3   \tilde{\sigma}_t||_{k+\alpha},
\end{eqnarray*}

\begin{eqnarray*}
||\triangle_i^hp_{\alpha}^3\overline\partial \overline\partial^{\ast}\tilde{\sigma}_{00}||_{k-2+\alpha}&\le&
L_3||\tilde{\sigma}_{00}||_{k+1+\alpha},
\end{eqnarray*}

\begin{eqnarray*}
||\mbox{lower-order terms of }~\tilde{\sigma}_t||_{k-2+\alpha}&\le&
 L_4^{'}||p_{\alpha}^1\epsilon     ||_{k+\alpha}|| p_{\alpha}^3   \tilde{\sigma}_t||_{k+\alpha}.
\end{eqnarray*}

Then we have that there exists some positive constant $M_0, M_k$ which only depend on $k, \alpha$ such that
\begin{eqnarray*}
||F^1||_{k+\alpha}&\le&M_0A(r)|| \triangle_i^h p_{\alpha}^3 \tilde{\sigma}_t  ||_{k+\alpha}+M_k||p_{\alpha}^1\epsilon     ||_{k+\alpha}|| p_{\alpha}^3   \tilde{\sigma}_t||_{k+\alpha}+M_k||\tilde{\sigma}_{00}||_{k+1+\alpha}
\end{eqnarray*}

Thus,
\begin{eqnarray*}
||\triangle_i^h p_{\alpha}^3 \tilde{\sigma}_t||_{k+\alpha}&\le&
C_k M_0 A(r) || \triangle_i^h p_{\alpha}^3 \tilde{\sigma}_t  ||_{k+\alpha}+C_kM_k||p_{\alpha}^1  \epsilon   ||_{k+\alpha}|| p_{\alpha}^3   \tilde{\sigma}_t||_{k+\alpha}
+C_k || p_{\alpha}^3   \tilde{\sigma}_t||_1\\
&&+C_kM_k||\tilde{\sigma}_{00}||_{k+1+\alpha}
\end{eqnarray*}

We choose a sufficient small $r$ which only depend on $k, \alpha$  so that
\begin{eqnarray*}
M_0C_k A(r)<\frac{1}{2}
\end{eqnarray*}

holds. Then we obtain that
\begin{eqnarray*}
||\triangle_i^h p_{\alpha}^3 \tilde{\sigma}_t||_{k+\alpha}&\le&
2C_kM_k||p_{\alpha}^1 \epsilon    ||_{k+\alpha}|| p_{\alpha}^3   \tilde{\sigma}_t||_{k+\alpha}
+2C_k || p_{\alpha}^3   \tilde{\sigma}_t||_1\\
&&+2C_kM_k||\tilde{\sigma}_{00}||_{k+1+\alpha}
\end{eqnarray*}

Since $\tilde{\sigma}_t$ is $C^{k+\alpha}$ and $\tilde{\sigma}_{00}$ is $C^{\infty}$, the right side of the above formula is  bounded. So by Lemma 8.2(iii) on Page 455 in \cite{Ko1}, we have that $p_{\alpha}^3 \tilde{\sigma}_t$
is $C^{k+1+\alpha}$.

Now, we shall prove that $p_{\alpha}^5 \tilde{\sigma}_t$
is $C^{k+2+\alpha}$. Denote $D_j$ as $\frac{\partial}{\partial z^j}$ or $\frac{\partial}{\partial z^{\overline{j}}}$. We have that

\begin{eqnarray*}
\Delta_{\overline\partial}\triangle_i^h D_j p_{\alpha}^5 \tilde{\sigma}_t&=&
(\Delta_{\overline\partial}\triangle_i^h D_j p_{\alpha}^5 \tilde{\sigma}_t-\triangle_i^h \Delta_{\overline\partial}D_j p_{\alpha}^5 \tilde{\sigma}_t)
+(\triangle_i^h \Delta_{\overline\partial}D_j p_{\alpha}^5 \tilde{\sigma}_t-\triangle_i^h D_j \Delta_{\overline\partial}p_{\alpha}^5 \tilde{\sigma}_t)\\
&&+(\triangle_i^h D_j \Delta_{\overline\partial}p_{\alpha}^5 \tilde{\sigma}_t-\triangle_i^h D_j p_{\alpha}^5\Delta_{\overline\partial} \tilde{\sigma}_t)
+\triangle_i^h D_j p_{\alpha}^5\Delta_{\overline\partial} \tilde{\sigma}_t\\
&=&\triangle_i^h D_j p_{\alpha}^5(-\overline\partial^{\ast}\partial (\epsilon \tilde{\sigma}_t)+
\overline\partial \overline\partial^{\ast}  \overline\partial \overline\partial^{\ast} \partial^{\ast} G_{BC} \partial (\epsilon \tilde{\sigma}_t)+\overline\partial \overline\partial^{\ast}\tilde{\sigma}_{00})\\
&&+(\Delta_{\overline\partial}\triangle_i^h D_j p_{\alpha}^5 \tilde{\sigma}_t-\triangle_i^h \Delta_{\overline\partial}D_j p_{\alpha}^5 \tilde{\sigma}_t)
+(\triangle_i^h \Delta_{\overline\partial}D_j p_{\alpha}^5 \tilde{\sigma}_t-\triangle_i^h D_j \Delta_{\overline\partial}p_{\alpha}^5 \tilde{\sigma}_t)\\
&&+(\triangle_i^h D_j \Delta_{\overline\partial}p_{\alpha}^5 \tilde{\sigma}_t-\triangle_i^h D_j p_{\alpha}^5\Delta_{\overline\partial} \tilde{\sigma}_t)\\
&=&\triangle_i^h (-\overline\partial^{\ast}\partial D_j p_{\alpha}^5(\epsilon \tilde{\sigma}_t)+
\overline\partial \overline\partial^{\ast}  \overline\partial \overline\partial^{\ast} \partial^{\ast} G_{BC} \partial D_j p_{\alpha}^5 (\epsilon \tilde{\sigma}_t))+\triangle_i^hD_j p_{\alpha}^5\overline\partial \overline\partial^{\ast}\tilde{\sigma}_{00}\\
&&+\mbox{lower-order terms of }~\tilde{\sigma}_t\\
&:=&F^2.
\end{eqnarray*}

 Since $\Delta_{\overline\partial}$ is an operator of diagonal type in the principle part which satisfies the assumption in  Theorem 2.3 on Page 417 in \cite{Ko1}, we have the estimate
\begin{eqnarray*}
||\triangle_i^h D_j p_{\alpha}^5 \tilde{\sigma}_t||_{k+\alpha}&\le&C_k(||\Delta_{\overline\partial}\triangle_i^hD_j p_{\alpha}^5 \tilde{\sigma}_t||_{k-2+\alpha}
+||\triangle_i^hD_j p_{\alpha}^5 \tilde{\sigma}_t||_{0})\\
&=&C_k(||F^2||_{k-2+\alpha}
+||\triangle_i^h D_jp_{\alpha}^5 \tilde{\sigma}_t||_{0}),
\end{eqnarray*}
where $C_k$ is the same positive constant as that above which only depend on $k, \alpha$.

Now we make an estimate of $F^2$.
\begin{eqnarray*}
||F^2||_{k-2+\alpha}&\le& ||\triangle_i^h \overline\partial^{\ast}\partial D_jp_{\alpha}^5(\epsilon \tilde{\sigma}_t)||_{k-2+\alpha}+||\triangle_i^h
\overline\partial \overline\partial^{\ast}  \overline\partial \overline\partial^{\ast} \partial^{\ast} G_{BC} \partial D_jp_{\alpha}^5 (\epsilon \tilde{\sigma}_t)||_{k-2+\alpha}\\
&&+||\triangle_i^hD_j p_{\alpha}^5\overline\partial \overline\partial^{\ast}\tilde{\sigma}_{00}||_{k-2+\alpha}+||\mbox{lower-order terms of }~\tilde{\sigma}_t||_{k-2+\alpha}\\
\end{eqnarray*}

Applying Lemma 8.1, Lemma 8.2 on Page 455  in \cite{Ko1}, we get that there exists some positive constant $L_i, L_i' (1\le i \le 4)$ which is only depend on $k+1, \alpha$ such that
\begin{eqnarray*}
||\triangle_i^h D_j p_{\alpha}^5\overline\partial^{\ast}\partial (\epsilon \tilde{\sigma}_t)||_{k-2+\alpha} &\le&
L_1||p_{\alpha}^3 \epsilon ||_0||\triangle_i^h D_j p_{\alpha}^5 \tilde{\sigma}_t  ||_{k+\alpha} + L_1^{'}||p_{\alpha}^3\epsilon    ||_{k+1+\alpha}|| p_{\alpha}^5   \tilde{\sigma}_t||_{k+1+\alpha}
\end{eqnarray*}

\begin{eqnarray*}
||\triangle_i^h
\overline\partial \overline\partial^{\ast}  \overline\partial \overline\partial^{\ast} \partial^{\ast} G_{BC} \partial D_jp_{\alpha}^5 (\epsilon \tilde{\sigma}_t)||_{k-2+\alpha}&\le&
L_2||p_{\alpha}^3\epsilon  ||_0|| \triangle_i^h D_j p_{\alpha}^ 5\tilde{\sigma}_t  ||_{k+1+\alpha} + L_2^{'}||p_{\alpha}^3\epsilon     ||_{k+1+\alpha}|| p_{\alpha}^5   \tilde{\sigma}_t||_{k+1+\alpha}
\end{eqnarray*}

\begin{eqnarray*}
||\triangle_i^h D_j p_{\alpha}^5\overline\partial \overline\partial^{\ast}\tilde{\sigma}_{00}||_{k-2+\alpha}&\le&
L_3||\tilde{\sigma}_{00}||_{k+2+\alpha}
\end{eqnarray*}

\begin{eqnarray*}
||\mbox{lower-order terms of }~\tilde{\sigma}_t||_{k-2+\alpha}&\le&
 L_4^{'}||p_{\alpha}^3\epsilon     ||_{k+1+\alpha}|| p_{\alpha}^5   \tilde{\sigma}_t||_{k+1+\alpha}
\end{eqnarray*}

Then we have that there exists some positive constant $M_{k+1}$ which is only depend on $k+1, \alpha$ and the same $M_0$ as that above such that
\begin{eqnarray*}
||F^2||_{k+\alpha}&\le&M_0A(r)|| \triangle_i^h D_j p_{\alpha}^5 \tilde{\sigma}_t  ||_{k+\alpha}+M_{k+1}||p_{\alpha}^3\epsilon     ||_{k+1+\alpha}|| p_{\alpha}^5   \tilde{\sigma}_t||_{k+1+\alpha}+M_{k+1}||\tilde{\sigma}_{00}||_{k+2+\alpha}
\end{eqnarray*}

Thus,
\begin{eqnarray*}
||\triangle_i^h D_j p_{\alpha}^5 \tilde{\sigma}_t||_{k+\alpha}&\le&
C_k M_0 A(r) || \triangle_i^h D_j p_{\alpha}^5 \tilde{\sigma}_t  ||_{k+\alpha}+C_kM_{k+1}||p_{\alpha}^3  \epsilon   ||_{k+1+\alpha}|| p_{\alpha}^5   \tilde{\sigma}_t||_{k+1+\alpha}
\\
&&+C_k || p_{\alpha}^5   \tilde{\sigma}_t||_2+C_kM_{k+1}||\tilde{\sigma}_{00}||_{k+2+\alpha}
\end{eqnarray*}

We choose a sufficient small $r$ so that
\begin{eqnarray*}
M_0C_k A(r)<\frac{1}{2}.
\end{eqnarray*}

holds. Then we obtain that
\begin{eqnarray*}
||\triangle_i^h D_j p_{\alpha}^5 \tilde{\sigma}_t||_{k+\alpha}&\le&
2C_kM_{k+1}||p_{\alpha}^3 \epsilon    ||_{k+1+\alpha}|| p_{\alpha}^5   \tilde{\sigma}_t||_{k+1+\alpha}
+2C_k || p_{\alpha}^5   \tilde{\sigma}_t||_2\\
&&+2C_k M_{k+1}||\tilde{\sigma}_{00}||_{k+2+\alpha}
\end{eqnarray*}

Since $p_{\alpha}^3\tilde{\sigma}_t$ is $C^{k+1+\alpha}$ and $\tilde{\sigma}_{00}$ is $C^{\infty}$, the right side of the above formula is  bounded. So by Lemma 8.2(iii) on Page 455 in  \cite{Ko1}, we have that $p_{\alpha}^5 \tilde{\sigma}_t$
is $C^{k+2+\alpha}$.

Similarly, we can prove that $p_{\alpha}^{2l+1}\tilde{\sigma}_t$ is $C^{k+l+\alpha}$, for any $l\in \mathbb{N}$ where $r$ can be chosen such that it is only depend on $k, \alpha$ and is independent of $l$.
Since $p_{\alpha}^{2l+1}=1$ on $|t|\le \frac{r}{2},$ $\tilde{\sigma}_t$ is $C^{\infty}$ on $X$ with $|t|\le \frac{r}{2}.$ Then $\tilde{\sigma}_t$ can be considered as a real analytic family of forms in $t$ and thus it is smooth on $t$.

\end{proof}

Next, we discuss the locally extensions form in a special case.

\begin{theorem}

Let $(X,G)$ be a compact Hermitian generalized complex manifold.  If $H^{k+1}_{\overline\partial}(X)=0,$  we can also construct $\sigma_t$ as that in Theorem \ref{construction}, such that $\mathcal{E}(\sigma_t) \in U^k(X_{\epsilon(t)})$ and  $\overline\partial_t \circ \mathcal{E}(\sigma_t)=0$, where $\overline\partial_t$ is
the $\overline\partial$-operator on $X_{\epsilon(t)}$.
\end{theorem}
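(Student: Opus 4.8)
The plan is to run the same power-series construction as in Theorem \ref{construction}, but to exploit the vanishing $H^{k+1}_{\overline\partial}(X)=0$ in place of the two solvability hypotheses $\mathbb{B}^{k-1}$ and $\mathbb{S}^{k+1}$. The essential observation is that, by Lemma \ref{holcri} together with the invertibility of $e^{\epsilon}\cdot$ (Proposition \ref{isoeE}), the operator $D:=d+[\partial,\epsilon\cdot]=e^{-\epsilon}\cdot d\circ e^{\epsilon}\cdot$ satisfies $D^2=e^{-\epsilon}\cdot d^2\circ e^{\epsilon}\cdot=0$. Decomposing $D=\partial+(\overline\partial+[\partial,\epsilon\cdot])$ into its components of Clifford degree $-1$ and $+1$ and collecting the degree $+2$ part of $D^2=0$, the degree-raising piece $D_+:=\overline\partial+[\partial,\epsilon\cdot]:U^k(X)\to U^{k+1}(X)$ already squares to zero, $D_+^2=0$. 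By Theorem \ref{k-cri}, finding $\sigma_t$ with $\overline\partial_t\mathcal{E}(\sigma_t)=0$ is equivalent to solving $D_+\tilde\sigma_t=0$ for $\tilde\sigma_t:=(1-\epsilon\epsilon^{\ast})(\sigma_t)\in U^k(X)$, so I would work directly with the single equation $D_+\tilde\sigma_t=0$ rather than with the split system $\partial\tilde\sigma_{pq}=0,\ \overline\partial\tilde\sigma_{pq}=-\partial(\epsilon\cdot\tilde\sigma_t)$ used in Theorem \ref{construction}. This is precisely what renders $\mathbb{B}^{k-1}$, which served there only to enforce $\partial\tilde\sigma_{pq}=0$, unnecessary here.

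Writing $\epsilon(t)=\sum_{i+j\ge1}t^i\bar t^j\epsilon_{ij}$ and $\tilde\sigma_t=\tilde\sigma_{00}+\sum_{p+q\ge1}t^p\bar t^q\tilde\sigma_{pq}$ with $\tilde\sigma_{00}=\sigma_{00}$, I would compare coefficients of $t^p\bar t^q$ in $D_+\tilde\sigma_t=0$. Since $\epsilon(0)=0$, the order-zero equation is just $\overline\partial\tilde\sigma_{00}=0$, which holds once $\sigma_{00}$ is taken to be a $\overline\partial$-harmonic representative of its class; note that, in contrast to Theorem \ref{construction}, no $d$-closed representative (hence no appeal to Proposition \ref{d-closed representation}) is needed. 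For $N=p+q\ge1$ the equation reads $\overline\partial\tilde\sigma_{pq}=\Phi_{pq}$, where $\Phi_{pq}:=-\sum_{i+j=p,\,k+l=q,\,i+k\ge1}[\partial,\epsilon_{ik}\cdot]\tilde\sigma_{jl}\in U^{k+1}(X)$ depends only on the previously constructed terms $\tilde\sigma_{jl}$ with $j+l<N$.

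The key step is to verify that $\Phi_{pq}$ is $\overline\partial$-closed, so that $H^{k+1}_{\overline\partial}(X)=0$ forces it to be $\overline\partial$-exact. Assuming inductively that the partial sum $\tilde\sigma^{(N-1)}_t$ through order $N-1$ solves $D_+\tilde\sigma_t=0$ through order $N-1$, its order-$N$ coefficient is exactly $-\Phi_{pq}$; applying $D_+$ once more and invoking $D_+^2=0$, together with the fact that $[\partial,\epsilon\cdot]$ raises the $t$-order by at least one so that it annihilates the lower-order (vanishing) coefficients of $D_+\tilde\sigma^{(N-1)}_t$, forces the order-$N$ coefficient $-\overline\partial\Phi_{pq}$ of $D_+(D_+\tilde\sigma^{(N-1)}_t)$ to vanish, i.e. $\overline\partial\Phi_{pq}=0$. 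This is the standard Kodaira--Spencer obstruction-vanishing argument, with the obstruction now living in $H^{k+1}_{\overline\partial}(X)=0$. Consequently $\mathbb{H}\Phi_{pq}=0$, and from $1=\mathbb{H}+\triangle_{\overline\partial}G_{\overline\partial}$ with $\overline\partial\Phi_{pq}=0$ one obtains the canonical solution $\tilde\sigma_{pq}:=\overline\partial^{\ast}G_{\overline\partial}\Phi_{pq}\in U^k(X)$ satisfying $\overline\partial\tilde\sigma_{pq}=\Phi_{pq}$. Setting $\sigma_t:=(1-\epsilon\epsilon^{\ast})^{-1}\tilde\sigma_t$, which is well defined since $\|\epsilon\|_{L^\infty}<1$, then yields $\sigma_t\in U^k(X)$ of the desired form with $\mathcal{E}(\sigma_t)\in U^k(X_{\epsilon(t)})$ and $\overline\partial_t\mathcal{E}(\sigma_t)=0$.

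Finally, convergence and smoothness of $\tilde\sigma_t$ in $t$ would be established exactly as in Step 2 of Theorem \ref{construction}: the majorant series $A(|t|)$ controls $\|\tilde\sigma_t\|_{k+\alpha}$, and the difference-quotient elliptic bootstrap for the strongly elliptic operator $\triangle_{\overline\partial}$ upgrades the H\"older regularity inductively, the argument being if anything simpler here since only $G_{\overline\partial}$ and $\overline\partial^{\ast}$, and not $G_{BC}$, enter the recursion. The main obstacle is the bookkeeping in the obstruction-vanishing step: conceptually $\overline\partial\Phi_{pq}=0$ is immediate from $D_+^2=0$, but one must carefully track that all cross terms of the correct $t$-order are accounted for, after which the hypothesis $H^{k+1}_{\overline\partial}(X)=0$ supplies the solution at every stage.
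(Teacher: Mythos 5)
Your proposal is correct, and it arrives at the same order-by-order recursion as the paper --- your $\Phi_{pq}$ is exactly the paper's $\tilde{\tau}_{pq}=-\sum_{i+j=p,\,k+l=q,\,i+k\ge 1}[\partial,\epsilon_{ik}\cdot\,]\tilde{\sigma}_{jl}$, solved at each order via $H^{k+1}_{\overline\partial}(X)=0$ --- but the crucial step, proving that this right-hand side is $\overline\partial$-closed, is handled by a genuinely different argument. The paper verifies $\overline\partial\tilde{\tau}_{pq}=0$ by a lengthy explicit computation combining the Maurer--Cartan equation $d_L\epsilon=\frac{1}{2}[\epsilon,\epsilon]$, Lemma \ref{braket}, and the induction hypothesis. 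You instead note that $D:=d+[\partial,\epsilon\cdot\,]=e^{-\epsilon}\cdot d\circ e^{\epsilon}\cdot$ squares to zero by Lemma \ref{holcri} and Proposition \ref{isoeE}, and that since $[\partial,\epsilon\cdot\,]$ raises the Clifford degree by one, the degree-$(+1)$ component $D_+=\overline\partial+[\partial,\epsilon\cdot\,]$ of $D$ satisfies $D_+^2=0$ by homogeneity in the decomposition $\wedge^{\bullet}T_X^{\ast}=\oplus_k U^k(X)$; the identity $\overline\partial\Phi_{pq}=0$ then follows from the standard formal obstruction-vanishing computation, with all bracket manipulations absorbed into Lemma \ref{holcri}. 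This is cleaner, and it buys a small logical improvement: you need only $\overline\partial\tilde{\sigma}_{00}=0$ (a harmonic representative), whereas the paper invokes Proposition \ref{d-closed representation} to arrange $d\tilde{\sigma}_{00}=0$, a step which formally requires $X\in\mathbb{B}^{k-1}$ --- an assumption not present in this theorem's hypotheses; your route shows that appeal is dispensable, since the terms involving $\partial\tilde{\sigma}_{00}$ are killed by $d_L\epsilon_{10}=0$ and the anticommutation $\overline\partial\partial=-\partial\overline\partial$. The remaining ingredients --- the canonical solution $\tilde{\sigma}_{pq}=\overline\partial^{\ast}G_{\overline\partial}\Phi_{pq}$, the recovery $\sigma_t=(1-\epsilon\epsilon^{\ast})^{-1}\tilde{\sigma}_t$ for $\|\epsilon\|_{L^{\infty}}<1$, and the elliptic regularity bootstrap --- coincide with the paper's.
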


\begin{proof}
We also construct $\sigma_t$.\\

Set $\tilde{\sigma_t}:=(1-\epsilon\epsilon^{\ast})(\sigma_t).$ By Theorem \ref{k-cri},

\begin{eqnarray*}
\overline\partial_t(\mathcal{E} (\sigma_t))=0
&\Leftrightarrow& ([\partial, \epsilon\cdot]+\overline\partial)
\circ (1 - \epsilon\epsilon^{\ast})( \sigma_t)=0\\
&\Leftrightarrow& ([\partial, \epsilon\cdot]+\overline\partial)
\circ \tilde{\sigma_t}=0.
\end{eqnarray*}

Substitute
\begin{eqnarray*}
\epsilon(t):=\sum_{i+j\ge 1} t^i \bar{t}^j\epsilon_{ij},\\
\sigma_t:=\sigma_{00}+\sum_{p+q\ge 1} t^p \bar{t}^q\sigma_{pq},\\
\tilde{\sigma_t}:=\tilde{\sigma}_{00}+\sum_{p+q\ge 1} t^p \bar{t}^q\tilde{\sigma}_{pq}
\end{eqnarray*}
into the above formula and compare the coefficients of $t^p \bar{t}^q$, we get that
\begin{eqnarray}
\overline\partial \tilde{\sigma}_{pq} &=&-\sum_{i+j=p, k+l=q, i+k\ge 1}\partial (\epsilon_{ik} \cdot \tilde{\sigma}_{jl})
+\sum_{i+j=p, k+l=q, i+k\ge 1} \epsilon _{ik}\cdot \partial \tilde{\sigma}_{jl}
.~(p+q\ge 1)\label{hol03}
\end{eqnarray}

By Proposition \ref{d-closed representation}, we have that
\begin{eqnarray*}
d\tilde{\sigma}_{00}=0.
\end{eqnarray*}

Set
\begin{eqnarray*}
\tilde{\eta} _{pq}&:=&-\sum_{i+j=p, k+l=q, i+k\ge 1}\partial (\epsilon_{ik} \cdot \tilde{\sigma}_{jl}),\\
\tilde{\tau}_{pq}&:=&\tilde{\eta}_{pq}
+\sum_{i+j=p, k+l=q, i+k\ge 1} \epsilon _{ik}\cdot \partial \tilde{\sigma}_{jl}.
\end{eqnarray*}

Now we construct $\tilde{\sigma}_{pq}~ (p+q\ge 1)$ by induction on $p+q$.

Since
\begin{eqnarray*}
\overline\partial  \tilde{\tau}_{10}&=&\overline\partial (-\partial (\epsilon_{10} \cdot \tilde{\sigma}_{00})
+\epsilon _{10}\cdot \partial \tilde{\sigma}_{00})\\
&=&\partial\overline\partial  (\epsilon_{10} \cdot \tilde{\sigma}_{00})
+\overline\partial (\epsilon _{10}\cdot \partial \tilde{\sigma}_{00})\\
&=&\partial(d_L  \epsilon_{10} \cdot \tilde{\sigma}_{00}+\epsilon_{10} \cdot \overline\partial\tilde{\sigma}_{00})
+d_L \epsilon _{10}\cdot \partial \tilde{\sigma}_{00}+\epsilon _{10}\cdot\overline\partial \partial \tilde{\sigma}_{00}\\
&=&0  ~~(\mbox{since}~~d\tilde{\sigma}_{00}=d_L \epsilon _{10}=0).
\end{eqnarray*}

Then by the assumption $H^{k+1}_{\overline\partial}(X)=0,$ we have that $\tilde{\tau}_{10} \in Im \overline\partial,$ that is,
\begin{eqnarray*}
  \tilde{\tau}_{10}=\overline\partial \tilde{\sigma}_{10}
\end{eqnarray*}
has a solution $\tilde{\sigma}_{10}$.

Similarly, we can find $\tilde{\sigma}_{01}$ in the same way.

If we have already got $\tilde{\sigma}_{pq}$ satisfies Formula \ref{hol03} , where $p+q=1,2,\cdots , N-1$. For $p+q=N,$
we have that

\begin{eqnarray*}
\overline\partial \eta_{pq}
&:=&\sum_{i+j=p, k+l=q, i+k\ge 1}\partial \overline\partial(\epsilon_{ik} \cdot \tilde{\sigma}_{jl})\\
&=&\sum_{i+j=p, k+l=q, i+k\ge 1}\partial (d_L\epsilon_{ik} \cdot \tilde{\sigma}_{jl}+\epsilon_{ik} \cdot\overline\partial \tilde{\sigma}_{jl})\\
&=&\sum_{i+j=p, k+l=q, i+k\ge 1}\partial (\sum_{r+m=i, s+n=k}\frac{1}{2}[\epsilon_{rs}, \epsilon_{mn}] \cdot \tilde{\sigma}_{jl}+\epsilon_{ik} \cdot\overline\partial \tilde{\sigma}_{jl})\\
&=&\sum_{i+j=p, k+l=q, i+k\ge 1}\partial \sum_{r+m=i, s+n=k}\frac{1}{2}[\epsilon_{rs}, \epsilon_{mn}] \cdot \tilde{\sigma}_{jl}\\
&&+\sum_{i+j=p, k+l=q, i+k\ge 1}
\sum _{r+m=j, s+n=l}\partial (\epsilon_{ik} \cdot(-\partial (\epsilon_{rs}\cdot \tilde{\sigma}_{mn})+\epsilon_{rs}\cdot \partial \tilde{\sigma}_{mn}))\\
&=&\sum_{i+j=p, k+l=q, i+k\ge 1}\partial (\frac{1}{2}(-\partial (\epsilon_{rs}\cdot \epsilon_{mn} \cdot\tilde{\sigma}_{jl})-  \epsilon_{rs}\cdot \epsilon_{mn} \cdot\partial\tilde{\sigma}_{jl}
+\epsilon_{rs}\cdot \partial (\epsilon_{mn} \cdot\tilde{\sigma}_{jl})\\
&&+\epsilon_{mn}\cdot \partial (\epsilon_{rs} \cdot\tilde{\sigma}_{jl}   ))+\sum_{i+j=p, k+l=q, i+k\ge 1}\sum _{r+m=j, s+n=l}\partial( \epsilon_{ik} \cdot(-\partial( \epsilon_{rs}\cdot \tilde{\sigma}_{mn})+\epsilon_{rs}\cdot \partial \tilde{\sigma}_{mn}))\\
&=&\sum_{i+j=p, k+l=q, i+k\ge 1}\partial (-\frac{1}{2}  \epsilon_{rs}\cdot \epsilon_{mn} \cdot\partial\tilde{\sigma}_{jl} +\epsilon_{rs}\cdot \partial (\epsilon_{mn} \cdot\tilde{\sigma}_{jl})\\
&&+\sum_{i+j=p, k+l=q, i+k\ge 1}\sum _{r+m=j, s+n=l}\partial( \epsilon_{ik} \cdot(-\partial (\epsilon_{rs}\cdot \tilde{\sigma}_{mn})+\epsilon_{rs}\cdot \partial \tilde{\sigma}_{mn}))\\
&=&\frac{1}{2}\sum_{i+r+m=p, k+s+n=q, i+k\ge 1, r+e\ge 1}\partial(  \epsilon_{ik}\cdot \epsilon_{rs} \cdot\partial\tilde{\sigma}_{mn}).
\end{eqnarray*}

The third equality holds  since the integrable condition; the fourth equality holds  since the induction; the fifth equality holds  since Lemma \ref{braket};
and the sixth equality holds  since $\partial ^2=0$.

Then
\begin{eqnarray*}
\overline\partial  \tilde{\tau}_{pq}&:=&\overline\partial\tilde{\eta}_{pq}
+\overline\partial \sum_{i+j=p, k+l=q, i+k\ge 1} \epsilon _{ik}\cdot \partial \tilde{\sigma}_{jl}\\
&=&\frac{1}{2}\sum_{i+r+m=p, k+s+n=q, i+k\ge 1, r+e\ge 1}\partial(  \epsilon_{ik}\cdot \epsilon_{rs} \cdot\partial\tilde{\sigma}_{mn})
+\sum_{i+j=p, k+l=q, i+k\ge 1}(d_L  \epsilon _{ik})\cdot \partial \tilde{\sigma}_{jl}\\
&&-\sum_{i+j=p, k+l=q, i+k\ge 1}  \epsilon _{ik}\cdot \partial \overline\partial\tilde{\sigma}_{jl}\\
&=&\frac{1}{2}\sum_{i+r+m=p, k+s+n=q, i+k\ge 1, r+e\ge 1}\partial(  \epsilon_{ik}\cdot \epsilon_{rs} \cdot\partial\tilde{\sigma}_{mn})
\\
&&+\sum_{i+j=p, k+l=q, i+k\ge 1}(\frac{1}{2}\sum_{r+m=i, s+n=k}[\epsilon_{rs}, \epsilon_{mn}])\cdot \partial \tilde{\sigma}_{jl}\\
&&-\sum_{i+j=p, k+l=q, i+k\ge 1}  \epsilon _{ik}\cdot \partial \overline\partial\tilde{\sigma}_{jl}\\
&=&\frac{1}{2}\sum_{i+r+m=p, k+s+n=q, i+k\ge 1, r+e\ge 1}\partial(  \epsilon_{ik}\cdot \epsilon_{rs} \cdot\partial\tilde{\sigma}_{mn})
\\
&&+\sum_{i+j=p, k+l=q, i+k\ge 1}(\frac{1}{2}\sum_{r+m=i, s+n=k}(
-\partial (\epsilon_{mn}\cdot \epsilon_{rs} \cdot \partial \tilde{\sigma}_{jl})-\epsilon_{mn}\cdot \epsilon_{rs} \cdot \partial^2 \tilde{\sigma}_{jl}
\\
&&+\epsilon_{rs}\cdot \partial(\epsilon_{mn} \cdot  \partial\tilde{\sigma}_{jl})+\epsilon_{mn}\cdot \partial(\epsilon_{rs} \cdot  \partial\tilde{\sigma}_{jl})
)\\
&&-\sum_{i+j=p, k+l=q, i+k\ge 1}  \epsilon _{ik}\cdot \partial (\sum_{r+m=j, s+n=l, r+s\ge1, m+n\ge1}-\partial (\epsilon_{rs}\cdot \tilde{\sigma}_{mn})+\epsilon_{rs}\cdot \partial\tilde{\sigma}_{mn} )\\
&=&\frac{1}{2}\sum_{i+r+m=p, k+s+n=q, i+k\ge 1, r+e\ge 1}\partial(  \epsilon_{ik}\cdot \epsilon_{rs} \cdot\partial\tilde{\sigma}_{mn})
\\
&&+\sum_{i+j=p, k+l=q, i+k\ge 1}(\frac{1}{2}\sum_{r+m=i, s+n=k}(
-\partial (\epsilon_{mn}\cdot \epsilon_{rs} \cdot \partial \tilde{\sigma}_{jl})
\\
&&+\epsilon_{rs}\cdot \partial(\epsilon_{mn} \cdot \partial \tilde{\sigma}_{jl})+\epsilon_{mn}\cdot \partial(\epsilon_{rs} \cdot \partial \tilde{\sigma}_{jl})
)\\
&&-\sum_{i+j=p, k+l=q, i+k\ge 1}  \epsilon _{ik}\cdot \partial (\sum_{r+m=j, s+n=l, r+s\ge1, m+n\ge1}\epsilon_{rs}\cdot \partial\tilde{\sigma}_{mn} )\\
&=&0.
\end{eqnarray*}

The third equality holds  since the integrable condition; the fourth equality holds  since Lemma \ref{braket} and the induction.

Then by the assumption $H^{k+1}_{\overline\partial}(X)=0,$ we have that $\tilde{\tau}_{pq} \in Im \overline\partial,$ that is,
\begin{eqnarray*}
 \tilde{\tau}_{pq}=\overline\partial \tilde{\sigma}_{pq}
\end{eqnarray*}
has a solution $\tilde{\sigma}_{pq}$.

We get the regularity of $\tilde{\sigma}_t$ by using the same elliptic estimates as that in the above theorem.

\end{proof}

\section{Applications}
In this section, we use the extension formula in Theorem \ref{construction} to get the invariance of the generalized Hodge number of the deformations of compact
generalized Hermitian manifolds with $\partial\overline\partial$-lemma holds. The method we used here is parallel to that in \cite{Ko1, RaoWanZhao03}.
Our idea is as follows: Since $G_{\overline\partial}$ is strongly elliptic, by Theorem 7.3 on Page 326 in  \cite {Ko1}, we have that $h^{k}_{\overline\partial_t}(X_t):=dim_{\mathbb{C}}H^{k-1}_{\overline\partial _t}(X_t)$ is upper-semicontinuous in $t$, that is,

\begin{eqnarray*}
h^{k}_{\overline\partial_t}(X_t) &\le& h^{k}_{\overline\partial}(X) ~\mbox{if}~ |t| ~\mbox{sufficient small}.
\end{eqnarray*}

In Theorem \ref{construction}, we have given a map  from $H_{\overline\partial }^k (X) $ to  $H_{\overline\partial_t }^k (X_t)$
which is Map \ref{cohominj} below. If it is injective, we have  that $h^{k}_{\overline\partial_t}(X_t) \ge h^{k}_{\overline\partial}(X)$ and thus get the result.

\begin{proposition}\label{cohoinj}
Let $(X,G)$ be a compact generalized Hermitian manifold. If we assume that $h^{k-1}_{\overline\partial _t}(X_t)$ is independent of $t$, and $X \in \mathbb{B}^{k-1}\cap \mathbb{S}^{k+1},$ we have that

\begin{eqnarray}\label{cohominj}
H_{\overline\partial }^k (X)  & \to &  H_{\overline\partial_t }^k (X_t) \\
\sigma_{00} & \mapsto &  \mathcal{E}(\sigma_{t})\nonumber
\end{eqnarray}

is injective, where $\sigma_t$ is constructed in Theorem \ref{construction}.
\end{proposition}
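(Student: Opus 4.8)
The plan is to show the linear map (\ref{cohominj}) is injective for all sufficiently small $t$ by rewriting the vanishing of the image class through the Hodge decomposition on $X_t$ and letting $t\to 0$. First I would record that $\sigma_{00}\mapsto \mathcal{E}(\sigma_t)$ is linear in the initial datum: the solution $\sigma_t$ of Theorem \ref{construction} is built from $\sigma_{00}$ by the linear operators $\overline\partial^{\ast}G_{\overline\partial}$, $(\partial\overline\partial)^{\ast}G_{BC}$ and Clifford multiplication by the fixed $\epsilon_{ij}$, and it satisfies $\overline\partial_t\mathcal{E}(\sigma_t)=0$, so $[\mathcal{E}(\sigma_t)]\in H^k_{\overline\partial_t}(X_t)$ is well defined (here $\sigma_t$ exists precisely because $X\in\mathbb{B}^{k-1}\cap\mathbb{S}^{k+1}$). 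By linearity, injectivity reduces to the claim: if $[\sigma_{00}]\neq 0$ in $H^k_{\overline\partial}(X)$, then $[\mathcal{E}(\sigma_t)]\neq 0$ for $|t|$ small.

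The key step is to detect the image class by a harmonic projection that varies continuously in $t$. Since $\overline\partial_t\mathcal{E}(\sigma_t)=0$, the Hodge decomposition for $\Delta_{\overline\partial_t}$ gives
\[
\mathcal{E}(\sigma_t)=\mathbb{H}_{\overline\partial_t}\mathcal{E}(\sigma_t)+\overline\partial_t\,\eta_t,\qquad
\eta_t:=\overline\partial_t^{\ast}G_{\overline\partial_t}\mathcal{E}(\sigma_t)=G_{\overline\partial_t}\overline\partial_t^{\ast}\mathcal{E}(\sigma_t),
\]
where the second expression uses the commutation $\overline\partial_t^{\ast}G_{\overline\partial_t}=G_{\overline\partial_t}\overline\partial_t^{\ast}$ and therefore involves only the Green operator of $\Delta_{\overline\partial_t}$ acting on $U^{k-1}(X_t)$. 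Consequently $[\mathcal{E}(\sigma_t)]=0$ if and only if
$\mathbb{H}_{\overline\partial_t}\mathcal{E}(\sigma_t)=\mathcal{E}(\sigma_t)-\overline\partial_t\eta_t=0$.

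Now I would invoke the hypothesis that $h^{k-1}_{\overline\partial_t}(X_t)$ is independent of $t$: by the standard Kodaira–Spencer regularity theorem for the Green operator under constancy of the relevant Hodge number (cf.\ \cite{Ko1}), $G_{\overline\partial_t}$ on $U^{k-1}(X_t)$ depends smoothly (indeed real-analytically) on $t$. Since $\mathcal{E}(\sigma_t)$ is a smooth family in $t$ by the regularity part of Theorem \ref{construction}, and $\overline\partial_t,\overline\partial_t^{\ast}$ vary smoothly with the metric and $\epsilon(t)$, the form $\eta_t$ extends smoothly to $t=0$ with $\eta_0=\overline\partial^{\ast}G_{\overline\partial}\sigma_{00}$. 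At $t=0$ one has $\mathcal{E}=\mathrm{id}$ and $\sigma_t|_{t=0}=\sigma_{00}$, so, using that $\sigma_{00}$ is $\overline\partial$-closed,
\[
\lim_{t\to 0}\mathbb{H}_{\overline\partial_t}\mathcal{E}(\sigma_t)
=\sigma_{00}-\overline\partial\,\overline\partial^{\ast}G_{\overline\partial}\sigma_{00}
=\mathbb{H}_{\overline\partial}\sigma_{00},
\]
the harmonic representative of $[\sigma_{00}]$. If $[\sigma_{00}]\neq 0$ this limit is nonzero, hence $\mathbb{H}_{\overline\partial_t}\mathcal{E}(\sigma_t)\neq 0$, i.e.\ $[\mathcal{E}(\sigma_t)]\neq 0$, for $|t|$ small. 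Because $H^k_{\overline\partial}(X)$ is finite dimensional and the linear map $[\sigma_{00}]\mapsto\mathbb{H}_{\overline\partial_t}\mathcal{E}(\sigma_t)$ converges as $t\to0$ to the injective map $[\sigma_{00}]\mapsto\mathbb{H}_{\overline\partial}\sigma_{00}$ (uniformly on the compact unit sphere of the domain), this map, and hence (\ref{cohominj}), is injective for all sufficiently small $t$.

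The main obstacle is exactly the smooth dependence of the Green operator on $t$, which is why the constancy of $h^{k-1}_{\overline\partial_t}(X_t)$ is assumed; expressing $\eta_t$ through $G_{\overline\partial_t}$ on $U^{k-1}(X_t)$ rather than on $U^k(X_t)$ is what makes constancy of $h^{k-1}$, instead of the a priori unknown constancy of $h^{k}$, sufficient. The remaining ingredients—that $\mathcal{E}(\sigma_t)\to\sigma_{00}$ and $\overline\partial_t^{\ast}\to\overline\partial^{\ast}$ as $t\to 0$, and the ellipticity of $\Delta_{\overline\partial_t}$ ensuring continuity of the harmonic projection through the formula above—are routine given Theorem \ref{construction}.
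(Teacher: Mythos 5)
Your proposal is correct and follows essentially the same route as the paper: both hinge on the identity $\mathcal{E}(\sigma_t)-\overline\partial_t G_{\overline\partial_t}\overline\partial_t^{\ast}\mathcal{E}(\sigma_t)=\mathbb{H}_{\overline\partial_t}\mathcal{E}(\sigma_t)$, on the fact that $\overline\partial_t^{\ast}\mathcal{E}(\sigma_t)$ lives in $U^{k-1}(X_t)$ so that constancy of $h^{k-1}_{\overline\partial_t}$ gives smooth dependence of the relevant Green operator on $t$ (Kodaira), and on letting $t\to 0$ to recover the harmonic projection of $\sigma_{00}$. Your closing compactness/uniformity remark on the unit sphere of $H^k_{\overline\partial}(X)$ is a slightly more careful packaging of the paper's ``let $t\to 0$'' step, but it is not a different argument.
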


\begin{proof}

Since $X \in \mathbb{B}^{k-1}\cap \mathbb{S}^{k+1},$ by Theorem \ref{construction}, we have that $\sigma_{t}$ exists.
If there exists some $\eta_t\in U^{k-1}(X_t) $ such that $\mathcal{E}(\sigma_t)=\overline\partial _t \eta_t$, then
\begin{eqnarray*}
\mathcal{E}(\sigma_t)&=&\overline\partial _t \eta_t\\
&=&\overline\partial _t (\mathbb{H}_t \eta_t+ \Delta_{\overline\partial _t } G_{\overline\partial _t }\eta_t)\\
&=&\overline\partial _t  \overline\partial _t^{\ast} \overline\partial _t G_{\overline\partial _t }\eta_t\\
&=&\overline\partial _t G_{\overline\partial _t } \overline\partial _t^{\ast} \overline\partial _t \eta_t\\
&=&\overline\partial _t G_{\overline\partial _t } \overline\partial _t^{\ast} (\mathcal{E}(\sigma_t)).
\end{eqnarray*}

Also, by the assumption that $h^{k-1}_{\overline\partial _t}(X_t)$ is independent of $t$, we have $G_{\overline\partial _t }$
is smooth on $t$~(Theorem 7.6 on Page 344 in \cite{Ko1}). So we can let $t \to 0$ on both sides of the formula above and  get that
\begin{eqnarray*}
\sigma_{00}=\overline\partial G_{\overline\partial  } \overline\partial ^{\ast} (\sigma_{00}),
\end{eqnarray*}
that is, $\sigma_{00}$ is also $\overline\partial$-exact.
\end{proof}

\begin{proposition}\label{case-n}
Let $(X,G)$ be a compact generalized Hermitian manifold. If we assume that $X\in \mathbb{S}^{-n+1}$, then $h^{-n}_{\overline\partial_t}(X_t)$ is independent of $t$, where $n=dim _{\mathbb{C}} X$.
\end{proposition}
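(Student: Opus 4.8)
The plan is to realize this proposition as the base case $k=-n$ of the general invariance scheme sketched at the start of this section, where the two inequalities $h^{-n}_{\overline\partial_t}(X_t)\le h^{-n}_{\overline\partial}(X)$ and $h^{-n}_{\overline\partial_t}(X_t)\ge h^{-n}_{\overline\partial}(X)$ are established separately. The first is exactly the upper-semicontinuity of $h^{-n}_{\overline\partial_t}(X_t)$ in $t$, which holds for $|t|$ small by Theorem 7.3 on Page 326 in \cite{Ko1} since $\triangle_{\overline\partial_t}$ is strongly elliptic. For the reverse inequality I would exhibit an injective linear map $H^{-n}_{\overline\partial}(X)\to H^{-n}_{\overline\partial_t}(X_t)$, produced by the extension construction of Theorem \ref{construction} together with Proposition \ref{cohoinj}.

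The key point is that at the extreme index $k=-n$ all the hypotheses degenerate to the single assumption $X\in\mathbb{S}^{-n+1}$. Since $U^{-n-1}(X)=0$, the condition $\mathbb{B}^{-n-1}$ is vacuously satisfied, so $\mathbb{B}^{k-1}\cap\mathbb{S}^{k+1}=\mathbb{B}^{-n-1}\cap\mathbb{S}^{-n+1}=\mathbb{S}^{-n+1}$, which is precisely the hypothesis here; thus Theorem \ref{construction} applies and produces, for each $\sigma_{00}\in H^{-n}_{\overline\partial}(X)$, a form $\sigma_t=\sigma_{00}+\sum_{i,j\ge 1}t^i\bar t^j\sigma_{ij}\in U^{-n}(X)$ with $\overline\partial_t\circ\mathcal{E}(\sigma_t)=0$. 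I would also record the two simplifications that make the construction transparent: first, $\partial$ annihilates $U^{-n}(X)$ (because $U^{-n-1}(X)=0$), so Formula \ref{hol01} is automatic and no $\partial$-closing correction $\tilde\sigma_{pq}^2$ is required; second, $\epsilon^{\ast}\cdot U^{-n}(X)=0$, whence $\tilde\sigma_t=(1-\epsilon\epsilon^{\ast})(\sigma_t)=\sigma_t$, so by Theorem \ref{k-cri} the equation $\overline\partial_t\mathcal{E}(\sigma_t)=0$ reduces to $\overline\partial\sigma_t=-\partial(\epsilon\cdot\sigma_t)$, solved degree by degree using only $\mathbb{S}^{-n+1}$.

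With $\sigma_t$ in hand I would invoke Proposition \ref{cohoinj} to conclude that $\sigma_{00}\mapsto\mathcal{E}(\sigma_t)$ descends to an injection on cohomology. Its two hypotheses hold automatically: $X\in\mathbb{B}^{-n-1}\cap\mathbb{S}^{-n+1}=\mathbb{S}^{-n+1}$ as above, and $h^{-n-1}_{\overline\partial_t}(X_t)$ is independent of $t$ because $U^{-n-1}(X_t)=0$ forces $H^{-n-1}_{\overline\partial_t}(X_t)=0$ for every $t$. In fact the injectivity at $k=-n$ is even more elementary than the general argument: since $H^{-n}_{\overline\partial_t}(X_t)=\ker\bigl(\overline\partial_t\colon U^{-n}(X_t)\to U^{-n+1}(X_t)\bigr)$ carries no coboundaries (the incoming $\overline\partial_t$ would emanate from $U^{-n-1}(X_t)=0$), any $\overline\partial_t$-exact element of $U^{-n}(X_t)$ vanishes; hence $\mathcal{E}(\sigma_t)=0$ forces $\sigma_t=0$ by the isomorphism $\mathcal{E}$ of Proposition \ref{isoeE}, and therefore $\sigma_{00}=0$. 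This yields $h^{-n}_{\overline\partial_t}(X_t)\ge h^{-n}_{\overline\partial}(X)$, and combined with upper-semicontinuity gives equality for all small $t$, i.e. $h^{-n}_{\overline\partial_t}(X_t)$ is independent of $t$.

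The part that needs the most care is not a hard estimate but the bookkeeping at the boundary index: one must check that the degenerate instances $U^{-n-1}=0$, $\partial|_{U^{-n}}=0$ and $\epsilon^{\ast}|_{U^{-n}}=0$ are handled consistently, so that Theorem \ref{construction} and Proposition \ref{cohoinj} genuinely apply, and that the regularity and convergence of the series $\sigma_t$ established in Step 2 of Theorem \ref{construction} are inherited verbatim. No new analytic difficulty arises beyond what is already contained in Theorem \ref{construction}.
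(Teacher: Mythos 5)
Your proposal is correct and follows essentially the same route as the paper: both exploit that $U^{-n-1}=0$ makes $\partial$ vanish on $U^{-n}(X)$ and $\tilde\sigma_t=\sigma_t$, so the extension is built degree by degree using only $X\in\mathbb{S}^{-n+1}$, and both obtain injectivity of $\sigma_{00}\mapsto\mathcal{E}(\sigma_t)$ from the absence of coboundaries in $U^{-n}(X_t)$ together with the isomorphism $\mathcal{E}$, then conclude by upper-semicontinuity. The only cosmetic difference is that you additionally offer a detour through Proposition \ref{cohoinj}, which the paper skips in favour of the direct argument you also give.
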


\begin{proof}
For any  $\sigma_{00} \in H_{\overline\partial }^{-n} (X)$, we have that $d \sigma_{00}=\partial\sigma_{00}=0.$
By the assumption that $X\in \mathbb{S}^{-n+1}
$  and  $\partial (\sigma)=0$ for any $\sigma \in U^{-n}(X),$ we have that Equation \ref{hol03} has the solution $\tilde{\sigma}_{pq}$ and thus $\tilde{\sigma}_t$ exists.

If $\mathcal{E}(\tilde{\sigma}_t)=0 \in H_{\overline\partial_t }^{-n} (X_t)$, we have that there exists some $\eta_t \in U^{-n-1}(X_t)$, such that $\mathcal{E}(\tilde{\sigma}_t)=\overline\partial_t \eta_t$. Since $U^{-n-1}(X_t)=0$, we have that $\mathcal{E}(\tilde{\sigma}_t)=0$. By Proposition \ref{isoeE}, $\mathcal{E}$ is an isomorphism and thus $\sigma_t=0.$ Then we get that $\sigma_{00}=0$ and Map \ref{cohominj} is injective.

And since we have already known that $h^{k}_{\overline\partial_t}(X_t)$ is upper-semicontinuous in $t$ (Theorem 7.3 on Page 326 in \cite {Ko1} and $\Delta_{\overline\partial}$ is strongly elliptic), we prove the proposition.
\end{proof}

Thus, we can get the following:

\begin{corollary}\label{i}
Let $(X,G)$ be a compact generalized Hermitian manifold which satisfies $\partial\overline\partial$-lemma, then $h_{\overline\partial_t}^k(X_t)$ is independent of $t$, where $-n\le k \le n$.
\end{corollary}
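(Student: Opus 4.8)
The plan is to combine the semicontinuity of the generalized Hodge numbers with the injectivity of the extension map from Proposition \ref{cohoinj}, organized as an induction on $k$ running from $-n$ up to $n$. First I would record what the $\partial\overline\partial$-lemma buys us. By Lemma \ref{dd} the lemma forces $X\in\mathbb{B}^k$ for every $-n\le k\le n$; and since any $\partial$-exact solution of $\overline\partial\sigma=\partial\varphi$ is in particular a solution, we have $\mathbb{B}^k\subset\mathbb{S}^k$, so that $X\in\mathbb{S}^k$ as well for all such $k$. In particular the hypotheses $X\in\mathbb{B}^{k-1}\cap\mathbb{S}^{k+1}$ demanded by Theorem \ref{construction} and Proposition \ref{cohoinj} hold for every $k$ in the range, the top endpoint condition $\mathbb{S}^{n+1}$ being vacuous since $U^{n+1}(X)=0$. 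I would also recall the inequality established at the opening of this section: strong ellipticity of $\Delta_{\overline\partial}$ together with Kodaira's upper-semicontinuity theorem gives $h^k_{\overline\partial_t}(X_t)\le h^k_{\overline\partial}(X)$ for $|t|$ small.

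The heart of the argument is the reverse inequality, and here the structure of Proposition \ref{cohoinj} dictates an induction: its conclusion (injectivity of Map \ref{cohominj}, hence $h^k_{\overline\partial_t}(X_t)\ge h^k_{\overline\partial}(X)$) requires as a hypothesis that $h^{k-1}_{\overline\partial_t}(X_t)$ already be independent of $t$. I would therefore take as base case $k=-n$: since $X\in\mathbb{S}^{-n+1}$, Proposition \ref{case-n} gives directly that $h^{-n}_{\overline\partial_t}(X_t)$ is independent of $t$. For the inductive step, assuming $h^{k-1}_{\overline\partial_t}(X_t)$ is independent of $t$, Proposition \ref{cohoinj} applies (using $X\in\mathbb{B}^{k-1}\cap\mathbb{S}^{k+1}$) to yield injectivity of the linear map $\sigma_{00}\mapsto\mathcal{E}(\sigma_t)$, whence $h^k_{\overline\partial}(X)\le h^k_{\overline\partial_t}(X_t)$. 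Combining this with the semicontinuity bound forces the equality $h^k_{\overline\partial_t}(X_t)=h^k_{\overline\partial}(X)$, a quantity manifestly independent of $t$, which closes the induction and establishes the corollary for all $-n\le k\le n$.

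The main obstacle is conceptual rather than computational: recognizing that Proposition \ref{cohoinj} cannot be invoked directly, because its hypothesis that $h^{k-1}_{\overline\partial_t}(X_t)$ be independent of $t$ is itself part of the desired conclusion, so the statement must be bootstrapped degree by degree starting from the base case supplied by Proposition \ref{case-n}. The only points requiring care are the verification that the $\partial\overline\partial$-lemma supplies all of $\mathbb{B}^{k-1}$ and $\mathbb{S}^{k+1}$ uniformly in $k$ (including the vacuous top endpoint $\mathbb{S}^{n+1}$), and the bookkeeping that makes the two one-sided inequalities meet.
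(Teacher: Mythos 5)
Your proof is correct and follows essentially the same route as the paper: induction on $k$ starting from the base case $k=-n$ supplied by Proposition \ref{case-n}, with the inductive step combining the injectivity from Proposition \ref{cohoinj} and Kodaira's upper-semicontinuity. Your extra care in checking that the $\partial\overline\partial$-lemma also yields $X\in\mathbb{S}^{k+1}$ (via $\mathbb{B}^k\subset\mathbb{S}^k$ and the vacuity of $\mathbb{S}^{n+1}$) fills a detail the paper's own proof leaves implicit.
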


\begin{proof}
Since $X$ satisfies $\partial\overline\partial$-lemma, by Lemma \ref{dd}, we have that $X\in \mathbb{B}^k $ for any $-n\le k \le n$.
We prove $h_{\overline\partial_t}^k(X_t)$ is independent of $t$ by induction on $t$.
By Proposition \ref{case-n}, we know that $h_{\overline\partial_t}^{-n}(X_t)$ is independent of $t$.
By Theorem \ref{construction}, we give a map from $H_{\overline\partial }^k (X)   \to   H_{\overline\partial_t }^k (X_t)$.
If $h_{\overline\partial_t}^{k-1}(X_t)$ is independent of $t$, by Proposition \ref{cohoinj}, we know that
this map  is injective.
Combine  the fact that $h^{k}_{\overline\partial_t}(X_t)$ is upper-semicontinuous in $t$, we can get the result that $h_{\overline\partial_t}^{k}(X_t)$ is independent of $t$.

\end{proof}

\bigskip

\noindent Kang Wei\\
Center of Mathematical Sciences, \\Zhejiang University, \\Hangzhou, Zhejiang 310027,\\
China.\\
E-mail: kangkangspbr@163.com, kang\_wei5@hotmail.com.


\begin{thebibliography}{99}






\bibitem {An02} D. Angella, \textsl{On the Bott-Chern and Aeppli cohomology
,} arXiv:1507.07112.



\bibitem {An04} D. Angella, A. Tomassini, \textsl{On Bott-Chern cohomology and formality
,}   arXiv:1411.6037.


\bibitem {An05} D. Angella, S. Calamai, A. Latorre, \textsl{On cohomological decomposition of generalized-complex structures
,}   arXiv:1406.210.




\bibitem {Ba} D.~Baraglia,
\textsl{Variation of Hodge structure for generalized complex manifolds,}
Differential geometry and its applications ~vol. ~36,(2014£©. 98-133.


\bibitem {Cl} Clemens, \textsl{Geometry of formal Kuranishi theory,} Advances in Mathematics, 198(2005), 311-365.

\bibitem {Go2} R. Goto, \textsl{Deformation of generalized complex and generalized K\"ahler structures,} (2007) aiXiv:0705.2495v3.


\bibitem {Go} R. Goto, \textsl{On deformation of generalized Calabi-Yau, hyperK\"{a}hler,$G_2$ and $Spin(7)$ structures I,} (2005) aiXiv:math/0512211v1.




\bibitem {Gu1} M. Gualtieri, \textsl{Generalized complex geometry,} D. Phil. thesis, Oxford University, (2003) arXiv:math.DG/0401221.

\bibitem {Gu2} M. Gualtieri, \textsl{Generalized geometry and the Hodge decomposition,} (2004) arXiv:math.DG/0409093v1.

\bibitem {FG} F. Guan, \textsl{A proof of generalized K\"{a}hler identities(in Chinese),} (2008).


\bibitem {Hi} N. Hitchin, \textsl{Gerneralized Calabi-Yau manifolds,} Q.J. Math. 54(3) (2003) 281-308.

\bibitem {Ko1} K. Kodaira, \textsl{Complex manifolds and deformation of complex structure,} Springer-Verlag (1981).

\bibitem {Ko3} K. Kodaira, D. Spencer, \textsl{On Deformations of Complex Analytic Structures, III. Stability Theorems for Complex Structures,} Ann. of Math.
(2)71, 1960, 43-76.

\bibitem {Li} Y. Li, \textsl{On deformation of generalized complex structures:the generalized Calabi-Yau case,}(2005). arXiv:hep-th/0508030v2.

\bibitem {Ra} K. Liu, S. Rao, and X. Yang, \textsl{Quasi-isometry and deformations of Calabi¨CYau manifolds,}Inventiones Mathematicae, 199(2015) no. 2, 423-453.

\bibitem {Ra2} K. Liu and S. Rao, \textsl{Remarks on the Cartan formula and its applications,} Asian Journal of Mathematics, (2012) 157-170, arXiv:1104.1240v1.

\bibitem {RaoWanZhao01} K. Liu, S. Rao, X. Wan,  \textsl{Geometry of logarithmic forms and deformations of complex structures,} arXiv: 1708.00097v1.

\bibitem {Liu02} K. Liu, X. Sun, S.T. Yau, \textsl{Recent development on the geometry of the Teichm\"uller and moduli spaces of the Riemanm surfaces ,} Surveys in differential geometry. vol. XIV. Geometry of Riemann surfaces and their moduli spaces, 221-259, Surveys in differential geometry, 14. Int. Press, Somerville, MA, 2009.





\bibitem {LSY} K. Liu, X. Sun, S.T. Yau, \textsl{Recent development on the geometry of the Teich\"uller and moduli spaces of  the Riemann surfaces,} Surveys in differential geometry, vol. XIV. Geometry of Riemann surfaces and their moduli spaces , 221-259, (2009).



\bibitem {Ko2} J. Morrow and K. Kodaira, \textsl{Complex manifolds,} Holt, Rinehart and Winston, Inc., New York-Montreal, Que.-London,(1971).


\bibitem {RaoWanZhao02} S. Rao, X. Wan and Q. Zhao, \textsl{Power series proofs for local stabilities of K\"ahler and balanced
structures with mild $\partial\bar\partial$-lemma,} arXiv: 1609.05637v1.


\bibitem {RaoWanZhao03} S. Rao, Q. Zhao, \textsl{Several special complex structures and their deformation properties,} June 2015, arXiv: 1604.05396v1 to appear in Journal of Geometric Analysis.


\bibitem {Sc} M. Schweitzer, \textsl{Autour de la cohomologie de Bott-Chern
,} arXiv:0709.3528v1.

\bibitem {Se} P. $\check{S}$evera and A. Weinstein, \textsl{Poisson geometry with a 3-form background,} Prog. Theor. Phys. Suppl. 144, (2001) 145-154.



\bibitem {Sun01} X. Sun, \textsl{Deformation of canonical metrics I,} Asian J. Math. 16 (1)(2012) 141-155.

\bibitem {Sun02} X. Sun, S.T. Yau, \textsl{Deformation of K\"ahler-Einstein metrics,} in: Surveys in geometry analysis and relativity, in: Adv. Lect. Math. (ALM), vol. 20, Int. Press, Somerville, MA, 2011, 467-489.





\bibitem {Ti} G. Tian, \textsl{Smoothness of the universal deformation space of compact Calabi-Yau manifolds
 and its Petersson-Weil metric,} Mathematical aspects of string theory(San Diego, California,
1986). Advanced Series in Mathematical Physics, vol. 1, 629-646. World Scientific Publishing, Singapore(1987).

\bibitem {To} A. Todorov, \textsl{The Weil-Petersson geometry of the moduli space of $\mathbb{SU}$(n$\ge$3)(Calabi-Yau) manifolds I. }
Commun. Math. Phys. 126(2), 325-346(1989).

\bibitem {KW} K. Wei, \textsl{Global canonical family of deformations of compact $H$-twisted Generalized Calabi-Yau manifolds(in Chinese),} Acta Mathematica sinica, Chinese series, (2014) vol. 57(5).

\bibitem {KW02} K. Wei, \textsl{Some Results of Deformations on Compact $H$-twisted Generalized Calabi-Yau Manifolds,} Pure and applied mathematics quarterly, (2015) vol. 11(1). 131-169.



\bibitem {Zh} Q. Zhao and S. Rao, \textsl{Applications of deformation formula
of holomorphic one-forms,} Pacific Journal of Mathematics Vol. 266,
No. 1, (2013)221-256..

\bibitem {RaoZhao04} Q. Zhao, S. Rao,  \textsl{Extension formulas and deformation invariance of Hodge numbers,} C. R. Math. Acad. Sci. Paris, vol. 353 , Issue 11 , Nov. 2015, 979-984.


\end{thebibliography}
\end{document}